\newcommand{\Z}{\mathbb{Z}}
\newcommand{\R}{\mathbb{R}}
\newcommand{\N}{\mathbb{N}}
\newcommand{\T}{\mathbb{T}}
\newcommand{\vertices}{\operatorname{vert}}
\newcommand{\length}{\operatorname{length}}
\newtheorem{theorem}{Theorem}[section]
\newtheorem{proposition}[theorem]{Proposition}
\newtheorem{lemma}[theorem]{Lemma}
\newtheorem{corollary}[theorem]{Corollary}
\newtheorem{remark}[theorem]{Remark}
\theoremstyle{definition}
\newtheorem{definition}[theorem]{Definition}
\newcommand{\vol}{\operatorname{vol}} 
\newcommand{\Vol}{\operatorname{Vol}} 
\newcommand{\conv}{\operatorname{conv}}
\title{Classification of empty lattice $4$-simplices of width larger than two}
\author{Óscar Iglesias-Valiño \and Francisco Santos}
\address[O.~Iglesias-Valiño and F.~Santos]
{Department of Mathematics, Statistics and Computer Science. University of Cantabria, SPAIN\\
oscar.iglesias@unican.es, francisco.santos@unican.es}
\thanks{Supported by grants MTM2014-54207-P (both authors) and BES-2015-073128 (O.Iglesias) of the Spanish Ministry of Economy and Competitiveness. F.~Santos is also supported by an Einstein Visiting Professorship of the Einstein Foundation Berlin}
\begin{document}

\subjclass[2010]{Primary 52B20; Secondary 11H06.}
\keywords{Lattice polytopes, empty, classification, simplices, dimension 4, lattice width, maximum volume, enumeration.}

\begin{abstract}

A lattice $d$-simplex is the convex hull of $d+1$ affinely independent integer points in ${\mathbb R}^d$. It is called \emph{empty} if it contains no lattice point apart of its $d+1$ vertices. The classification of empty $3$-simplices is known since 1964 (White), based on the fact that they all have width one. But for dimension $4$ no complete classification is known.

Haase and Ziegler (2000) enumerated all empty $4$-simplices up to determinant 1000 and based on their results conjectured that after determinant $179$ all empty $4$-simplices have width one or two. We prove this conjecture as follows:

- We show that no empty $4$-simplex of width three or more can have determinant greater than 5058, by combining the recent classification of hollow 3-polytopes (Averkov, Kr\"umpelmann and Weltge, 2017) with general methods from the geometry of numbers.

- We continue the computations of Haase and Ziegler up to determinant 7600, and find that no new $4$-simplices of width larger than two arise. 

In particular, we give the whole list of empty $4$-simplices of width larger than two, which is as computed by Haase and Ziegler: There is a single empty $4$-simplex of width four (of determinant 101), and 178 empty $4$-simplices of width three, with determinants ranging from 41 to 179. 
\end{abstract}

\maketitle

\section{Introduction}

A \emph{lattice $d$-simplex} is the convex hull of $d+1$ affinely independent integer points in ${\mathbb R}^d$. It is called \emph{empty} if it contains no lattice point apart of the $d+1$ vertices. Empty simplices are the fundamental building blocks in the theory of lattice polytopes, in the sense that every lattice polytope $P$ (i.e., polytope with integer vertices) can be triangulated into empty simplices. 
In particular, it is very useful to have classifications or, at least, structural results, of all empty simplices in a given dimension. Classification will always be meant modulo unimodular equivalence: two lattice $d$-polytopes $P_1, P_2\subset \R^d$ are said  \emph{unimodularly equivalent} if there is an affine integer isomorphism between them; that is, a map $f:\R^d\to\R^d$ with integer coefficients and determinant $1$ such that $P_2=f(P_1)$
In dimension two every empty triangle is \emph{unimodular}; that is, its vertices form an affine basis for the lattice (this statement is equivalent to Pick's Theorem~\cite{BeckRobins}). In particular, all empty triangles are  \emph{unimodularly equivalent}. In dimension three this is 
no longer true. Yet, the classification of empty $3$-simplices is relatively simple:

\begin{theorem}[White 1964~\cite{White}]
\label{thm:white}
Every empty tetrahedron of determinant $q$ is unimodularly equivalent to 
\[
T(p,q):= \conv \{ (0,0,0), (1,0,0), (0,0,1), (p,q,1) \},
\]
for some $p\in\Z$ with $\gcd(p,q)=1$. Moreover, $T(p,q)$ is $\Z$-equivalent to $T(p',q)$ if and only if $p'=\pm p^{\pm 1} \pmod q$.
\end{theorem}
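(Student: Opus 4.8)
The plan is to treat the two assertions separately, using that the determinant $q$ (the normalized volume $3!\,\vol(T)$, equivalently the index $[\Z^3:\Lambda]$ of the lattice $\Lambda$ generated by the edge vectors emanating from one vertex) is a unimodular invariant. Thus equivalences can only occur among simplices of the same $q$, and it suffices to prove (a) that every empty tetrahedron of determinant $q$ is equivalent to some $T(p,q)$ with $\gcd(p,q)=1$, and (b) to decide when two such simplices are equivalent.

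For (a) I would first exploit that every face of an empty simplex is itself empty: any lattice point lying on a proper face would be a non-vertex lattice point of $T$. In particular each triangular facet is an empty lattice triangle, hence unimodular (the two-dimensional case recalled above). Mapping one facet to $\conv\{(0,0,0),(1,0,0),(0,1,0)\}$ brings $T$ into the normal form $\conv\{(0,0,0),(1,0,0),(0,1,0),(s,t,q)\}$, and composing with shears $(x,y,z)\mapsto(x+az,\,y+bz,\,z)$ lets me reduce $s,t$ modulo $q$. Unimodularity of the three remaining facets then forces $\gcd(s,q)=\gcd(t,q)=\gcd(s+t-1,q)=1$.

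The heart of the matter, and the step I expect to be the main obstacle, is to show that emptiness forces $T$ to have width one, equivalently that $T$ admits a primitive linear functional taking only two consecutive values on the four vertices; a direct check shows this amounts to a congruence condition on $s,t$ modulo $q$ (e.g.\ $s\equiv\pm1$, $t\equiv\pm1$, or $s+t\equiv0$). I would approach this by slicing: for $0<k<q$ the horizontal section $\{z=k\}\cap T$ is the triangle obtained by scaling $\conv\{(0,0,0),(1,0,0),(0,1,0)\}$ by $1-k/q$ and translating by $\tfrac{k}{q}(s,t)$, and emptiness says that none of these $q-1$ sections contains a lattice point. A three-distance / continued-fraction analysis of the fractional parts of $ks/q$ and $kt/q$ then has to produce the required congruence. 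Once width one is established, a final unimodular map (reorienting so that the two occupied lattice planes become $z=0,1$ and the two short edges sit in standard position) brings $T$ to exactly $T(p,q)$; conversely one checks directly that $T(p,q)$ is empty, since a lattice point forces $z\in\{0,1\}$ and then $q\mid kp$ with $\gcd(p,q)=1$ leaves only the vertices.

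For (b) I would use the width-one structure. Any unimodular equivalence $T(p,q)\to T(p',q)$ sends vertices to vertices and width-one directions to width-one directions; for all but finitely many small $q$ this direction is unique, so the equivalence must carry the pair of occupied lattice planes to the pair of occupied lattice planes (the remaining small cases being checked by hand). Hence it induces a combinatorial symmetry that either preserves or swaps the two layers and permutes the two vertices within each. I would realize the generators explicitly: a layer-preserving reflection gives $p'\equiv-p$, while the layer swap $z\mapsto1-z$ interchanges the roles of the two short edges and, after renormalizing, inverts the parameter, giving $p'\equiv p^{-1}\pmod q$. Composing these yields all of $\{\,\pm p^{\pm1}\,\}$, which proves the ``if'' direction by explicit maps and the ``only if'' direction by the fact that the listed symmetries exhaust the combinatorial possibilities. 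The arithmetic bookkeeping in the layer swap, namely verifying that integrality of the renormalizing map is exactly the relation $p\,p^{-1}\equiv1$, is the one computation I would carry out in full.
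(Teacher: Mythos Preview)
The paper does not contain a proof of this statement. Theorem~\ref{thm:white} is quoted as a classical result with a citation to White~\cite{White}, and the only remark the paper makes about its proof is that ``a key step in the proof of this theorem is the fact that all empty $3$-simplices have lattice width equal to one''. There is therefore no proof in the paper to compare your proposal against.

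That said, your outline follows the standard route and you have correctly identified where the real content lies: the reduction to a normal form via a unimodular facet is routine, and the heart of the matter is precisely the width-one statement. Your sketch of that step (``a three-distance / continued-fraction analysis of the fractional parts of $ks/q$ and $kt/q$ then has to produce the required congruence'') is not yet a proof; this is the part that White, and later Reeve and others, actually work for, and it does not fall out of a one-line argument. If you intend to supply a self-contained proof you will need to fill this in substantially. For part (b), your claim that the width-one direction is unique ``for all but finitely many small $q$'' is not quite right as stated (and in any case you would need to handle the exceptional cases), but the underlying idea---that any equivalence must respect the two-layer structure and hence acts on the parameter $p$ via the dihedral group generated by $p\mapsto -p$ and $p\mapsto p^{-1}$---is the correct one.
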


A key step in the proof of this theorem is the fact that all empty $3$-simplices have \emph{lattice width} equal to one according to the following definition: For each linear or affine functional $f:\R^d\to \R$ and any convex body $P\subset \R^d$ the width of $P$ with respect to $f$ is the difference between the maximum and minimum values of $f$ on $P$; that is, the length of the interval $f(P)$. The \emph{lattice width} of $P$ is the minimum width among all non-constant functionals with integer coefficients. In the case of empty $3$-simplices, once we now they have width one there is no loss of generality in assuming that two vertices lie in the plane $\{(x,y,z):z=0\}$ and another two in $\{(x,y,z):z=1\}$, which is essentially what White's classification says.

As sample applications of classifying  empty simplices, let us mention:
\begin{itemize}
\item It is a classical result of Knudsen et al.~\cite{Knudsen} that for every lattice polytope $P$ there is a constant $k\in \N$ such that the $k$-th dilation of $P$ has a unimodular triangulation; but it is open whether a uniform constant depending only on the dimension and not the particular polytope exists. In dimension three, heavily relying on the classification of empty simplices, 
Kantor and Sarkaria~\cite{KantorSarkaria} proved that $k=4$ does the job, and Santos and Ziegler extended this to any $k\in \N\setminus  \{41,2,3,5\}$. It is unknown whether a universal constant $k(d)$ valid for all polytopes of a given dimension $d$ exists, starting in dimension four.

\item In algebraic geometry empty simplices correspond to terminal quotient singularities. Their classification in dimension three is sometimes called the \emph{terminal lemma} (see some history and references in~\cite{MMM88}). 
\end{itemize}
In particular, quite some effort  has been done towards the classification of $4$-dimensional ones~\cite{BBBK09,BlancoHaaseHoffmanSantos,Bober09,Borisov-class,HZ00,MMM88,Sankaran90}.

So far the main known facts about empty $4$-simplices are:

\begin{theorem}[Properties of empty $4$-simplices,~\cite{BBBK09,BlancoHaaseHoffmanSantos,HZ00,MMM88}]
\label{thm:intro_previous_results}
\ 
\begin{enumerate}
\item There are infinitely many lattice empty $4$-simplices of width $1$ (e.g., cones over empty tetrahedra) and of width $2$~\cite{HZ00,MMM88}. 
\item Among the simplices of determinant $\le 1600$ there are $178$ of width three, one of width four, and none of larger width, and their maximum determinant is $179$. The computation up to determinant $1000$ was done by Haase and Ziegler~\cite{HZ00} and the extension up to $1600$ by Perriello~\cite{Perr08}.
\item The total amount of empty lattice $4$-simplices of width greater than 2 is finite~\cite{BlancoHaaseHoffmanSantos}. 
\item Every empty $4$-simplex is \emph{cyclic}~\cite{BBBK09}. The same is not true in dimension five. 
\end{enumerate}
\end{theorem}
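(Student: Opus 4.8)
The four items are of quite different natures, so the plan is to treat them one at a time, unified by the standard encoding of a lattice $d$-simplex $S=\conv\{v_0,\dots,v_d\}$ through its quotient group $Q(S):=\Z^d/\Lambda$, where $\Lambda$ is the sublattice spanned by $\{v_i-v_0\}$. This group is finite of order equal to the determinant (normalized volume) $q$ of $S$, and by definition $S$ is \emph{cyclic} exactly when $Q(S)\cong\Z/q$. In the cyclic case I would record $S$ by the images $a_0,\dots,a_d\in\Z/q$ of its vertices under a chosen isomorphism; these satisfy a single linear relation, and the cosets $x\in\Z/q$ that correspond to lattice points of $S$ can be read off from the fractional parts $\{x a_i/q\}$. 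Emptiness then becomes the purely arithmetic condition that only the cosets representing the vertices arise.

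For the width-one half of item (1) the construction is direct: embed any empty tetrahedron $T$ of determinant $q$ as $T\times\{0\}\subset\R^3\times\R$ and adjoin the apex $(0,0,0,1)$. The resulting $4$-simplex $S$ has integer points only at heights $x_4\in\{0,1\}$, where they are the vertices of $T$ and the apex respectively, so $S$ is empty of determinant $q$ and has width one with respect to $x_4$ (hence, being full-dimensional, lattice width exactly one). Since Theorem~\ref{thm:white} supplies inequivalent $T$ of every determinant, this yields infinitely many such $S$. For the width-two half I would exhibit an explicit one-parameter family in the cyclic encoding and verify, by direct computation, both the arithmetic emptiness condition and that the minimum of $\max_S f-\min_S f$ over primitive integer functionals $f$ equals exactly two.

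Item (4), that every empty $4$-simplex is cyclic, is the structural statement: I would enumerate the possible noncyclic abelian groups $Q(S)$ of rank at most $4$ and show, via the fractional-part emptiness condition, that any such $Q(S)$ forces a nonvertex lattice point; the known dimension-five counterexample indicates the argument must genuinely use $d\le 4$ and cannot be purely formal. Granting cyclicity, item (2) becomes a finite computation: for each $q\le 1600$ I would enumerate the admissible vectors $(a_0,\dots,a_4)\in(\Z/q)^5$ modulo coordinate permutations and multiplication by units of $\Z/q$, discard those failing the emptiness test, and compute the lattice width of each survivor by minimizing $\max_S f-\min_S f$ over primitive $f$, which reduces to enumerating short vectors of the dual lattice. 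Filtering for width $\ge 3$ produces the stated counts. The practical obstacle is the growth of the candidate set with $q$, which must be tamed by aggressive symmetry reduction and early pruning.

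Item (3), finiteness of empty $4$-simplices of width at least three, is the deepest part and I expect it to be the main obstacle. A bound on the width alone follows from the flatness theorem, since any hollow --- hence any empty --- convex body in a fixed dimension has lattice width bounded by an absolute constant; but finiteness of the count additionally demands an absolute upper bound on the determinant $q$. The plan is to argue through the geometry of numbers: were $q$ large while the width stayed $\ge 3$, the simplex would be uniformly thick in every lattice direction, and a successive-minima and covering argument would then force an interior lattice point, contradicting emptiness. Turning this heuristic into an explicit determinant bound is the technical heart, because one must control all lattice directions at once rather than only the direction realizing the minimal width.
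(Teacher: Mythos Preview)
This theorem is not proved in the paper at all: it is a summary of previously known results, each item carrying a citation to the literature (\cite{HZ00,MMM88} for (1), \cite{HZ00,Perr08} for (2), \cite{BlancoHaaseHoffmanSantos} for (3), \cite{BBBK09} for (4)), and no proof or proof sketch follows the statement. So there is no ``paper's own proof'' to compare against. What the paper actually contributes is a strengthening of (2) and (3): Theorem~\ref{thm:bound_intro} gives the explicit determinant bound $5058$ (making (3) effective), and Theorem~\ref{thm:eunumeration_intro} extends the enumeration of (2) up to determinant $7600$. Those are proved in Sections~\ref{sec:3hollow}--\ref{sec:enumeration}.

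That said, your sketches for (1) and (2) are accurate descriptions of how those results were originally obtained. Your outline for (4) is too optimistic: the argument in \cite{BBBK09} does not proceed by a bare case analysis on abelian group types plus the fractional-part condition, and such an approach by itself does not obviously distinguish dimension four from five. For (3) your description is an honest heuristic rather than a proof, and you correctly flag the gap; note that the paper's own effective proof of (3) does \emph{not} follow the line you suggest (controlling all lattice directions simultaneously via successive minima alone). Instead it splits into two cases according to whether $P$ projects to a hollow $3$-polytope, invokes the Averkov--Kr\"umpelmann--Weltge classification of wide hollow $3$-polytopes in the first case, and in the second case bounds the rational diameter $\lambda_1(P-P)^{-1}$ by $42$ (via a Sylvester-sequence argument, Corollary~\ref{coro:onepoint}) and combines this with a volume bound for wide hollow $3$-bodies (Theorem~\ref{thm:Santos_I}). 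Your proposed route, even if it could be made to work, would likely yield a much worse constant.
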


%Of course the counting in parts (1) and (2) is modulo unimodular equivalence. 
The \emph{determinant} or \emph{normalized volume} of the $d$-simplex with vertices $v_0,\dots,v_d$ is
\[
\det\left(\begin{matrix} v_0&\dots&v_d \\ 1&\dots&1\end{matrix}\right).
\]
For a general polytope $P$, the normalized volume is just the Euclidean volume normalized to the lattice, so that unimodular simplices have volume one and every lattice polytope has integer volume. We distinguish between Euclidean and normalized volumes using $\vol(P)$ for the former and $\Vol(P)= d!\vol(P) $ for the latter. 
Section~\ref{sec:3hollow} is written mostly in terms of Euclidean volumes, since it relies in convex geometry results where that is the standard practice, but in Sections~\ref{sec:4hollow} and~\ref{sec:enumeration} we switch to $\Vol(P)$. For a (perhaps not empty) lattice simplex $P$, its determinant equals 
the order of the quotient group of the lattice $\Z^4$ by the sublattice generated by vertices of $P$. $P$ is called \emph{cyclic} if this quotient group is cyclic. 

Based on the exhaustive enumeration described in part (2) of Theorem~\ref{thm:intro_previous_results}, Haase and Ziegler~\cite{HZ00} conjectured that no further empty $4$-simplices of width larger than two existed. Our main result in this paper is the proof of that conjecture. This in particular provides an independent and more explicit proof of part (3) of Theorem~\ref{thm:intro_previous_results}:

\begin{theorem}
\label{thm:main}
The full list of empty $4$-simplices of width larger than two is as follows: there are 178 of width three, with determinants ranging from 
41 to 179, and a single one of width four, with determinant 101.
\end{theorem}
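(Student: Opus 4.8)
The proof is a finite computation whose feasibility rests on an a priori bound on the determinant, so the plan splits into an upper-bound step and an enumeration step, matching the two-pronged strategy announced in the abstract.

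\medskip

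\noindent\textbf{Step 1: An explicit determinant bound.} The plan is to prove that any empty $4$-simplex $P$ of width at least three has bounded determinant. The engine here is the geometry of numbers applied to the lattice width: if $P$ has large width in \emph{every} direction, then $P$ must be ``fat'', which forces its volume to be large; conversely, for an \emph{empty} simplex the volume (determinant) cannot be arbitrarily large relative to its width, because emptiness limits how much lattice-point-free space a simplex can enclose. Concretely, I would fix a width-minimizing functional and slice $P$ by the integer hyperplanes $\{f=k\}$. Each nonempty slice is a hollow (indeed empty-interior) section, and since the width is at least three there are at least four such slices. The key leverage is that the recent classification of hollow $3$-polytopes of Averkov, Kr\"umpelmann and Weltge bounds the normalized volume of each slice by a universal constant, and summing these contributions (together with a convexity/interlacing argument controlling how the slice volumes can grow) yields an explicit upper bound on $\Vol(P)$. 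The arithmetic should produce the stated threshold $5058$.

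\medskip

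\noindent\textbf{Step 2: Exhaustive enumeration up to the bound.} Once Step~1 guarantees that every empty $4$-simplex of width $\ge 3$ has determinant at most $5058$, it remains to enumerate all empty $4$-simplices up to some determinant comfortably exceeding this bound and record which have width $\ge 3$. I would continue the Haase--Ziegler enumeration up to determinant $7600$ (the extra margin both provides a safety buffer and lets us re-derive the finiteness of part (3) of Theorem~\ref{thm:intro_previous_results} independently). The enumeration itself exploits that every empty $4$-simplex is cyclic (part (4) of Theorem~\ref{thm:intro_previous_results}): a cyclic simplex of determinant $q$ is encoded, up to unimodular equivalence, by a vector of ``weights'' $(w_0,\dots,w_4)$ in $(\Z/q\Z)^5$ summing to zero, modulo the action of multiplication by units and permutation of coordinates. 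For each $q\le 7600$ I would generate these normalized weight vectors, test emptiness (equivalently, that no nonzero lattice point lies in the half-open parallelepiped other than the obvious ones), and for the empty ones compute the lattice width by minimizing $\max - \min$ of a linear functional over the five vertices. Comparing the output against the known list of Haase and Ziegler for $q\le 179$ confirms correctness, and verifying that no new width-$\ge 3$ example appears in the range $180\le q\le 7600$ completes the proof, yielding exactly the $178$ width-three simplices and the single width-four simplex of determinant $101$.

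\medskip

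\noindent\textbf{Main obstacle.} The conceptually hard part is Step~1: extracting an explicit, provably correct volume bound from the hollow $3$-polytope classification. The difficulty is that bounding each individual slice is not enough — one must control the \emph{global} shape, since a priori a simplex could have many thin slices whose volumes sum to a large total. The argument must therefore combine the per-slice volume bounds with the convex-geometric fact that the slice volumes of a simplex vary in a controlled (quasi-concave) manner along the width direction, and carefully track the relationship between the $3$-dimensional lattice induced on each slice and the ambient lattice $\Z^4$. Getting the constant small enough (so that the enumeration in Step~2 remains computationally feasible) is the crux; a weak bound would render the exhaustive search intractable. The enumeration of Step~2, by contrast, is a large but routine computation whose main challenge is organizational efficiency rather than mathematical depth.
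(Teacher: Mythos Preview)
Your Step~2 is essentially what the paper does, and your identification of the cyclic encoding via quintuples in $(\Z/q\Z)^5$ is correct.

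Step~1, however, has a genuine gap: the slicing strategy does not work, and the paper takes a quite different route. If you slice $P$ by the integer hyperplanes $\{f=k\}$, the resulting sections are hollow convex $3$-bodies, but they are \emph{not} lattice polytopes (their vertices are rational intersections of edges of $P$ with the hyperplane), so the Averkov--Kr\"umpelmann--Weltge classification of hollow lattice $3$-polytopes does not apply to them. Worse, a hollow convex $3$-body has bounded volume only when its lattice width exceeds $1+2/\sqrt3\approx 2.155$; below that threshold the volume is unbounded (prisms over a hollow triangle of that width). There is no reason the slices of $P$ should individually have width near~$3$---near the ends of the simplex they are arbitrarily thin---so no per-slice bound is available, and the quasi-concavity of slice volumes does not rescue the argument.

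What the paper actually does is \emph{project} $P$ to a $3$-polytope $Q$ (projections of lattice points are lattice points, and $Q$ inherits width $\ge 3$), then split into two cases. If some projection $Q$ is hollow, the AKW list of five hollow lattice $3$-polytopes of width $\ge 3$ applies and gives $\Vol(P)\le 27$ via a Radon-point argument. The hard case is when \emph{no} projection is hollow: here the paper projects along the direction realizing the rational diameter $\lambda_1(P-P)^{-1}$, proves $\lambda_1(P-P)\ge 1/42$ using a Sylvester-sequence bound on how far interior lattice points in a non-hollow simplex can sit from a facet, and then bounds $\Vol(Q)$ via a volume estimate (Theorem~\ref{thm:Santos_I}/Corollary~\ref{coro:Santos_I}) for hollow $3$-bodies of width $>2.155$ applied to a suitable shrinking of $Q$. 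Combining these yields $\Vol(P)\le 5058$. The key missing ingredients in your proposal are this hollow/non-hollow projection dichotomy and the rational-diameter bound; the AKW classification handles only the easy case.
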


All empty $4$-simplices in this list (and presumably all $4$-dimensional empty simplices, although we do not have a proof of that) have a unimodular facet\footnote{Haase and Ziegler~\cite{HZ00} cite the diploma thesis~\cite{Wessels}  for a proof of this fact, but we have not been able to verify that proof. The closest result that we can certify is that, as a consequence of Lemma~\ref{lemma:coprime_facets}, in order for an empty $4$-simplex not to have any unimodular facet its determinant needs to be at least $2\cdot3\cdot5\cdot7\cdot11=2310$. We can also say that up to determinant $7600$ all empty $4$-simplices do have at least \emph{two} unimodular facets.}. This has the implication that they can be represented as
\[
\Delta(v):=\conv(e_1,e_2,e_3,e_4,v)
\]
for a certain $v\in \Z^4$. Table~\ref{table:full_list} gives the full list of empty $4$-simplices of width larger than two in this notation. Observe that the determinant of $\Delta(v)$ equals $\sum v_i-1$.

\begin{table}
\tiny
%\centerline{width 3}
\begin{tabular}{llll}
%!TEX root =Iglesias-santos-Arxiv-v4.tex

\begin{tabular}{lr}

\hline \footnotesize  $D=$ 41
& \footnotesize$( 4, 23, 25, -10)$ \\ %& \footnotesize$( 4, 23, 25, 31)$ \\ %& \footnotesize$( 4, 23, 25, 31)$ \\ %& \footnotesize$( 4, 23, 25, 31)$ \\ %& \footnotesize$( 4, 23, 25, 31)$ \\ %*

\hline \footnotesize $D=$ 43

& \footnotesize$( 3, 5, 11, 25)$ \\ %& \footnotesize$( 6, 25, 27, 29)$ \\ %& \footnotesize$( 8, 15, 26, 38)$ \\ %& \footnotesize$( 4, 23, 29, 31)$ \\ %& \footnotesize$( 3, 17, 31, 36)$ \\ %

& \footnotesize$( 4, 7, 9, 24)$ \\ %& \footnotesize$( 9, 11, 30, 37)$ \\ %& \footnotesize$(11, 15, 24, 37)$ \\ %& \footnotesize$( 4, 24, 26, 33)$ \\ %& \footnotesize$( 5, 7, 9, 23)$ \\ %

\hline \footnotesize $D=$ 44
& \footnotesize$( 3, 5, 13, 24)$ \\ %& \footnotesize$(13, 15, 25, 36)$ \\ %& \footnotesize$( 4, 9, 15, 17)$ \\ %& \footnotesize$( 3, 17, 32, 37)$ \\ %& \footnotesize$(0, 0, 0, 0, 0)$ \\ %

& \footnotesize$( 4, 7, 9, 25)$ \\ %& \footnotesize$(0, 0, 0, 0, 0)$ \\ %& \footnotesize$( 5, 9, 12, 19)$ \\ %& \footnotesize$( 5, 7, 9, 24)$ \\ %& \footnotesize$( 5, 19, 28, 37)$ \\ %

\hline \footnotesize $D=$ 47
& \footnotesize$( 3, 5, 13, 27)$ \\ %& \footnotesize$(14, 16, 27, 38)$ \\ %& \footnotesize$( 4, 19, 35, 37)$ \\ %& \footnotesize$( 7, 16, 29, 43)$ \\ %& \footnotesize$( 3, 7, 12, 26)$ \\ %

\hline \footnotesize $D=$ 48
& \footnotesize$( 3, 5, 13, 28)$ \\ %& \footnotesize$(0, 0, 0, 0, 0)$ \\ %& \footnotesize$( 4, 7, 9, 29)$ \\ %& \footnotesize$( 7, 20, 33, 37)$ \\ %& \footnotesize$( 7, 20, 33, 37)$ \\ %

\hline \footnotesize $D=$ 49
& \footnotesize$( 3, 5, 11, 31)$ \\ %& \footnotesize$( 6, 29, 31, 33)$ \\ %& \footnotesize$(10, 19, 33, 37)$ \\ %& \footnotesize$( 4, 9, 15, 22)$ \\ %& \footnotesize$( 3, 19, 36, 41)$ \\ %

& \footnotesize$( 4, 9, 11, 26)$ \\ %& \footnotesize$(10, 18, 34, 37)$ \\ %& \footnotesize$( 5, 8, 11, 26)$ \\ %& \footnotesize$( 9, 11, 13, 17)$ \\ %& \footnotesize$( 9, 17, 30, 43)$ \\ %

\hline \footnotesize $D=$ 50
& \footnotesize$( 4, 11, 17, 19)$ \\ %& \footnotesize$(0, 0, 0, 0, 0)$ \\ %& \footnotesize$( 3, 21, 36, 41)$ \\ %& \footnotesize$( 3, 17, 38, 43)$ \\ %& \footnotesize$( 7, 29, 31, 34)$ \\ %

\hline \footnotesize $D=$ 51
& \footnotesize$( 3, 5, 13, 31)$ \\ %& \footnotesize$(0, 0, 0, 0, 0)$ \\ %& \footnotesize$( 4, 28, 30, 41)$ \\ %& \footnotesize$( 4, 29, 31, 39)$ \\ %& \footnotesize$(13, 18, 28, 44)$ \\ %

& \footnotesize$( 3, 8, 13, 28)$ \\ %& \footnotesize$(0, 0, 0, 0, 0)$ \\ %& \footnotesize$( 6, 22, 32, 43)$ \\ %& \footnotesize$( 4, 19, 39, 41)$ \\ %& \footnotesize$( 5, 7, 9, 31)$ \\ %

\hline \footnotesize $D=$ 52

& \footnotesize$( 4, 9, 19, 21)$ \\ %& \footnotesize$(0, 0, 0, 0, 0)$ \\ %& \footnotesize$(15, 21, 29, 40)$ \\ %& \footnotesize$( 5, 8, 11, 29)$ \\ %& \footnotesize$( 5, 7, 9, 32)$ \\ %

\hline \footnotesize $D=$ 53

& \footnotesize$( 3, 5, 14, 32)$ \\ %& \footnotesize$( 7, 13, 16, 18)$ \\ %& \footnotesize$(10, 29, 32, 36)$ \\ %& \footnotesize$(11, 19, 28, 49)$ \\ %& \footnotesize$( 5, 28, 36, 38)$ \\ %

& \footnotesize$( 3, 5, 16, 30)$ \\ %& \footnotesize$(16, 18, 30, 43)$ \\ %& \footnotesize$(10, 18, 32, 47)$ \\ %& \footnotesize$( 3, 10, 18, 23)$ \\ %& \footnotesize$( 3, 23, 37, 44)$ \\ %

& \footnotesize$( 3, 8, 14, 29)$ \\ %& \footnotesize$( 8, 13, 15, 18)$ \\ %& \footnotesize$( 3, 20, 38, 46)$ \\ %& \footnotesize$( 7, 19, 32, 49)$ \\ %& \footnotesize$( 5, 11, 18, 20)$ \\ %

& \footnotesize$( 3, 19, 40, -8)$ \\ %& \footnotesize$(18, 22, 29, 38)$ \\ %& \footnotesize$( 6, 11, 14, 23)$ \\ %& \footnotesize$( 4, 30, 32, 41)$ \\ %& \footnotesize$( 5, 7, 9, 33)$ \\ %

& \footnotesize$( 3, 20, 39, -8)$ \\ %& \footnotesize$(11, 18, 38, 40)$ \\ %& \footnotesize$( 6, 8, 11, 29)$ \\ %& \footnotesize$( 4, 7, 9, 34)$ \\ %& \footnotesize$( 7, 29, 33, 38)$ \\ %

& \footnotesize$( 4, 9, 11, 30)$ \\ %& \footnotesize$(11, 19, 37, 40)$ \\ %& \footnotesize$( 6, 29, 32, 40)$ \\ %& \footnotesize$( 4, 29, 31, 43)$ \\ %& \footnotesize$( 5, 12, 14, 23)$ \\ %

\hline \footnotesize $D=$ 54

& \footnotesize$( 3, 5, 13, 34)$ \\ %& \footnotesize$(0, 0, 0, 0, 0)$ \\ %& \footnotesize$( 4, 11, 19, 21)$ \\ %& \footnotesize$(14, 25, 33, 37)$ \\ %& \footnotesize$(0, 0, 0, 0, 0)$ \\ %

\hline \footnotesize $D=$ 55

& \footnotesize$( 3, 5, 17, 31)$ \\ %& \footnotesize$( 8, 31, 35, 37)$ \\ %& \footnotesize$(0, 0, 0, 0, 0)$ \\ %& \footnotesize$(13, 16, 37, 45)$ \\ %& \footnotesize$( 3, 7, 16, 30)$ \\ %

& \footnotesize$( 3, 7, 12, 34)$ \\ %& \footnotesize$( 7, 16, 37, 51)$ \\ %& \footnotesize$( 3, 8, 14, 31)$ \\ %& \footnotesize$( 4, 23, 41, 43)$ \\ %& \footnotesize$( 8, 32, 34, 37)$ \\ %

 \hline \footnotesize $D=$ 56

& \footnotesize$( 3, 5, 17, 32)$ \\ %& \footnotesize$( 8, 13, 17, 19)$ \\ %& \footnotesize$(16, 19, 33, 45)$ \\ %& \footnotesize$( 3, 8, 13, 33)$ \\ %& \footnotesize$(0, 0, 0, 0, 0)$ \\ %

\hline \footnotesize $D=$ 57
& \footnotesize$( 3, 5, 13, 37)$ \\ %& \footnotesize$(0, 0, 0, 0, 0)$ \\ %& \footnotesize$( 4, 23, 43, 45)$ \\ %& \footnotesize$( 4, 22, 41, 48)$ \\ %& \footnotesize$( 3, 32, 37, 43)$ \\ %

\hline \footnotesize $D=$ 58
& \footnotesize$( 3, 5, 13, 38)$ \\ %& \footnotesize$(15, 26, 37, 39)$ \\ %& \footnotesize$( 4, 9, 11, 35)$ \\ %& \footnotesize$( 6, 9, 13, 31)$ \\ %& \footnotesize$(0, 0, 0, 0, 0)$ \\ %

& \footnotesize$(11, 32, 35, -19)$ \\ %& \footnotesize$( 7, 34, 37, 39)$ \\ %& \footnotesize$(0, 0, 0, 0, 0)$ \\ %& \footnotesize$( 3, 5, 14, 37)$ \\ %& \footnotesize$( 3, 11, 20, 25)$ \\ %, 

& \footnotesize$( 3, 31, 37, -12)$ \\ %& \footnotesize$( 4, 7, 9, 39)$ \\ %& \footnotesize$( 6, 13, 15, 25)$ \\ %& \footnotesize$( 7, 11, 16, 25)$ \\ %& \footnotesize$(0, 0, 0, 0, 0)$ \\ %

\hline \footnotesize $D=$ 59 %, 10, símplices

& \footnotesize$( 3, 5, 14, 38)$ \\ %& \footnotesize$( 7, 15, 18, 20)$ \\ %& \footnotesize$( 9, 12, 16, 23)$ \\ %& \footnotesize$( 4, 31, 38, 46)$ \\ %& \footnotesize$(14, 17, 40, 48)$ \\ %

& \footnotesize$( 3, 5, 19, 33)$ \\ %& \footnotesize$(18, 20, 33, 48)$ \\ %& \footnotesize$( 8, 12, 17, 23)$ \\ %& \footnotesize$(20, 28, 34, 37)$ \\ %& \footnotesize$( 3, 7, 16, 34)$ \\ %

& \footnotesize$( 3, 7, 12, 38)$ \\ %& \footnotesize$( 7, 20, 37, 55)$ \\ %& \footnotesize$( 3, 8, 17, 32)$ \\ %& \footnotesize$( 5, 24, 44, 46)$ \\ %& \footnotesize$( 9, 14, 17, 20)$ \\ %

& \footnotesize$( 3, 7, 13, 37)$ \\ %& \footnotesize$(20, 27, 35, 37)$ \\ %& \footnotesize$( 8, 15, 17, 20)$ \\ %& \footnotesize$( 4, 27, 38, 50)$ \\ %& \footnotesize$( 3, 8, 14, 35)$ \\ %

& \footnotesize$( 3, 11, 20, 26)$ \\ %& \footnotesize$(11, 13, 16, 20)$ \\ %& \footnotesize$( 3, 25, 43, 48)$ \\ %& \footnotesize$( 3, 26, 40, 50)$ \\ %& \footnotesize$(20, 25, 31, 43)$ \\ %

& \footnotesize$( 3, 32, 38, -13)$ \\ %& \footnotesize$( 7, 9, 20, 24)$ \\ %& \footnotesize$(17, 24, 32, 46)$ \\ %& \footnotesize$( 5, 14, 17, 24)$ \\ %& \footnotesize$( 7, 9, 12, 32)$ \\ %

& \footnotesize$( 4, 6, 15, 35)$ \\ %& \footnotesize$( 6, 11, 15, 28)$ \\ %& \footnotesize$( 4, 10, 19, 27)$ \\ %& \footnotesize$( 4, 35, 37, 43)$ \\ %& \footnotesize$( 8, 10, 15, 27)$ \\ %

& \footnotesize$( 4, 6, 17, 33)$ \\ %& \footnotesize$(15, 28, 36, 40)$ \\ %& \footnotesize$( 7, 10, 19, 24)$ \\ %& \footnotesize$( 5, 7, 17, 31)$ \\ %& \footnotesize$(12, 32, 34, 41)$ \\ %

& \footnotesize$( 4, 7, 15, 34)$ \\ %& \footnotesize$(11, 13, 15, 21)$ \\ %& \footnotesize$(12, 17, 40, 50)$ \\ %& \footnotesize$( 4, 31, 41, 43)$ \\ %& \footnotesize$( 5, 33, 36, 45)$ \\ %

& \footnotesize$( 4, 7, 17, 32)$ \\ %& \footnotesize$(13, 15, 40, 51)$ \\ %& \footnotesize$( 6, 17, 46, 50)$ \\ %& \footnotesize$( 7, 10, 12, 31)$ \\ %& \footnotesize$( 5, 9, 22, 24)$ \\ %

\hline \footnotesize $D=$ 60 %Mismo, símplice con, sus, 3, resultados

& \footnotesize$( 3, 8, 13, 37)$ \\ %& \footnotesize$(0, 0, 0, 0, 0)$ \\ %& \footnotesize$(0, 0, 0, 0, 0)$ \\ %& \footnotesize$( 4, 9, 11, 37)$ \\ %& \footnotesize$(11, 13, 16, 21)$ \\ %
%& \footnotesize$( 4, 9, 11, 37)$ \\ %& \footnotesize$(0, 0, 0, 0, 0)$ \\ %& \footnotesize$(0, 0, 0, 0, 0)$ \\ %& \footnotesize$(11, 13, 16, 21)$ \\ %& \footnotesize$( 3, 8, 13, 37)$ \\ %
%& \footnotesize$(11, 13, 16, 21)$ \\ %& \footnotesize$( 4, 9, 11, 37)$ \\ %& \footnotesize$(0, 0, 0, 0, 0)$ \\ %& \footnotesize$(0, 0, 0, 0, 0)$ \\ %& \footnotesize$(0, 0, 0, 0, 0)$ \\ %, 

\hline \footnotesize $D=$ 61

& \footnotesize$( 3, 5, 13, 41)$ \\ %& \footnotesize$(16, 27, 39, 41)$ \\ %& \footnotesize$( 4, 34, 36, 49)$ \\ %& \footnotesize$( 9, 25, 42, 47)$ \\ %& \footnotesize$( 3, 22, 46, 52)$ \\ %

& \footnotesize$( 3, 5, 17, 37)$ \\ %& \footnotesize$( 8, 35, 39, 41)$ \\ %& \footnotesize$(17, 21, 36, 49)$ \\ %& \footnotesize$( 5, 7, 18, 32)$ \\ %& \footnotesize$(18, 23, 33, 49)$ \\ %

& \footnotesize$( 3, 17, 49, -7)$ \\ %& \footnotesize$( 4, 35, 41, 43)$ \\ %& \footnotesize$( 4, 7, 18, 33)$ \\ %& \footnotesize$( 5, 35, 37, 46)$ \\ %& \footnotesize$( 7, 26, 44, 46)$ \\ %

& \footnotesize$( 3, 23, 45, -9)$ \\ %& \footnotesize$( 3, 33, 41, 46)$ \\ %& \footnotesize$( 6, 8, 11, 37)$ \\ %& \footnotesize$( 4, 19, 49, 51)$ \\ %& \footnotesize$( 5, 27, 41, 50)$ \\ %

& \footnotesize$( 3, 27, 41, -9)$ \\ %& \footnotesize$( 3, 27, 41, 52)$ \\ %& \footnotesize$( 3, 27, 41, 52)$ \\ %& \footnotesize$( 3, 27, 41, 52)$ \\ %& \footnotesize$( 3, 27, 41, 52)$ \\ %*

& \footnotesize$( 3, 34, 40, -15)$ \\ %& \footnotesize$( 5, 7, 9, 41)$ \\ %& \footnotesize$( 6, 9, 13, 34)$ \\ %& \footnotesize$( 8, 29, 35, 51)$ \\ %& \footnotesize$( 4, 23, 47, 49)$ \\ %

& \footnotesize$( 4, 33, 35, -10)$ \\ %& \footnotesize$( 7, 33, 37, 46)$ \\ %& \footnotesize$( 4, 35, 37, 47)$ \\ %& \footnotesize$( 7, 9, 13, 33)$ \\ %& \footnotesize$( 6, 34, 37, 46)$ \\ %

\hline \footnotesize $D=$ 62 %3, símplices

& \footnotesize$( 3, 5, 17, 38)$ \\ %& \footnotesize$( 8, 15, 19, 21)$ \\ %& \footnotesize$( 9, 25, 42, 49)$ \\ %& \footnotesize$( 7, 11, 16, 29)$ \\ %& \footnotesize$(0, 0, 0, 0, 0)$ \\ %

& \footnotesize$( 4, 7, 9, 43)$ \\ %& \footnotesize$(0, 0, 0, 0, 0)$ \\ %& \footnotesize$( 9, 26, 43, 47)$ \\ %& \footnotesize$( 7, 9, 13, 34)$ \\ %& \footnotesize$( 7, 10, 13, 33)$ \\ %

& \footnotesize$( 4, 11, 23, 25)$ \\ %& \footnotesize$(0, 0, 0, 0, 0)$ \\ %& \footnotesize$( 9, 17, 43, 56)$ \\ %& \footnotesize$( 7, 13, 16, 27)$ \\ %& \footnotesize$( 5, 7, 9, 42)$ \\ %

\hline \footnotesize $D=$ 63

& \footnotesize$( 3, 5, 19, 37)$ \\ %& \footnotesize$(0, 0, 0, 0, 0)$ \\ %& \footnotesize$(12, 34, 38, 43)$ \\ %& \footnotesize$( 8, 10, 13, 33)$ \\ %& \footnotesize$( 8, 22, 46, 51)$ \\ %

& \footnotesize$( 3, 8, 19, 34)$ \\ %& \footnotesize$(0, 0, 0, 0, 0)$ \\ %& \footnotesize$( 8, 37, 39, 43)$ \\ %& \footnotesize$(10, 33, 38, 46)$ \\ %& \footnotesize$( 5, 13, 22, 24)$ \\ %

\hline \footnotesize $D=$ 64

& \footnotesize$( 3, 5, 13, 44)$ \\ %& \footnotesize$(17, 28, 41, 43)$ \\ %& \footnotesize$( 4, 13, 23, 25)$ \\ %& \footnotesize$( 5, 36, 39, 49)$ \\ %& \footnotesize$(0, 0, 0, 0, 0)$ \\ %, 

& \footnotesize$(10, 13, 17, 25)$ \\ %& \footnotesize$(0, 0, 0, 0, 0)$ \\ %& \footnotesize$( 3, 5, 14, 43)$ \\ %& \footnotesize$( 3, 22, 49, 55)$ \\ %& \footnotesize$( 7, 38, 41, 43)$ \\ %

\hline \footnotesize $D=$ 65
& \footnotesize$( 3, 5, 22, 36)$ \\ %& \footnotesize$(20, 22, 36, 53)$ \\ %& \footnotesize$(0, 0, 0, 0, 0)$ \\ %& \footnotesize$( 3, 22, 50, 56)$ \\ %& \footnotesize$( 3, 27, 45, 56)$ \\ %

& \footnotesize$( 3, 8, 14, 41)$ \\ %& \footnotesize$( 8, 17, 19, 22)$ \\ %& \footnotesize$( 3, 24, 47, 57)$ \\ %& \footnotesize$(11, 14, 18, 23)$ \\ %& \footnotesize$( 6, 22, 46, 57)$ \\ %

& \footnotesize$( 3, 14, 22, 27)$ \\ %& \footnotesize$(17, 22, 36, 56)$ \\ %& \footnotesize$(12, 14, 17, 23)$ \\ %& \footnotesize$( 3, 23, 49, 56)$ \\ %& \footnotesize$( 4, 36, 38, 53)$ \\ %

& \footnotesize$( 3, 23, 48, -8)$ \\ %& \footnotesize$(14, 22, 46, 49)$ \\ %& \footnotesize$( 6, 14, 17, 29)$ \\ %& \footnotesize$( 4, 9, 11, 42)$ \\ %& \footnotesize$( 6, 8, 11, 41)$ \\ %

\hline \footnotesize $D=$ 67
& \footnotesize$( 3, 5, 13, 47)$ \\ %& \footnotesize$(18, 29, 43, 45)$ \\ %& \footnotesize$( 4, 27, 51, 53)$ \\ %& \footnotesize$(17, 31, 41, 46)$ \\ %& \footnotesize$( 4, 10, 17, 37)$ \\ %

& \footnotesize$( 3, 5, 17, 43)$ \\ %& \footnotesize$( 8, 39, 43, 45)$ \\ %& \footnotesize$(10, 27, 45, 53)$ \\ %& \footnotesize$( 4, 29, 47, 55)$ \\ %& \footnotesize$( 3, 37, 42, 53)$ \\ %

& \footnotesize$( 3, 5, 23, 37)$ \\ %& \footnotesize$(10, 37, 43, 45)$ \\ %& \footnotesize$( 6, 27, 49, 53)$ \\ %& \footnotesize$(26, 29, 35, 45)$ \\ %& \footnotesize$( 3, 29, 47, 56)$ \\ %

& \footnotesize$( 3, 17, 55, -7)$ \\ %& \footnotesize$( 4, 39, 45, 47)$ \\ %& \footnotesize$( 4, 28, 48, 55)$ \\ %& \footnotesize$( 5, 7, 17, 39)$ \\ %& \footnotesize$(10, 12, 19, 27)$ \\ %

\end{tabular}& 
\begin{tabular}{lr}

& \footnotesize$( 4, 7, 9, 48)$ \\ %& \footnotesize$(15, 17, 48, 55)$ \\ %& \footnotesize$( 9, 37, 41, 48)$ \\ %& \footnotesize$( 7, 15, 17, 29)$ \\ %& \footnotesize$( 4, 7, 18, 39)$ \\ %

& \footnotesize$( 4, 7, 15, 42)$ \\ %& \footnotesize$(13, 15, 17, 23)$ \\ %& \footnotesize$( 9, 17, 48, 61)$ \\ %& \footnotesize$( 4, 9, 24, 31)$ \\ %& \footnotesize$( 8, 11, 14, 35)$ \\ %

\hline \footnotesize $D=$ 68
& \footnotesize$( 3, 5, 13, 48)$ \\ %& \footnotesize$(21, 23, 41, 52)$ \\ %& \footnotesize$( 4, 11, 13, 41)$ \\ %& \footnotesize$( 5, 12, 21, 31)$ \\ %& \footnotesize$(0, 0, 0, 0, 0)$ \\ %

& \footnotesize$( 3, 5, 23, 38)$ \\ %& \footnotesize$(10, 15, 21, 23)$ \\ %& \footnotesize$( 6, 9, 13, 41)$ \\ %& \footnotesize$( 3, 22, 53, 59)$ \\ %& \footnotesize$(0, 0, 0, 0, 0)$ \\ %

\hline \footnotesize $D=$ 69

& \footnotesize$( 3, 5, 19, 43)$ \\ %& \footnotesize$(0, 0, 0, 0, 0)$ \\ %& \footnotesize$(10, 14, 19, 27)$ \\ %& \footnotesize$( 5, 7, 18, 40)$ \\ %& \footnotesize$(14, 24, 40, 61)$ \\ %

& \footnotesize$( 3, 8, 22, 37)$ \\ %& \footnotesize$(0, 0, 0, 0, 0)$ \\ %& \footnotesize$( 4, 26, 49, 60)$ \\ %& \footnotesize$( 3, 14, 22, 31)$ \\ %& \footnotesize$( 5, 28, 52, 54)$ \\ %

\hline \footnotesize $D=$ 71  %9, símplices

& \footnotesize$( 3, 5, 13, 51)$ \\ %& \footnotesize$(22, 24, 43, 54)$ \\ %& \footnotesize$( 4, 40, 42, 57)$ \\ %& \footnotesize$( 7, 11, 16, 38)$ \\ %& \footnotesize$(18, 25, 39, 61)$ \\ %

& \footnotesize$( 3, 5, 22, 42)$ \\ %& \footnotesize$(22, 24, 40, 57)$ \\ %& \footnotesize$(20, 24, 42, 57)$ \\ %& \footnotesize$( 3, 11, 16, 42)$ \\ %& \footnotesize$( 5, 13, 22, 32)$ \\ %

& \footnotesize$( 3, 8, 19, 42)$ \\ %& \footnotesize$(21, 24, 41, 57)$ \\ %& \footnotesize$( 9, 42, 44, 48)$ \\ %& \footnotesize$( 9, 15, 22, 26)$ \\ %& \footnotesize$( 5, 8, 22, 37)$ \\ %

& \footnotesize$( 3, 8, 23, 38)$ \\ %& \footnotesize$(11, 16, 21, 24)$ \\ %& \footnotesize$( 6, 9, 13, 44)$ \\ %& \footnotesize$(12, 34, 40, 57)$ \\ %& \footnotesize$( 5, 11, 13, 43)$ \\ %

& \footnotesize$( 3, 11, 20, 38)$ \\ %& \footnotesize$(11, 17, 20, 24)$ \\ %& \footnotesize$( 3, 13, 24, 32)$ \\ %& \footnotesize$( 3, 32, 46, 62)$ \\ %& \footnotesize$(13, 24, 43, 63)$ \\ %

& \footnotesize$( 3, 13, 18, 38)$ \\ %& \footnotesize$(11, 24, 43, 65)$ \\ %& \footnotesize$( 8, 11, 15, 38)$ \\ %& \footnotesize$( 4, 19, 59, 61)$ \\ %& \footnotesize$( 7, 9, 13, 43)$ \\ %

& \footnotesize$(4, 7, 18, 43)$ \\ %& \footnotesize$( 7, 16, 18, 31)$ \\ %& \footnotesize$( 4, 38, 40, 61)$ \\ %& \footnotesize$( 4, 41, 43, 55)$ \\ %& \footnotesize$(18, 26, 38, 61)$ \\ %

& \footnotesize$(4, 11, 18, 39)$ \\ %& \footnotesize$( 8, 15, 18, 31)$ \\ %& \footnotesize$(13, 19, 50, 61)$ \\ %& \footnotesize$( 4, 27, 55, 57)$ \\ %& \footnotesize$( 5, 7, 9, 51)$ \\ %

& \footnotesize$( 4, 39, 49, -20)$ \\ %& \footnotesize$(5, 8, 18, 41)$ \\ %& \footnotesize$( 9, 26, 51, 57)$ \\ %& \footnotesize$( 5, 12, 26, 29)$ \\ %& \footnotesize$( 6, 39, 41, 57)$ \\ %

\hline \footnotesize $D=$ 73 %5, símplices

& \footnotesize$( 3, 5, 17, 49)$ \\ %& \footnotesize$( 8, 43, 47, 49)$ \\ %& \footnotesize$(14, 34, 44, 55)$ \\ %& \footnotesize$( 4, 10, 17, 43)$ \\ %& \footnotesize$( 3, 22, 58, 64)$ \\ %

& \footnotesize$( 3, 8, 14, 49)$ \\ %& \footnotesize$( 8, 44, 46, 49)$ \\ %& \footnotesize$( 3, 27, 53, 64)$ \\ %& \footnotesize$( 5, 33, 47, 62)$ \\ %& \footnotesize$( 3, 31, 49, 64)$ \\ %

& \footnotesize$( 3, 11, 17, 43)$ \\ %& \footnotesize$(10, 43, 45, 49)$ \\ %& \footnotesize$(13, 16, 20, 25)$ \\ %& \footnotesize$(17, 38, 43, 49)$ \\ %& \footnotesize$( 3, 17, 22, 32)$ \\ %

& \footnotesize$( 3, 26, 54, -9)$ \\ %& \footnotesize$( 3, 40, 49, 55)$ \\ %& \footnotesize$(20, 26, 42, 59)$ \\ %& \footnotesize$( 4, 23, 59, 61)$ \\ %& \footnotesize$( 6, 8, 11, 49)$ \\ %

& \footnotesize$( 4, 39, 47, -16)$ \\ %& \footnotesize$( 4, 43, 45, 55)$ \\ %& \footnotesize$(13, 15, 21, 25)$ \\ %& \footnotesize$( 5, 14, 17, 38)$ \\ %& \footnotesize$( 7, 41, 44, 55)$ \\ %

\hline \footnotesize $D=$ 74 %3, símplices

& \footnotesize$( 3, 5, 13, 54)$ \\ %& \footnotesize$(23, 25, 45, 56)$ \\ %& \footnotesize$( 4, 15, 27, 29)$ \\ %& \footnotesize$(11, 30, 51, 57)$ \\ %& \footnotesize$(0, 0, 0, 0, 0)$ \\ %

& \footnotesize$( 7, 20, 23, 25)$ \\ %& \footnotesize$( 7, 39, 50, 53)$ \\ %& \footnotesize$(0, 0, 0, 0, 0)$ \\ %& \footnotesize$(12, 15, 19, 29)$ \\ %& \footnotesize$(3, 5, 14, 53)$ \\ %

& \footnotesize$( 3, 5, 23, 44)$ \\ %& \footnotesize$(10, 17, 23, 25)$ \\ %& \footnotesize$( 6, 15, 25, 29)$ \\ %& \footnotesize$( 3, 29, 56, 61)$ \\ %& \footnotesize$(0, 0, 0, 0, 0)$ \\ %

\hline \footnotesize $D=$ 75
& \footnotesize$(3, 8, 19, 46)$ \\ %& \footnotesize$(0, 0, 0, 0, 0)$ \\ %& \footnotesize$( 7, 9, 13, 47)$ \\ %& \footnotesize$( 4, 41, 43, 63)$ \\ %& \footnotesize$(11, 31, 52, 57)$ \\ %

& \footnotesize$(3, 41, 46, -14)$ \\ %& \footnotesize$(0, 0, 0, 0, 0)$ \\ %& \footnotesize$( 4, 11, 19, 42)$ \\ %& \footnotesize$( 4, 31, 57, 59)$ \\ %& \footnotesize$(14, 16, 19, 27)$ \\ %

& \footnotesize$(4, 7, 19, 46)$ \\ %& \footnotesize$(14, 17, 19, 26)$ \\ %& \footnotesize$( 8, 43, 47, 53)$ \\ %& \footnotesize$( 4, 41, 47, 59)$ \\ %& \footnotesize$( 8, 11, 26, 31)$ \\ %

& \footnotesize$(6, 8, 19, 43)$ \\ %& \footnotesize$(0, 0, 0, 0, 0)$ \\ %& \footnotesize$(4, 7, 18, 47)$ \\ %& \footnotesize$( 4, 43, 51, 53)$ \\ %& \footnotesize$( 7, 17, 19, 33)$ \\ %

\hline \footnotesize $D=$ 76 %2, símplices

& \footnotesize$(4, 9, 11, 53)$ \\ %& \footnotesize$(0, 0, 0, 0, 0)$ \\ %& \footnotesize$(8, 11, 17, 41)$ \\ %& \footnotesize$(7, 9, 13, 48)$ \\ %& \footnotesize$(7, 17, 20, 33)$ \\ %

& \footnotesize$(5, 14, 17, 41)$ \\ %& \footnotesize$(7, 27, 58, 61)$ \\ %& \footnotesize$(0, 0, 0, 0, 0)$ \\ %& \footnotesize$(9, 11, 26, 31)$ \\ %& \footnotesize$(7, 11, 13, 46)$ \\ %

\hline \footnotesize $D=$ 77 %5, símplices

& \footnotesize$( 3, 5, 13, 57)$ \\ %& \footnotesize$(24, 26, 47, 58)$ \\ %& \footnotesize$( 4, 31, 59, 61)$ \\ %& \footnotesize$( 6, 43, 47, 59)$ \\ %& \footnotesize$( 4, 43, 50, 58)$ \\ %

& \footnotesize$( 3, 8, 26, 41)$ \\ %& \footnotesize$(12, 17, 23, 26)$ \\ %& \footnotesize$(16, 29, 43, 67)$ \\ %& \footnotesize$( 3, 31, 53, 68)$ \\ %& \footnotesize$( 5, 43, 45, 62)$ \\ %

& \footnotesize$( 3, 17, 26, 32)$ \\ %& \footnotesize$(15, 17, 20, 26)$ \\ %& \footnotesize$( 3, 27, 57, 68)$ \\ %& \footnotesize$( 3, 26, 58, 68)$ \\ %& \footnotesize$( 4, 36, 50, 65)$ \\ %

& \footnotesize$( 3, 26, 59, -10)$ \\ %& \footnotesize$( 6, 17, 26, 29)$ \\ %& \footnotesize$( 3, 30, 54, 68)$ \\ %& \footnotesize$( 8, 10, 13, 47)$ \\ %& \footnotesize$( 8, 18, 23, 29)$ \\ %

& \footnotesize$( 8, 21, 23, 26)$ \\ %& \footnotesize$( 7, 16, 26, 29)$ \\ %& \footnotesize$(0, 0, 0, 0, 0)$ \\ %& \footnotesize$( 3, 29, 56, 67)$ \\ %& \footnotesize$(3, 8, 14, 53)$ \\ %, , 

\hline \footnotesize $D=$ 79 %12, símplices

& \footnotesize$( 3, 5, 14, 58)$ \\ %& \footnotesize$( 7, 48, 51, 53)$ \\ %& \footnotesize$(13, 16, 20, 31)$ \\ %& \footnotesize$(17, 28, 41, 73)$ \\ %& \footnotesize$( 4, 15, 27, 34)$ \\ %

& \footnotesize$( 3, 5, 17, 55)$ \\ %& \footnotesize$( 8, 47, 51, 53)$ \\ %& \footnotesize$(16, 31, 44, 68)$ \\ %& \footnotesize$( 9, 14, 20, 37)$ \\ %& \footnotesize$( 4, 10, 23, 43)$ \\ %

& \footnotesize$( 3, 5, 19, 53)$ \\ %& \footnotesize$(20, 35, 51, 53)$ \\ %& \footnotesize$(12, 16, 21, 31)$ \\ %& \footnotesize$( 4, 18, 25, 33)$ \\ %& \footnotesize$( 3, 22, 64, 70)$ \\ %

& \footnotesize$( 3, 5, 29, 43)$ \\ %& \footnotesize$(12, 43, 51, 53)$ \\ %& \footnotesize$(10, 16, 23, 31)$ \\ %& \footnotesize$( 8, 30, 53, 68)$ \\ %& \footnotesize$( 3, 33, 55, 68)$ \\ %

& \footnotesize$( 3, 8, 14, 55)$ \\ %& \footnotesize$( 8, 48, 50, 53)$ \\ %& \footnotesize$( 3, 10, 18, 49)$ \\ %& \footnotesize$(13, 17, 22, 28)$ \\ %& \footnotesize$(10, 23, 53, 73)$ \\ %

& \footnotesize$( 3, 8, 19, 50)$ \\ %& \footnotesize$(20, 36, 50, 53)$ \\ %& \footnotesize$(10, 47, 49, 53)$ \\ %& \footnotesize$( 4, 14, 25, 37)$ \\ %& \footnotesize$( 3, 11, 17, 49)$ \\ %

& \footnotesize$( 3, 11, 20, 46)$ \\ %& \footnotesize$(11, 46, 49, 53)$ \\ %& \footnotesize$( 3, 36, 50, 70)$ \\ %& \footnotesize$( 4, 35, 53, 67)$ \\ %& \footnotesize$( 3, 36, 53, 67)$ \\ %

& \footnotesize$( 3, 19, 25, 33)$ \\ %& \footnotesize$(18, 20, 53, 68)$ \\ %& \footnotesize$(4, 7, 25, 44)$ \\ %& \footnotesize$( 5, 19, 22, 34)$ \\ %& \footnotesize$( 9, 12, 16, 43)$ \\ %

& \footnotesize$( 3, 36, 56, -15)$ \\ %& \footnotesize$( 5, 34, 53, 67)$ \\ %& \footnotesize$( 7, 11, 16, 46)$ \\ %& \footnotesize$( 5, 7, 24, 44)$ \\ %& \footnotesize$( 9, 16, 21, 34)$ \\ %

& \footnotesize$( 4, 15, 17, 44)$ \\ %& \footnotesize$(16, 20, 55, 68)$ \\ %& \footnotesize$( 5, 41, 55, 58)$ \\ %& \footnotesize$(14, 16, 23, 27)$ \\ %& \footnotesize$( 5, 9, 23, 43)$ \\ %

& \footnotesize$( 5, 8, 23, 44)$ \\ %& \footnotesize$( 7, 16, 27, 30)$ \\ %& \footnotesize$( 7, 10, 29, 34)$ \\ %& \footnotesize$(29, 34, 41, 55)$ \\ %& \footnotesize$( 7, 9, 30, 34)$ \\ %, 

& \footnotesize$( 5, 9, 32, 34)$ \\ %& \footnotesize$( 9, 14, 16, 41)$ \\ %& \footnotesize$( 5, 14, 17, 44)$ \\ %& \footnotesize$(17, 27, 42, 73)$ \\ %& \footnotesize$( 7, 13, 16, 44)$ \\ %

\hline \footnotesize $D=$ 81 %1, símplice

& \footnotesize$( 3, 5, 13, 61)$ \\ %& \footnotesize$(0, 0, 0, 0)$ \\ %& \footnotesize$( 4, 46, 48, 65)$ \\ %& \footnotesize$( 6, 14, 25, 37)$ \\ %& \footnotesize$( 4, 29, 61, 69)$ \\ %

\hline \footnotesize $D=$ 82 %1, símplice 

& \footnotesize$( 4, 19, 29, 31)$ \\ %& \footnotesize$(0, 0, 0, 0)$ \\ %& \footnotesize$( 7, 13, 30, 33)$ \\ %& \footnotesize$( 5, 14, 17, 47)$ \\ %& \footnotesize$( 7, 45, 47, 66)$ \\ %

\hline \footnotesize $D=$ 83 %10, símplices, 

& \footnotesize$(3, 5, 26, 50)$ \\ %& \footnotesize$(11, 19, 26, 28)$ \\ %& \footnotesize$(16, 28, 50, 73)$ \\ %& \footnotesize$(3, 16, 30, 35)$ \\ %& \footnotesize$(5, 36, 58, 68)$ \\ %

& \footnotesize$(3, 5, 28, 48)$ \\ %& \footnotesize$(26, 28, 46, 67)$ \\ %& \footnotesize$(7, 11, 16, 50)$ \\ %& \footnotesize$(3, 22, 68, 74)$ \\ %& \footnotesize$(12, 34, 57, 64)$ \\ %

& \footnotesize$(3, 8, 14, 59)$ \\ %& \footnotesize$(8, 23, 25, 28)$ \\ %& \footnotesize$(3, 10, 19, 52)$ \\ %& \footnotesize$(6, 35, 61, 65)$ \\ %& \footnotesize$(28, 38, 49, 52)$ \\ %

& \footnotesize$( 3, 11, 20, 50)$ \\ %& \footnotesize$(11, 21, 24, 28)$ \\ %& \footnotesize$(3, 45, 51, 68)$ \\ %& \footnotesize$(4, 39, 54, 70)$ \\ %& \footnotesize$(5, 28, 66, 68)$ \\ %

& \footnotesize$( 3, 14, 20, 47)$ \\ %& \footnotesize$(12, 21, 23, 28)$ \\ %& \footnotesize$(6, 46, 50, 65)$ \\ %& \footnotesize$(4, 35, 54, 74)$ \\ %& \footnotesize$(5, 7, 19, 53)$ \\ %

& \footnotesize$( 3, 14, 23, 44)$ \\ %& \footnotesize$(13, 20, 23, 28)$ \\ %& \footnotesize$(6, 28, 65, 68)$ \\ %& \footnotesize$( 3, 45, 54, 65)$ \\ %& \footnotesize$(11, 17, 24, 32)$ \\ %

& \footnotesize$( 3, 20, 26, 35)$ \\ %& \footnotesize$(16, 19, 21, 28)$ \\ %& \footnotesize$(4, 7, 19, 54)$ \\ %& \footnotesize$(12, 16, 21, 35)$ \\ %& \footnotesize$( 4, 19, 26, 35)$ \\ %

& \footnotesize$( 3, 37, 59, -15)$ \\ %& \footnotesize$(5, 8, 28, 43)$ \\ %& \footnotesize$( 9, 50, 52, 56)$ \\ %& \footnotesize$( 5, 38, 52, 72)$ \\ %& \footnotesize$( 8, 11, 15, 50)$ \\ %

& \footnotesize$( 3, 44, 52, -15)$ \\ %& \footnotesize$( 5, 13, 28, 38)$ \\ %& \footnotesize$( 6, 17, 29, 32)$ \\ %& \footnotesize$( 8, 37, 59, 63)$ \\ %& \footnotesize$( 9, 11, 14, 50)$ \\ %

& \footnotesize$( 5, 12, 14, 53)$ \\ %& \footnotesize$( 6, 47, 50, 64)$ \\ %& \footnotesize$( 7, 44, 48, 68)$ \\ %& \footnotesize$( 6, 11, 14, 53)$ \\ %& \footnotesize$( 6, 14, 17, 47)$ \\ %

\hline \footnotesize $D=$ 84 %%1, símplice 

& \footnotesize$(3, 5, 13, 64)$ \\ %& \footnotesize$(0, 0, 0, 0, 0)$ \\ %& \footnotesize$(4, 17, 31, 33)$ \\ %& \footnotesize$(8, 13, 19, 45)$ \\ %& \footnotesize$(0, 0, 0, 0, 0)$ \\ %

\hline \footnotesize $D=$ 85 %4, símplices,, 
& \footnotesize$( 3, 5, 32, 46)$ \\ %& \footnotesize$(13, 46, 55, 57)$ \\ %& \footnotesize$(0, 0, 0, 0, 0)$ \\ %& \footnotesize$( 8, 45, 57, 61)$ \\ %& \footnotesize$( 3, 35, 61, 72)$ \\ %

\end{tabular}& 
\begin{tabular}{lr}

& \footnotesize$( 3, 31, 64, -12)$ \\ %& \footnotesize$( 4, 7, 18, 57)$ \\ %& \footnotesize$(11, 47, 52, 61)$ \\ %& \footnotesize$( 4, 46, 57, 64)$ \\ %& \footnotesize$( 7, 38, 62, 64)$ \\ %

& \footnotesize$( 3, 32, 63, -12)$ \\ %& \footnotesize$( 4, 46, 48, 73)$ \\ %& \footnotesize$( 6, 8, 11, 61)$ \\ %& \footnotesize$( 4, 27, 69, 71)$ \\ %& \footnotesize$( 7, 31, 64, 69)$ \\ %

& \footnotesize$( 3, 46, 58, -21)$ \\ %& \footnotesize$( 7, 9, 13, 57)$ \\ %& \footnotesize$( 6, 32, 61, 72)$ \\ %& \footnotesize$( 8, 19, 22, 37)$ \\ %& \footnotesize$( 4, 23, 71, 73)$ \\ %

\hline \footnotesize $D=$ 87 %2, símplices

& \footnotesize$( 3, 5, 13, 67)$ \\ %& \footnotesize$(0, 0, 0, 0, 0)$ \\ %& \footnotesize$( 4, 35, 67, 69)$ \\ %& \footnotesize$(13, 35, 60, 67)$ \\ %& \footnotesize$( 5, 13, 22, 48)$ \\ %

& \footnotesize$( 3, 8, 22, 55)$ \\ %& \footnotesize$(0, 0, 0, 0, 0)$ \\ %& \footnotesize$( 4, 11, 19, 54)$ \\ %& \footnotesize$( 4, 41, 55, 75)$ \\ %& \footnotesize$(17, 19, 22, 30)$ \\ %

\hline \footnotesize $D=$ 89 %3, símplices, 

& \footnotesize$( 3, 5, 34, 48)$ \\ %& \footnotesize$(28, 30, 48, 73)$ \\ %& \footnotesize$(11, 18, 26, 35)$ \\ %& \footnotesize$(13, 30, 55, 81)$ \\ %& \footnotesize$( 3, 13, 24, 50)$ \\ %

& \footnotesize$( 4, 7, 18, 61)$ \\ %& \footnotesize$( 7, 40, 65, 67)$ \\ %& \footnotesize$( 4, 51, 61, 63)$ \\ %& \footnotesize$( 5, 51, 54, 69)$ \\ %& \footnotesize$( 7, 51, 54, 67)$ \\ %

& \footnotesize$( 4, 49, 51, -14)$ \\ %& \footnotesize$(10, 48, 54, 67)$ \\ %& \footnotesize$( 9, 13, 20, 48)$ \\ %& \footnotesize$( 7, 9, 13, 61)$ \\ %& \footnotesize$(10, 13, 19, 48)$ \\ %

\hline \footnotesize $D=$ 91 %2, símplices
& \footnotesize$( 3, 5, 23, 61)$ \\ %& \footnotesize$(10, 53, 59, 61)$ \\ %& \footnotesize$( 6, 50, 54, 73)$ \\ %& \footnotesize$( 4, 29, 71, 79)$ \\ %& \footnotesize$( 3, 22, 76, 82)$ \\ %

& \footnotesize$( 3, 8, 23, 58)$ \\ %& \footnotesize$(11, 53, 58, 61)$ \\ %& \footnotesize$(11, 54, 57, 61)$ \\ %& \footnotesize$( 4, 41, 59, 79)$ \\ %& \footnotesize$( 3, 11, 20, 58)$ \\ %

\hline \footnotesize $D=$ 94 %%1, símplice
& \footnotesize$( 3, 5, 13, 74)$ \\ %& \footnotesize$(27, 38, 61, 63)$ \\ %& \footnotesize$( 4, 19, 35, 37)$ \\ %& \footnotesize$( 7, 16, 29, 43)$ \\ %& \footnotesize$(0, 0, 0, 0, )$ \\ %

\hline \footnotesize $D=$ 95

& \footnotesize$(3, 5, 32, 56)$ \\ %& \footnotesize$(13, 21, 30, 32)$ \\ %& \footnotesize$(0, 0, 0, 0, 0)$ \\ %& \footnotesize$( 3, 22, 80, 86)$ \\ %& \footnotesize$( 5, 13, 22, 56)$ \\ %

& \footnotesize$(3, 5, 37, 51)$ \\ %& \footnotesize$(30, 32, 51, 78)$ \\ %& \footnotesize$(0, 0, 0, 0, 0)$ \\ %& \footnotesize$( 5, 18, 32, 41)$ \\ %& \footnotesize$( 3, 41, 67, 80)$ \\ %

& \footnotesize$(3, 8, 23, 62)$ \\ %& \footnotesize$(11, 24, 29, 32)$ \\ %& \footnotesize$( 9, 12, 16, 59)$ \\ %& \footnotesize$( 4, 51, 62, 74)$ \\ %& \footnotesize$(6, 23, 26, 41)$ \\ %

& \footnotesize$(3, 8, 26, 59)$ \\ %& \footnotesize$(12, 23, 29, 32)$ \\ %& \footnotesize$(12, 52, 59, 68)$ \\ %& \footnotesize$( 7, 11, 16, 62)$ \\ %& \footnotesize$(6, 8, 29, 53)$ \\ %

& \footnotesize$(3, 11, 28, 54)$ \\ %& \footnotesize$(28, 32, 54, 77)$ \\ %& \footnotesize$(17, 21, 26, 32)$ \\ %& \footnotesize$( 3, 17, 32, 44)$ \\ %& \footnotesize$( 3, 44, 58, 86)$ \\ %

& \footnotesize$(3, 17, 26, 50)$ \\ %& \footnotesize$(15, 23, 26, 32)$ \\ %& \footnotesize$(11, 25, 28, 32)$ \\ %& \footnotesize$( 3, 11, 20, 62)$ \\ %& \footnotesize$(0, 0, 0, 0, 0)$ \\ %

\hline \footnotesize $D=$ 97 %, 6, símplices

& \footnotesize$( 4, 53, 65, -24)$ \\ %& \footnotesize$(6, 8, 11, 73)$ \\ %& \footnotesize$(11, 53, 61, 70)$ \\ %& \footnotesize$( 3, 35, 72, 85)$ \\ %& \footnotesize$( 4, 31, 79, 81)$ \\ %

& \footnotesize$( 5, 54, 63, -24)$ \\ %& \footnotesize$(28, 39, 63, 65)$ \\ %& \footnotesize$( 9, 15, 22, 52)$ \\ %& \footnotesize$(3, 5, 13, 77)$ \\ %& \footnotesize$( 4, 39, 75, 77)$ \\ %

& \footnotesize$( 6, 39, 73, -20)$ \\ %& \footnotesize$( 4, 42, 68, 81)$ \\ %& \footnotesize$(3, 5, 23, 67)$ \\ %& \footnotesize$( 4, 38, 73, 80)$ \\ %& \footnotesize$(10, 57, 63, 65)$ \\ %

& \footnotesize$( 7, 17, 20, 54)$ \\ %& \footnotesize$(11, 14, 20, 53)$ \\ %& \footnotesize$(11, 40, 71, 73)$ \\ %& \footnotesize$(4, 7, 34, 53)$ \\ %& \footnotesize$( 9, 14, 34, 41)$ \\ %

& \footnotesize$( 7, 41, 71, -21)$ \\ %& \footnotesize$(3, 8, 14, 73)$ \\ %& \footnotesize$( 3, 36, 71, 85)$ \\ %& \footnotesize$( 4, 41, 65, 85)$ \\ %& \footnotesize$( 8, 60, 62, 65)$ \\ %

& \footnotesize$( 9, 39, 70, -20)$ \\ %& \footnotesize$( 3, 13, 28, 54)$ \\ %& \footnotesize$(3, 5, 38, 52)$ \\ %& \footnotesize$(23, 28, 65, 79)$ \\ %& \footnotesize$(15, 52, 63, 65)$ \\ %

\hline \footnotesize $D=$ 101 %4, símplices

& \footnotesize$( 3, 5, 32, 62)$ \\ %& \footnotesize$(13, 23, 32, 34)$ \\ %& \footnotesize$(28, 34, 60, 81)$ \\ %& \footnotesize$( 3, 17, 22, 60)$ \\ %& \footnotesize$( 6, 44, 70, 83)$ \\ %

& \footnotesize$( 3, 8, 14, 77)$ \\ %& \footnotesize$( 8, 29, 31, 34)$ \\ %& \footnotesize$( 3, 38, 74, 88)$ \\ %& \footnotesize$( 7, 45, 65, 86)$ \\ %& \footnotesize$( 9, 21, 34, 38)$ \\ %

& \footnotesize$( 3, 8, 38, 53)$ \\ %& \footnotesize$(16, 21, 31, 34)$ \\ %& \footnotesize$( 6, 38, 71, 88)$ \\ %& \footnotesize$( 8, 37, 77, 81)$ \\ %& \footnotesize$( 5, 17, 19, 61)$ \\ %

& \footnotesize${\bf( 6, 14, 17, 65)}$ \\ %& \footnotesize$( 6, 14, 17, 65)$ \\ %& \footnotesize$( 6, 14, 17, 65)$ \\ %& \footnotesize$( 6, 14, 17, 65)$ \\ %& \footnotesize$( 6, 14, 17, 65)$ \\ %*

\hline \footnotesize $D=$ 103 %7, símplices
& \footnotesize$(3, 5, 29, 67)$ \\ %& \footnotesize$(12, 59, 67, 69)$ \\ %& \footnotesize$(20, 56, 62, 69)$ \\ %& \footnotesize$( 7, 19, 32, 46)$ \\ %& \footnotesize$( 3, 20, 38, 43)$ \\ %

& \footnotesize$(3, 5, 38, 58)$ \\ %& \footnotesize$(15, 56, 67, 69)$ \\ %& \footnotesize$( 9, 13, 20, 62)$ \\ %& \footnotesize$( 8, 19, 31, 46)$ \\ %& \footnotesize$(10, 16, 23, 55)$ \\ %

& \footnotesize$(3, 20, 26, 55)$ \\ %& \footnotesize$(16, 60, 62, 69)$ \\ %& \footnotesize$(5, 9, 23, 67)$ \\ %& \footnotesize$( 4, 23, 89, 91)$ \\ %& \footnotesize$( 9, 15, 22, 58)$ \\ %

& \footnotesize$(4, 7, 18, 75)$ \\ %& \footnotesize$( 7, 24, 26, 47)$ \\ %& \footnotesize$( 4, 59, 71, 73)$ \\ %& \footnotesize$(13, 57, 63, 74)$ \\ %& \footnotesize$( 8, 11, 26, 59)$ \\ %

& \footnotesize$(4, 11, 26, 63)$ \\ %& \footnotesize$(10, 23, 26, 45)$ \\ %& \footnotesize$(7, 9, 13, 75)$ \\ %& \footnotesize$( 4, 57, 59, 87)$ \\ %& \footnotesize$( 8, 18, 31, 47)$ \\ %

& \footnotesize$(4, 55, 73, -28)$ \\ %& \footnotesize$( 7, 12, 26, 59)$ \\ %& \footnotesize$( 8, 15, 38, 43)$ \\ %& \footnotesize$( 7, 19, 24, 54)$ \\ %& \footnotesize$(11, 13, 21, 59)$ \\ %

& \footnotesize$(5, 8, 38, 53)$ \\ %& \footnotesize$(10, 13, 19, 62)$ \\ %& \footnotesize$(13, 21, 32, 38)$ \\ %& \footnotesize$( 8, 19, 23, 54)$ \\ %& \footnotesize$( 9, 29, 31, 35)$ \\ %

\hline \footnotesize $D=$ 107

& \footnotesize$(3, 5, 13, 87)$ \\ %& \footnotesize$(34, 36, 67, 78)$ \\ %& \footnotesize$( 4, 43, 83, 85)$ \\ %& \footnotesize$( 8, 18, 33, 49)$ \\ %& \footnotesize$( 6, 16, 27, 59)$ \\ %

& \footnotesize$(3, 5, 38, 62)$ \\ %& \footnotesize$(15, 23, 34, 36)$ \\ %& \footnotesize$( 9, 43, 78, 85)$ \\ %& \footnotesize$( 4, 14, 31, 59)$ \\ %& \footnotesize$(12, 19, 27, 50)$ \\ %

& \footnotesize$(3, 5, 43, 57)$ \\ %& \footnotesize$(34, 36, 57, 88)$ \\ %& \footnotesize$(10, 43, 77, 85)$ \\ %& \footnotesize$( 5, 36, 82, 92)$ \\ %& \footnotesize$( 3, 45, 75, 92)$ \\ %

& \footnotesize$(3, 8, 14, 83)$ \\ %& \footnotesize$( 8, 31, 33, 36)$ \\ %& \footnotesize$( 3, 13, 25, 67)$ \\ %& \footnotesize$(17, 23, 30, 38)$ \\ %& \footnotesize$(36, 49, 63, 67)$ \\ %

& \footnotesize$(3, 14, 26, 65)$ \\ %& \footnotesize$(14, 27, 31, 36)$ \\ %& \footnotesize$( 3, 23, 38, 44)$ \\ %& \footnotesize$( 4, 51, 70, 90)$ \\ %& \footnotesize$(21, 23, 28, 36)$ \\ %

& \footnotesize$(3, 14, 41, 50)$ \\ %& \footnotesize$(19, 22, 31, 36)$ \\ %& \footnotesize$(20, 23, 27, 38)$ \\ %& \footnotesize$( 4, 47, 73, 91)$ \\ %& \footnotesize$( 4, 15, 27, 62)$ \\ %

& \footnotesize$(4, 7, 39, 58)$ \\ %& \footnotesize$(17, 25, 27, 39)$ \\ %& \footnotesize$( 7, 25, 30, 46)$ \\ %& \footnotesize$( 4, 11, 30, 63)$ \\ %& \footnotesize$(11, 24, 27, 46)$ \\ %

& \footnotesize$( 4, 63, 73, -32)$ \\ %& \footnotesize$( 8, 11, 27, 62)$ \\ %& \footnotesize$( 9, 17, 39, 43)$ \\ %& \footnotesize$( 5, 19, 22, 62)$ \\ %& \footnotesize$(10, 12, 19, 67)$ \\ %

\hline \footnotesize $D=$ 109 %4, símplices

& \footnotesize$(3, 5, 29, 73)$ \\ %& \footnotesize$(12, 63, 71, 73)$ \\ %& \footnotesize$(16, 22, 29, 43)$ \\ %& \footnotesize$( 5, 45, 75, 94)$ \\ %& \footnotesize$(3, 22, 94, 100)$ \\ %

& \footnotesize$(3, 5, 44, 58)$ \\ %& \footnotesize$(17, 58, 71, 73)$ \\ %& \footnotesize$(13, 22, 32, 43)$ \\ %& \footnotesize$(42, 47, 57, 73)$ \\ %& \footnotesize$( 3, 47, 77, 92)$ \\ %

& \footnotesize$(3, 40, 82, -15)$ \\ %& \footnotesize$(5, 9, 23, 73)$ \\ %& \footnotesize$(14, 19, 30, 47)$ \\ %& \footnotesize$( 4, 58, 60, 97)$ \\ %& \footnotesize$(20, 22, 29, 39)$ \\ %

& \footnotesize$(3, 41, 81, -15)$ \\ %& \footnotesize$(5, 59, 73, 82)$ \\ %& \footnotesize$(6, 8, 11, 85)$ \\ %& \footnotesize$(10, 22, 29, 49)$ \\ %& \footnotesize$( 4, 35, 89, 91)$ \\ %

\hline \footnotesize $D=$ 113 %2, símplices:

& \footnotesize$(3, 5, 38, 68)$ \\ %& \footnotesize$(15, 25, 36, 38)$ \\ %& \footnotesize$( 9, 15, 22, 68)$ \\ %& \footnotesize$(3, 22, 98, 104)$ \\ %& \footnotesize$(5, 36, 88, 98)$ \\ %

& \footnotesize$( 3, 17, 32, 62)$ \\ %& \footnotesize$(17, 27, 32, 38)$ \\ %& \footnotesize$(3, 20, 38, 53)$ \\ %& \footnotesize$(3, 53, 67, 104)$ \\ %& \footnotesize$(20, 25, 31, 38)$ \\ %

\hline \footnotesize $D=$ 119 %2, símplices

& \footnotesize$(3, 5, 38, 74)$ \\ %& \footnotesize$(15, 27, 38, 40)$ \\ %& \footnotesize$( 9, 24, 40, 47)$ \\ %& \footnotesize$( 3, 47, 92, 97)$ \\ %& \footnotesize$( 8, 22, 37, 53)$ \\ %

& \footnotesize$(3, 8, 47, 62)$ \\ %& \footnotesize$(19, 24, 37, 40)$ \\ %& \footnotesize$( 9, 15, 22, 74)$ \\ %& \footnotesize$( 5, 24, 38, 53)$ \\ %& \footnotesize$( 5, 48, 92, 94)$ \\ %

\hline \footnotesize $D=$ 121 %%1, símplice
& \footnotesize$(3, 8, 14, 97)$ \\ %& \footnotesize$(8, 76, 78, 81)$ \\ %& \footnotesize$(3, 45, 89, 106)$ \\ %& \footnotesize$(19, 26, 34, 43)$ \\ %& \footnotesize$(5, 51, 81, 106)$ \\ %

\hline \footnotesize $D=$ 125
& \footnotesize$(3, 8, 14, 101)$ \\ %& \footnotesize$( 8, 37, 39, 42)$ \\ %& \footnotesize$(3, 47, 92, 109)$ \\ %& \footnotesize$(9, 53, 91, 98)$ \\ %& \footnotesize$(11, 26, 42, 47)$ \\ %

\hline \footnotesize $D=$ 127
& \footnotesize$(3, 5, 53, 67)$ \\ %& \footnotesize$(20, 67, 83, 85)$ \\ %& \footnotesize$(12, 51, 91, 101)$ \\ %& \footnotesize$(12, 67, 85, 91)$ \\ %& \footnotesize$(3, 53, 91, 108)$ \\ %

\hline \footnotesize $D=$ 137
& \footnotesize$(3, 5, 58, 72)$ \\ %& \footnotesize$(44, 46, 72, 113)$ \\ %& \footnotesize$(13, 55, 98, 109)$ \\ %& \footnotesize$(7, 26, 46, 59)$ \\ %& \footnotesize$(3, 59, 97, 116)$ \\ %

\hline \footnotesize $D=$ 139
& \footnotesize$(3, 5, 59, 73)$ \\ %& \footnotesize$(22, 73, 91, 93)$ \\ %& \footnotesize$(16, 28, 41, 55)$ \\ %& \footnotesize$(33, 40, 93, 113)$ \\ %& \footnotesize$(3, 19, 40, 78)$ \\ %

\hline \footnotesize $D=$ 149 %2, símplices
& \footnotesize$(3, 5, 64, 78)$ \\ %& \footnotesize$(48, 50, 78, 123)$ \\ %& \footnotesize$(17, 30, 44, 59)$ \\ %& \footnotesize$(7, 50, 114, 128)$ \\ %& \footnotesize$(3, 63, 105, 128)$ \\ %

& \footnotesize$(3, 8, 14, 125)$ \\ %& \footnotesize$(8, 45, 47, 50)$ \\ %& \footnotesize$(3, 56, 110, 130)$ \\ %& \footnotesize$(23, 32, 42, 53)$ \\ %& \footnotesize$(13, 31, 50, 56)$ \\ %

\hline \footnotesize $D=$ 169 %%1, símplice

& \footnotesize$(3, 5, 74, 88)$ \\ %& \footnotesize$(27, 88, 111, 113)$ \\ %& \footnotesize$(19, 34, 50, 67)$ \\ %& \footnotesize$(16, 89, 113, 121)$ \\ %& \footnotesize$(3, 71, 121, 144)$ \\ %

\hline \footnotesize $D=$ 179 %%1, símplice
& \footnotesize$(3, 5, 79, 93)$ \\ %& \footnotesize$(58, 60, 93, 148)$ \\ %& \footnotesize$(20, 36, 53, 71)$ \\ %& \footnotesize$(9, 34, 60, 77)$ \\ %& \footnotesize$(3, 77, 127, 152)$ \\ %
\hline
\end{tabular}

\end{tabular}
\bigskip
\caption{The $179$ empty $4$-dimensional simplices of width $\ge 3$, where a vector $v\in \Z^4$ represents the simplex $\Delta(v):=\conv(e_1,e_2,e_3,e_4,v)$, of determinant $D=\sum v_i-1$.
All of them have width three except the simplex $\Delta( 6, 14, 17, 65)$ (highlighted), of width four and $D=101$}
\label{table:full_list}
\end{table}

Our proof of Theorem~\ref{thm:main} combines a theoretical part and a computational part:

\begin{itemize}
\item In Section \ref{sec:4hollow} we show that:

\begin{theorem}
\label{thm:bound_intro}
There is no empty $4$-simplex of width $3$ with determinant greater than $5058$.
\end{theorem}

\item In Section~\ref{sec:enumeration} we describe an enumeration procedure through which we have verified that:
\begin{theorem}
\label{thm:eunumeration_intro}
Among the empty $4$-simplices of volume at most $7600$ the only ones of width larger than two are the 179 found by Haase and Ziegler.
\end{theorem}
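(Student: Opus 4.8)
The plan is to reduce the classification to a finite combinatorial search over \emph{weight quintuples} and then to verify emptiness and lattice width by exact arithmetic. The starting point is part (4) of Theorem~\ref{thm:intro_previous_results}: since every empty $4$-simplex is cyclic, each one of determinant $D$ is encoded, up to unimodular equivalence, by a vector $a=(a_0,\dots,a_4)\in(\Z/D\Z)^5$ with $\sum_i a_i\equiv 0\pmod D$ and $\gcd(a_0,\dots,a_4,D)=1$, where $a_i$ records the $i$-th barycentric coordinate read as a homomorphism from the cyclic quotient $\Z^4/\Lambda\cong\Z/D\Z$ to $\Q/\Z$. Two quintuples describe unimodularly equivalent simplices exactly when they differ by the action of $S_5$ (relabelling the five vertices) together with $(\Z/D\Z)^*$ (rechoosing the generator of the quotient), so the search space is the set of orbits of $S_5\times(\Z/D\Z)^*$. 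Emptiness is then a purely arithmetic test: $\Delta_a$ is empty if and only if $\sum_{i=0}^4\{k\,a_i/D\}\ge 2$ for every $k\in\{1,\dots,D-1\}$, since the sum is always a positive integer and a value of exactly $1$ would exhibit those fractional parts as the barycentric coordinates of a non-vertex lattice point of $\Delta_a$.

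The enumeration itself would loop over $D$ from the smallest relevant determinant up to $7600$ and, for each $D$, run over orbit representatives of admissible quintuples. To keep this tractable I would normalize using the group action (rescale so that one coordinate is fixed by a unit and sort the remainder) and, crucially, impose the emptiness inequalities \emph{incrementally} while the quintuple is being built, backtracking as soon as a partial choice already forces some $k$ to violate $\sum_i\{k\,a_i/D\}\ge 2$. For each surviving empty candidate I would reconstruct explicit integer coordinates for $\Delta_a$ (via the Hermite/Smith normal form of the weight data, or as $\Delta(v)$ whenever a unimodular facet is present) and compute its lattice width exactly. The width of a simplex with vertices $w_0,\dots,w_4$ with respect to a primitive functional $f\in(\Z^4)^*$ is $\max_j\langle f,w_j\rangle-\min_j\langle f,w_j\rangle$, and its lattice width is the minimum over $f\neq 0$; this minimum can be certified by LLL-reducing the pertinent lattice and enumerating all short functionals within the resulting ball. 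Since I only need to \emph{detect} width at most two, I would search for a functional of width $1$ or $2$ and discard the simplex the moment one is found, recording only the genuinely width-$\ge3$ survivors.

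Finally I would check that the survivors over all $D\le 7600$ are precisely the $179$ simplices of Table~\ref{table:full_list}, namely the $178$ of width three (determinants $41$ to $179$) and the single width-four simplex $\Delta(6,14,17,65)$ of determinant $101$, and in particular that nothing new appears for $180\le D\le 7600$; note that this range comfortably exceeds the theoretical bound $5058$ of Theorem~\ref{thm:bound_intro}. As a byproduct the same computation records the facet structure of each empty simplex, confirming the footnote's assertion that every empty $4$-simplex up to determinant $7600$ has at least two unimodular facets.

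The main obstacle is purely computational: the number of quintuples before symmetry reduction grows like $D^4$, so the total work up to $D=7600$ is astronomically large unless the emptiness backtracking and the orbit reduction by $S_5\times(\Z/D\Z)^*$ together prune it by several orders of magnitude. The delicate points for \emph{correctness}, as opposed to mere speed, are that the orbit enumeration be provably exhaustive, so that no equivalence class is silently skipped, and that the width routine be exact rather than heuristic, since a single missed width-$\le2$ certificate or, far worse, a missed high-width simplex would invalidate the conclusion. Controlling the rounding in the LLL step and performing the fractional-part arithmetic in exact integers modulo $D$ is therefore where most of the care must go.
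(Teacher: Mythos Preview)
Your framework coincides with the paper's: both exploit cyclicity (Theorem~\ref{thm:intro_previous_results}(4)) to parametrize empty $4$-simplices of determinant $D$ by primitive quintuples in $\T_D^4$ modulo $S_5\times(\Z/D\Z)^*$, and your emptiness test (sum of fractional parts $\ge 2$) is exactly the paper's criterion. The differences are purely algorithmic, but they are what turn an in-principle enumeration into one that actually terminates.

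First, your normalization ``rescale so that one coordinate is fixed by a unit'' presupposes that some entry $a_i$ is already a unit mod $D$; this need not hold for an arbitrary primitive quintuple. The paper fills this gap with Lemma~\ref{lemma:coprime_facets}: the facet volumes of an empty $4$-simplex are pairwise coprime, so whenever $D$ has fewer than five distinct prime factors some facet is unimodular, and the quintuple can be taken of the form $(-1,v_1,v_2,v_3,v_4)$. This is what legitimately cuts the search from $D^4$ to $D^3$ (your Algorithm~1), but it leaves the $D$'s with five or more prime factors uncovered.

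Second, and more decisively, the paper's Algorithm~2 is absent from your plan. For any factorization $D=ab$ with $\gcd(a,b)=1$, reducing a quintuple mod $a$ and mod $b$ sends an empty simplex of determinant $D$ to empty simplices of determinants $a$ and $b$ (Lemma~\ref{lemma:merge}); conversely one can merge the precomputed short lists for $a$ and $b$ via the Chinese remainder theorem. This divide-and-conquer is what makes the full run ($\approx 10^4$ CPU-hours) feasible; the paper reports that it dramatically outperforms the $D^3$ scan, especially when $a,b$ are of comparable size. Backtracking on emptiness alone is unlikely to recover this speedup.

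Finally, the paper's width test is simpler than yours: directly on the quintuple, width $\le k$ holds iff some nontrivial combination $\sum_i\lambda_i u_i\equiv 0\pmod D$ with $\lambda_i\in\{0,\dots,k\}$ exists. No coordinate reconstruction or LLL is needed, so your concerns about rounding in that step do not arise.
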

\end{itemize}

To prove Theorem~\ref{thm:bound_intro} we divide empty $4$-simplices into two classes: those that project (by an integer affine functional) to a \emph{hollow} $3$-polytope and those that do not. Here, a lattice polytope or a more general convex body is called \emph{hollow} (or \emph{lattice-free}) if it does not have any  lattice points in its interior. 

\begin{itemize}
\item If $P$ is an empty simplex of width larger than three that projects to a hollow $3$-polytope $Q$ then $Q$ itself has width larger than three. There are only five hollow $3$-polytopes of width three or larger, all of width three, as classified by~\cite{AKW}. By looking at them we can easily conclude that both $Q$ and $P$ have normalized volume bounded by 27 (Proposition~\ref{prop:hollow_projection}).

\item If $P$ is an empty (or, more generally, hollow) simplex of width larger than three that does not project to a hollow $3$-polytope, then we consider the projection of $P$ along the direction where its rational diameter (as defined in Section~\ref{sec:convex_geom}) is attained. 
%Recent results of~\cite{AKN} \sout{(see our Lemma~\ref{lemma:ca})} 
%\marginpar{Lemma~\ref{lemma:ca} is wrong\\ but the conclusion is\\ correct. See appen-\\dix}
%\paco{define rational diameter?}
We show that the rational diameter of $P$ is at most $42$ (Theorem~\ref{thm:bound}(1)) and combining this with several inequalities on successive minima and covering minima we are able to prove that $P$ must have volume bounded by $5058$ (Theorem~\ref{thm:bound}(2)). A key step for this, to which we devote the whole Section~\ref{sec:3hollow}, is an upper bound on the volume of hollow $3$-dimensional convex bodies of width larger than $1+2\sqrt{3}$ (Theorem~\ref{thm:Santos_I} and Corollary~\ref{coro:Santos_I}). These results generalize (and their proofs use ideas from)~\cite[Prop.~11]{AKW}.
\end{itemize}

For the computational part (Theorem~\ref{thm:eunumeration_intro}) we use the fact that all empty $4$-simplices are cyclic, which allows us to represent an empty simplex $P$ of determinant $D$ as a quintuple $(u_0,\dots,u_4)\in \T_D^4$. Here $\T_D^4$ is the discrete torus of order $D$ (in homogeneous coordinates); that is:
\[
\T_D^4:= \{(u_0,\dots,u_4) \in {\Z_D}^5 : \sum u_i = 0 \pmod D\}.
\]
Recall that saying that $P$ is cyclic means that the quotient group $\Z^4/\Lambda(P)$ is cyclic, where $\Lambda(P)$ is the lattice generated by vertices of $P$. In these conditions, if $(x_0,\dots, x_4)$ are the barycentric coordinates with respect to $P$ of a generator of this quotient group $\Z^4/\Lambda(P)$, we have that $(x_0,\dots, x_4) \in \frac{1}{D} \Z^5$ and we take $D\cdot (x_0,\dots, x_4)\in \Z^5$ as the quintuple representing $P$. Quintuples differing by integer multiples of $D$ represent the same simplex, so that there is a finite number of quintuples to check for each $D$ (a priori $D^4$, since one of the entries is determined by the other four, although in practice we only need to check $D^3$ by reasons to be mentioned in Section~\ref{sec:enumeration}).
Observe that the quintuple corresponding to a simplex $\Delta(v_1,v_2,v_3,v_4)$, in the notation of Table~\ref{table:full_list}, is $(-1, v_1,v_2,v_3,v_4)$.

This formalism makes it rather easy to do computations with cyclic simplices. In particular, in Section~\ref{sec:enumeration} we describe two algorithms that compute all empty $4$-simplices of a given determinant $D$. They work for different values of $D$; one requires $D$ not to be a prime power and the other requires $D$ not to have more than four different prime factors. We have run one or the other algorithm for every $D\in \{1,\dots,7600\}$ thus obtaining a computational proof of Theorem~\ref{thm:eunumeration_intro}. The total computation time was about 10\,000 hours of CPU.

\subsubsection*{Acknowledgements}
Most of the calculations in this work were done with support from the Santander Supercomputing  Group at the University of Cantabria, which provided access to the  Altamira Supercomputer at the Institute of Physics of Cantabria (IFCA-CSIC), a member of the Spanish Supercomputing Network.

We want to thank Benjamin Nill for his invitation to Magdeburg in April 2016 and for helpful discussions and clarifications regarding references~\cite{AKN, AKW, Perr08, Wessels}.

\section{Maximum volume of wide hollow $3$-polytopes}
\label{sec:3hollow}

In this section we give an upper bound for the volume of a $3$-dimensional convex body $K$ in terms of its width, assumed big (see Theorem~\ref{thm:Santos_I} and Corollary~\ref{coro:Santos_I}). We do not need to assume $K$ to be a lattice polytope, or even a polytope.

\subsection{Quick review of convex geometry tools}
\label{sec:convex_geom}

For the statement and proof we need to discuss \emph{successive minima} and \emph{covering minima} of convex bodies with respect to a lattice $\mathcal{L}$. Remember that:

\begin{enumerate}
\item For a centrally symmetric convex body $C\subset \R^d$, 
 the $i$-th successive minimum ($i\in\{1,\dots,d\}$) of $C$ with respect to $\mathcal{L}$ is:
\[
\lambda_i(C):=\inf\{\lambda>0 : \dim(\lambda C\cap \mathcal{L})\geq i\}.
\]
That is to say, $\lambda_i$ is the minimum dilation factor such that $\lambda C$ contains $i$ linearly independent lattice vectors.
Clearly $\lambda_1\le \dots \le \lambda_d$. 

\item  For a (not necessarily symmetric) convex body $K\subset \R^d$, 
 the $i$-th covering minimum ($i\in\{1,\dots,d\}$) of $K$ with respect to $\mathcal{L}$ is defined as
\[
\mu_i(K):=\inf\{\mu>0 : \mu K + \mathcal{L} \text{ intersects every affine subspace of dimension } d-i\}.
\]
Clearly $\mu_1\le \dots \le \mu_d$. 
\end{enumerate}

For example, $\mu_1(K)$ is nothing but the reciprocal of the lattice width of $K$, while $\mu_d(K)$ equals the covering radius of $K$ (the minimum dilation factor $\mu$ such that $\mu K + \mathcal{L}$ covers $\R^d$). 
Similarly, $\lambda_1(C)$ equals twice the packing radius of $C$ (the maximum dilation such that $\lambda C$ does not overlap any lattice translation of it).

Minkowski's Second Theorem~\cite{Gru93} 
relates successive minima and volume of a centrally symmetric convex body as follows:
\begin{equation}
\lambda_1(C)\lambda_2(C)\cdots\lambda_{d}(C)\vol(C)\leq 2^{d}.
\label{eq:Minkowski2}
\end{equation}

Successive minima are not usually defined for a non-centrally symmetric body $K$ (but see~\cite{HHH16}), 
but in this case the successive minima of the \emph{difference body} $K-K:=\{x-y:x,y\in K\}$ have a natural geometric meaning. For example, $\lambda_1(K-K)^{-1}$ equals 
the maximum (lattice) length of a rational segment contained in $K$. 
We call this the \emph{rational diameter of $K$}.\footnote{Observe that this is not the same as the ``lattice diameter" used in \cite{AKN}, defined as the maximum length of a \emph{lattice} segment contained in $K$.}
The volume of the difference body $K-K$ is bounded from below and from above by the Brunn-Minkowski and the Rogers-Shephard inequalities \cite{Gru93}, respectively:
\begin{equation}
2^d \vol(K) \le \vol(K-K) \le {2d \choose d} \vol (K).
\label{eq:voulme_of_difference}
\end{equation}
The lower bound (resp.,~the upper bound)  is an equality if and only if $K$ is centrally symmetric (resp.,~a simplex).

From this we can derive the following inequality relating the volume of $K$ and the first successive minimum of its difference body:
\begin{equation}
\vol(K)   \le   \frac{\vol(K-K) }{ 2^d}   
\le   \frac{1}{\prod_{i=1}^d \lambda_i(K-K)}
\le   \frac{1}{ \lambda_1(K-K)^d}.
\label{eq:First}
\end{equation}

Less is known about the covering radii,
but the following inequalities relating covering minima of $K$ and successive minimum of $K-K$ are
known (\cite{Kan88,Hur90} or see, e.g., \cite[Section 4]{AKW}):
\begin{eqnarray}
\label{eq:AKW1}
\mu_{i+1}(K)-\mu_i(K) &\le& \lambda_{d-i}(K-K), \qquad \forall i\in\{1,\dots,d-1\},\\
\label{eq:AKW2}
\mu_2(K)  &\le& \left( 1 + \frac{2}{\sqrt{3}}\right) \mu_1(K).
\end{eqnarray}

\subsection{The general case}

With these tools we can now prove the main result in this section. Both our statement and proof are based on~\cite[Proposition 11]{AKW}, which is the case $w=3$. The theorem can also be considered the three-dimensional version of~\cite[Thm. 4.1]{AWW}, which gives bounds for the volume of a convex $2$-body of width larger than 1.

\begin{theorem}
	\label{thm:Santos_I}
	Let $K$ be a hollow convex $3$-body of lattice width $w$, with
	$w > 1+2/\sqrt{3}=2.155$ and let $\mu=w^{-1}$ be its first covering minimum.
	Then, 

		$\vol(K) $ is bounded above by
	\begin{eqnarray*}
	&&\frac8{(1-\mu)^3} =\frac{8w^3}{(w-1)^3}, \qquad
		  \text{if } w\ge \frac{2}{\sqrt{3}}(\sqrt{5}-1) +1 = 2.427, \text{ and}\\
	&&\frac{3}{{4{\mu}^2(1 - \mu(1+2/\sqrt{3}))}} = \frac{3w^3}{4(w - (1+2/\sqrt{3}))},\qquad
		   \text{if } w\le  2.427.
	\end{eqnarray*}
		
\end{theorem}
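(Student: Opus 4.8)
The plan is to bound $\vol(K)$ by bounding the three covering minima $\mu_1,\mu_2,\mu_3$ of $K$, and then to relate the product of covering minima to volume. The starting point is the hypothesis, which fixes $\mu_1=\mu=w^{-1}$, together with the key structural fact that $K$ is hollow. Hollowness means that $K$ contains no interior lattice point, which is exactly the statement that the covering radius $\mu_3(K)=\mu_d(K)$ is at least $1$; indeed, if $\mu_3<1$ then $K+\mathcal{L}=\R^3$ with $K$ dilated by less than one, forcing an interior lattice point. So the crucial inequality to exploit is $\mu_3(K)\ge 1$. I would combine this with the chain of inequalities relating covering minima and the successive minima of the difference body, namely \eqref{eq:AKW1} and \eqref{eq:AKW2}.

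First I would set $\lambda_i:=\lambda_i(K-K)$ and write $\mu_3-\mu_1=(\mu_3-\mu_2)+(\mu_2-\mu_1)$. Applying \eqref{eq:AKW1} with $i=2$ and $i=1$ gives $\mu_3-\mu_2\le \lambda_1$ and $\mu_2-\mu_1\le \lambda_2$, hence $\mu_3\le \mu+\lambda_1+\lambda_2$. Using $\mu_3\ge 1$ this yields a lower bound $\lambda_1+\lambda_2\ge 1-\mu$. On the other hand, \eqref{eq:AKW2} gives $\mu_2\le(1+2/\sqrt3)\mu$, so $\mu_3-\mu_2\ge 1-(1+2/\sqrt3)\mu$, and again by \eqref{eq:AKW1} with $i=2$, $\lambda_1\ge \mu_3-\mu_2\ge 1-(1+2/\sqrt3)\mu$ (this is where the threshold $w\le 2.427$, equivalently $\mu\ge(1+2/\sqrt3)^{-1}\cdot\text{something}$, will enter: the bound is only useful when the right-hand side is positive). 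The volume of $K$ is then controlled through \eqref{eq:First}, but that only involves $\lambda_1^3$ and would be too weak; instead I expect the sharper route is to bound $\vol(K)$ via $\vol(K-K)\le\binom{6}{3}\vol(K)$ reversed, i.e.\ $\vol(K)\le\vol(K-K)/2^3$, combined with Minkowski's second theorem \eqref{eq:Minkowski2} applied to the centrally symmetric body $K-K$: $\lambda_1\lambda_2\lambda_3\,\vol(K-K)\le 8$, so $\vol(K)\le 1/(\lambda_1\lambda_2\lambda_3)$.

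The heart of the argument is therefore an optimization: minimize the quantity $\lambda_1\lambda_2\lambda_3$ subject to the constraints extracted above (namely $\lambda_1\le\lambda_2\le\lambda_3$, $\lambda_1+\lambda_2\ge 1-\mu$, and $\lambda_1\ge 1-(1+2/\sqrt3)\mu$ in the regime where this is positive), in order to maximize the resulting volume bound $1/(\lambda_1\lambda_2\lambda_3)$. I anticipate two regimes according to which constraint is binding. In the regime $w\ge 2.427$, the controlling constraint should be $\lambda_1+\lambda_2\ge 1-\mu$ with the extremum occurring at $\lambda_1=\lambda_2=\lambda_3=(1-\mu)/2$-type configuration, producing the bound $8/(1-\mu)^3$. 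In the regime $w\le 2.427$, the extra lower bound $\lambda_1\ge 1-(1+2/\sqrt3)\mu$ becomes active and forces a different optimizer, giving the second expression $3/\bigl(4\mu^2(1-\mu(1+2/\sqrt3))\bigr)$; the change of regime at $w=2.427=\tfrac{2}{\sqrt3}(\sqrt5-1)+1$ is precisely the value of $\mu$ at which the two candidate optimizers coincide.

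The main obstacle I expect is making the constrained optimization rigorous and identifying exactly which inequalities are tight in each regime, since a naive product bound using only $\lambda_1$ (via \eqref{eq:First}) loses too much and one must instead juggle all three $\lambda_i$ against the two covering-minima constraints simultaneously. A subtle point is that \eqref{eq:Minkowski2} bounds $\lambda_1\lambda_2\lambda_3$ from \emph{above} by $8/\vol(K-K)$, so to turn the $\lambda_i$-constraints into an \emph{upper} bound on $\vol(K)$ I must argue in the correct direction: the lower bounds on $\lambda_1+\lambda_2$ and on $\lambda_1$ translate, via the monotone dependence of the volume bound on the $\lambda_i$, into the claimed upper bounds on $\vol(K)$. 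Verifying that the worst case genuinely occurs at the boundary of the feasible region (rather than in its interior) and that the two regimes meet continuously at $w=2.427$ is the delicate computation, which I would carry out by treating $\mu$ as a parameter and differentiating the candidate bound expressions. The underlying ideas are exactly those of \cite[Proposition 11]{AKW}, the case $w=3$, so I would follow that template and extend it to general $w$.
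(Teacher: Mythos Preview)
Your proposal is correct and follows essentially the same route as the paper: derive $\vol(K)\le(\lambda_1\lambda_2\lambda_3)^{-1}$ from Brunn--Minkowski plus Minkowski's second theorem, extract the constraints $\lambda_2\ge 1-\mu-\lambda_1$ and $\lambda_1\ge 1-(1+2/\sqrt3)\mu$ from \eqref{eq:AKW1}, \eqref{eq:AKW2} and $\mu_3\ge1$, and then optimize. The paper streamlines the optimization by first reducing to $\lambda_1\lambda_2\lambda_3\ge\max\{\lambda_1^3,\lambda_1\lambda_2^2\}\ge\max\{\lambda_1^3,\lambda_1(1-\mu-\lambda_1)^2\}$ and analyzing this single-variable function on the interval $[\,1-(1+2/\sqrt3)\mu,\ (1-\mu)/2\,]$, which is exactly the ``rigorous constrained optimization'' you anticipate needing.
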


Figure~\ref{fig:plot1} plots the upper bound of Theorem~\ref{thm:Santos_I} in the interval $w\in[2.4,5]$ that will be of interest for us.

\begin{figure}[ht]
\centerline{
\includegraphics[scale=0.9]{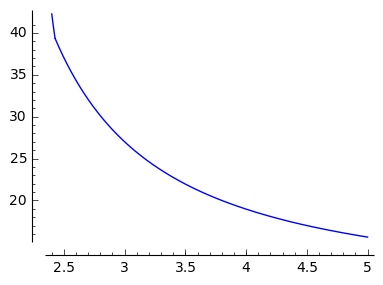}}
\caption{The upper bound of Theorem~\ref{thm:Santos_I} for the Euclidean volume of a hollow $3$-body (Y axis) in terms of its width (X axis).} 
\label{fig:plot1}
\end{figure}

The upper bound in this theorem is certainly not tight, but the threshold $w > 1+2/\sqrt{3}=2.155$ is. Since there is a hollow (non-lattice) triangle of width $ 1+2/\sqrt{3}$ (see~\cite[Figure 2]{Hur90}), taking prisms over it we can construct hollow $3$-polytopes of arbitrarily large volume. (In fact, inequality~\eqref{eq:AKW2} is equivalent to the statement ``no hollow polygon can have lattice width larger than $1+2/\sqrt{3}$''; the triangle in~\cite[Figure 2]{Hur90} shows that this is tight).

\begin{remark}
Theorem~\ref{thm:Santos_I} may be a step towards computing the exact value of the ``flatness constant'' (the maximum width among hollow bodies) in dimension three. By~\cite{AKW} no hollow $3$-polytope has width larger than three, but hollow polytopes with vertices not in the lattice can have larger width.
\end{remark}

\begin{proof}
	Throughout the proof we denote $\mu_i=\mu_i(K)$ and $\lambda_i=\lambda_i(K-K)$.

	We use the following slightly modified version of Equation~\eqref{eq:First}:
	\[
	\vol(K) \le \frac{1}{2^d} \vol(K-K)\le (\lambda_1\lambda_2\lambda_3)^{-1}\le \max\{{\lambda_1}^3, \lambda_1{\lambda_2}^2\}^{-1}.
	\]
	Our goal is to lower bound $\max\{{\lambda_1}^3, \lambda_1{\lambda_2}^2\}$. For this we combine
	equations \eqref{eq:AKW1} as follows:
	\[
	\lambda_2 \ge \mu_2-\mu_1  \ge \mu_3-\mu_1 -\lambda_1 \ge 1-\mu_1-\lambda_1,
	\]
	where $\mu_3\ge 1$ since $K$ is hollow.

	That is:
	\[
	\lambda_1\lambda_2\lambda_3\ge \max \{\lambda_1^3, \lambda_1\lambda_2^2\}
	\ge \max \{\lambda_1^3, \lambda_1(1-\mu_1-\lambda_1)^2\}.
	\]
	
	There are the following possibilities for the maximum on the right:
	\begin{itemize}
		\item If $\lambda_1 \ge (1-\mu_1)/2$ then either $1-\lambda_1-\mu_1$ is negative (and than smaller then $\lambda_1$ in absolute value, since $1-\mu_1$ is positive) or positive but smaller than $\lambda_1$. In both cases the maximum is  $\lambda_1^3$, which in turn is at least $(1-\mu_1)^3/8$.
		
		\item If $\lambda_1\le (1-\mu_1)/2$ then $1-\lambda_1-\mu_1$ is positive and bigger than $\lambda_1$, so the maximum is  $\lambda_1(1-\lambda_1-\mu_1)^2$. Now, by equations \ref{eq:AKW1} and \ref{eq:AKW2} we have
		\[
		\lambda_1 \ge \mu_3 - \mu_2 \ge 1 - (1+2/\sqrt{3})\mu_1,
		\]
		we take as lower bound for $\lambda_1(1-\lambda_1-\mu_1)^2$ the absolute minimum of $f(\lambda):=\lambda(1-\lambda-\mu_1)^2$ in the interval
		\[
		1 - (1+2/\sqrt{3}) \mu_1 \le  \lambda \le (1-\mu_1)/2.
		\]
		Since the only local minimum of $f$ is in $\lambda=1-\mu_1$, which is outside the interval, the minimum is achieved at one of the extremes. That is,
		\[
		f(\lambda) \ge \min\left\{ \left(1 - (1+2/\sqrt{3})\mu_1\right)\frac{4{\mu_1}^2}{3}, (1-\mu_1)^3/8\right \}.
		\]
		Now there are two things to take into account:
		\begin{itemize}
			\item For the interval to be non-empty we need
			\[
			1 - \mu_1(1+2/\sqrt{3})\le (1-\mu_1)/2,
			\]
			which is equivalent to
			\[
			{\mu_1}^{-1}\le 1+4/\sqrt{3}=3.31.
			\]
			\item Whenever ${\mu_1}^{-1}$ is between $\frac{2}{\sqrt{3}}(\sqrt{5}-1) +1 = 2.427$ and $1+4/\sqrt{3}=3.31$ we have
			\[
			\min\left\{ \left(1 - \mu_1(1+2/\sqrt{3})\right)\frac{4{\mu_1}^2}{3}, (1-\mu_1)^3/8\right\} = (1-\mu_1)^3/8,
			\]
			while for ${\mu_1}^{-1}\le \frac{2}{\sqrt{3}}(\sqrt{5}-1) +1 = 2.427$ the minimum is $(1 - \mu_1(1+2/\sqrt{3}))\frac{4{\mu_1}^2}{3}$.
		\end{itemize}
	\end{itemize}
\end{proof}

\subsection{The case of 5 points}
\label{sec:radon}

Observe that in the proof of Theorem~\ref{thm:Santos_I} what we bound is actually $\vol(K-K)$, and then use te Brunn-Minkowski inequality $\vol(K) \le \vol(K-K)/8$ to transfer the bound to $\vol(K)$. This means that with additional information on $K$ a sharper bound can be obtained. For example, if $K$ is a tetrahedron then we know $\vol(K) = \vol(K-K)/20$, which means that the bounds in the statement can all be multiplied by a factor of $8/20 = 0.4$. The case of interest to us in Section~\ref{sec:4hollow} is the somewhat similar case where $Q$ is a $3$-dimensional polytope expressed as the convex hull of five points (that is, either a tetrahedron, a square pyramid, or a triangular bipyramid). We now analyze this case in detail.

Most of what we want to say about this case is valid in arbitrary dimension, so let $A=\{a_1,\dots,a_{d+2}\}\subset \R^{d}$ 
be $d+2$ points affinely spanning $\R^d$. In particular, there is a unique (modulo a scalar factor)  vector $\lambda=(\lambda_1,\dots,\lambda_{d+2})$ such that
\[
\sum_{i=1}^{d+2} \lambda_i a_i =0.
\]
This naturally partitions $A$ into three subsets (of which only $A^0$ can be empty):
\[
A^+:=\{a_i : \lambda_i >0\},\quad 
A^0:=\{a_i : \lambda_i =0\}, \quad
A^-:=\{a_i : \lambda_i <0\}.
\]
Of course, $A^+$ and $A^-$ are interchanged when multiplying $\lambda$ by a negative constant, but $A^0$ and the partition of $A\setminus A^0$ into two parts are independent of the choice of $\lambda$. In fact:
\begin{enumerate}
\item $a_i\in A^0$ if, and only if, $A\setminus \{a_i\}$ is affinely dependent (equivalently, $A$ is a pyramid over $A\setminus \{a_i\}$).
\item $(A^+,A^-)$ is the only partition of $A\setminus A^0$ into two parts such that $\conv(A^+) \cap $ $\conv(A^-) \ne \emptyset$.
\item In fact, $\conv(A^+) \conv(A^-)$ is a single point. It is the unique point of $\R^d$ that can be expressed as a convex combination of each of two disjoint subsets of $A$. We call this point the \emph{Radon point} of $A$, since its existence and the partition of $A$ into three parts is basically Radon's theorem~\cite{Ziegler}.
\end{enumerate}

Observe that both $\conv(A^-)$ and $\conv(A^0\cup A^+)$ are simplices, by property (1) above, and that their affine spans are complementary: their dimensions add up to $d$ and they intersect only in the Radon point. By an affine change of coordinates, we can make the Radon point to be the origin, and the affine subspace containing  $\conv(A^-)$ and $\conv(A^0\cup A^+)$ be complementary coordinate subspaces. In these conditions, $\conv(A)$ is the \emph{direct sum} of $\conv(A^-)$ and $\conv(A^0\cup A^+)$, where the direct sum of polytopes $P\subset \R^p$ and $Q\subset \R^q$  both containing the origin is defined as 
\[
P\oplus Q:= \conv(P\times \{0\} \cup \{0\}\times Q) \subset \R^{p+q}.
\]

Since the volume of a direct sum has the following simple formula
\[
\vol(P\oplus Q) = \frac{\vol(P) \vol(Q)}{{p+q \choose p}},
\]
we have the following result:

\begin{lemma}
\label{lemma:Radon}
Let $A\subset \R^{d}$ be a set of $d+2$ points affinely spanning $\R^d$, and let $p=|A^-|-1$ and $q=|A^0\cup A^+|-1$, so that $p+q=d$. Let $K=\conv(A)$. Then:
\[
\vol(K-K) \ge {2p\choose p}{2q \choose q}\vol(K).
\]
\end{lemma}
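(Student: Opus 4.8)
The plan is to exploit the direct-sum structure of $K=\conv(A)$ established in the discussion preceding the lemma. Write $P:=\conv(A^-)$ and $Q:=\conv(A^0\cup A^+)$, which by property~(1) of the Radon partition are simplices of dimensions $p$ and $q$ respectively. After the affine change of coordinates placing the Radon point at the origin we have $K=P\oplus Q$, with $P$ and $Q$ living in complementary coordinate subspaces $\R^p$ and $\R^q$. Since the ratio $\vol(K-K)/\vol(K)$ is invariant under invertible affine maps (translations leave $K-K$ unchanged, and a linear map $T$ scales both $\vol(T(K)-T(K))=\vol(T(K-K))$ and $\vol(T(K))$ by the same factor $|\det T|$), there is no loss of generality in working in these coordinates.

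The key step is the containment
\[
(P-P)\oplus(Q-Q)\subseteq K-K.
\]
To prove it, first note that a generic point of $K=P\oplus Q$ can be written as $(au,bv)$ with $u\in P$, $v\in Q$ and $a,b\ge 0$, $a+b=1$. Now take any $w=u_1-u_2\in P-P$ and $z=v_1-v_2\in Q-Q$ (with $u_1,u_2\in P$ and $v_1,v_2\in Q$) and any $s,t\ge 0$ with $s+t=1$. Then both $(su_1,tv_1)$ and $(su_2,tv_2)$ lie in $K$, and their difference is $(sw,tz)$. As $s,t,w,z$ range over the allowed values, the points $(sw,tz)$ describe exactly $(P-P)\oplus(Q-Q)$, which yields the containment and hence $\vol(K-K)\ge\vol\big((P-P)\oplus(Q-Q)\big)$.

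It then remains to evaluate the right-hand side. Applying the direct-sum volume formula to $(P-P)\oplus(Q-Q)$, and using that $P-P\subset\R^p$ and $Q-Q\subset\R^q$ are full-dimensional, we get
\[
\vol\big((P-P)\oplus(Q-Q)\big)=\frac{\vol(P-P)\,\vol(Q-Q)}{\binom{p+q}{p}}.
\]
Since $P$ and $Q$ are simplices, the equality case of the Rogers--Shephard inequality in~\eqref{eq:voulme_of_difference} gives $\vol(P-P)=\binom{2p}{p}\vol(P)$ and $\vol(Q-Q)=\binom{2q}{q}\vol(Q)$. Finally, the same direct-sum formula applied to $K=P\oplus Q$ yields $\vol(P)\vol(Q)=\binom{p+q}{p}\vol(K)$. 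Substituting these three identities, the factor $\binom{p+q}{p}$ cancels and we obtain $\vol(K-K)\ge\binom{2p}{p}\binom{2q}{q}\vol(K)$, as claimed.

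The only genuinely nontrivial ingredient is the containment $(P-P)\oplus(Q-Q)\subseteq K-K$, and I expect that to be the main point requiring care; the remaining steps are direct substitutions into formulas already recorded in the excerpt. One should also keep in mind that the argument relies on $P$ and $Q$ being simplices (so that Rogers--Shephard is tight), but this is precisely what property~(1) of the Radon partition guarantees.
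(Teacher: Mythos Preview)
Your proof is correct and follows essentially the same route as the paper's: reduce to $K=P\oplus Q$ via the affine change of coordinates, use the containment $(P-P)\oplus(Q-Q)\subseteq K-K$, apply the direct-sum volume formula on both sides, and invoke the equality case of Rogers--Shephard for the simplices $P$ and $Q$. The only difference is that you spell out the containment argument explicitly, whereas the paper simply asserts it.
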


\begin{proof}
By an affine transformation, let $A^-\subset \R^p\times \{0\}$ and $A^0\cup A^+ \subset \{0\}\times \R^q$, and let $P\subset \R^p$ and $Q\subset \R^q$ be the corresponding convex hulls, which are simplices of dimensions $p$ and $q$ respectively. By the Rogers-Shephard inequality:
\[
\vol(P-P) = {2p\choose p}\vol(P), 
\qquad
\vol(Q-Q) = {2q\choose q}\vol(Q).
\]
Now, $K=P\oplus Q$ implies
\[
K-K= (P\oplus Q) - (P\oplus Q) \supseteq (P-P) \oplus (Q-Q). 
\]
In particular, $\vol(K-K)$ is at least
\[
\vol((P-P) \oplus (Q-Q)) = \frac{\vol(P-P) \vol(Q-Q)}{{p+q \choose p}} =
\frac{{2p\choose p} {2q\choose q}}{{p+q \choose p}}\vol(P)\vol(Q).
\]
On the other hand,
\[
\vol(K) = \vol(P\oplus Q) = \frac{\vol(P) \vol(Q)}{{p+q \choose p}}.
\]
\end{proof}

\begin{corollary}
\label{coro:Radon}
Let $K$ be the convex hull of five points affinely spanning $\R^3$. Then
\[
\vol(K-K) \ge 12\vol(K).
\]
\end{corollary}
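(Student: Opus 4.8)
The plan is to apply Lemma~\ref{lemma:Radon} directly, with $d=3$, and simply enumerate the possible values of the pair $(p,q)$ coming from the Radon partition of a set $A$ of five points affinely spanning $\R^3$. Recall that $p=|A^-|-1$ and $q=|A^0\cup A^+|-1$, with $p+q=d=3$. Since $A^-$ and $A^+$ are both nonempty (the Radon point lies in the relative interior of each), we have $|A^-|\ge 1$ and $|A^0\cup A^+|\ge 1$, so both $p\ge 0$ and $q\ge 0$; moreover, because $|A^+|\ge 1$ we have $|A^0\cup A^+|\ge 1$ and thus $q\ge 0$, while $p\ge 0$ as well. Up to swapping the roles of the two complementary simplices (which swaps $p$ and $q$), the only partitions of $3$ into two nonnegative integers are $(p,q)\in\{(0,3),(1,2)\}$.

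The key computation is then to evaluate the constant ${2p\choose p}{2q\choose q}$ in each case and take the minimum, since Lemma~\ref{lemma:Radon} gives $\vol(K-K)\ge {2p\choose p}{2q\choose q}\vol(K)$ for the specific partition realized by $K$, and we want a bound valid for all five-point configurations. First I would handle $(p,q)=(0,3)$: here ${0\choose 0}{6\choose 3}=1\cdot 20=20$, which corresponds to $K$ being a tetrahedron (one point is a pyramid apex over the other four, i.e.\ $A^-$ is a single point), recovering the Rogers--Shephard constant $\binom{6}{3}=20$ in dimension three. Next, for $(p,q)=(1,2)$ I compute ${2\choose 1}{4\choose 2}=2\cdot 6=12$, which corresponds to $K$ being a direct sum of a segment and a triangle, i.e.\ a triangular bipyramid or (degenerately) a square pyramid. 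Since $12<20$, the worst case—that is, the smallest guaranteed constant—is $12$, giving the stated bound $\vol(K-K)\ge 12\vol(K)$ in all cases.

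I expect the only genuine subtlety, rather than a real obstacle, to be checking that these two partition types exhaust all the geometric possibilities named in the text (tetrahedron, square pyramid, triangular bipyramid) and that the lemma applies uniformly. A tetrahedron arises precisely when $|A^0|=1$ (one redundant apex), forcing $|A^-|=1$ and hence $(p,q)=(0,3)$. A triangular bipyramid has Radon partition with $|A^-|=2$, $|A^+|=3$, $A^0=\emptyset$, giving $(p,q)=(1,2)$; a square pyramid has $|A^-|=2$, $|A^+|=2$, $A^0=\emptyset$ (the two diagonals of the square base cross at the Radon point), again $(p,q)=(1,2)$. Thus every five-point configuration falls into one of the two cases, the constant is always at least $12$, and the corollary follows immediately.
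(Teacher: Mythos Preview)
Your proof is correct and follows exactly the paper's approach: enumerate the two possible partitions $(p,q)\in\{(0,3),(1,2)\}$ (up to swapping $p$ and $q$), compute the binomial products $\binom{0}{0}\binom{6}{3}=20$ and $\binom{2}{1}\binom{4}{2}=12$, and take the minimum. One minor slip in your last paragraph: for the square pyramid the apex lies in $A^0$ (removing it leaves four coplanar, hence affinely dependent, points), so $|A^0|=1$, $|A^+|=|A^-|=2$---your listed sizes only add to four---but this still gives $(p,q)=(1,2)$ and does not affect the argument.
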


\begin{proof}
Since the $p$ and $q$ in Lemma~\ref{lemma:Radon} are non-negative and add up to three, there are only two possibilities: $(p,q)\in \{(0,3),(3,0)\}$ or  $(p,q)\in \{(1,2),(2,1)\}$.  The lemma gives 
$\vol(K-K) \ge 20\vol(K)$
and
$\vol(K-K) \ge 12\vol(K)$
respectively.
\end{proof}

We do not expect the factor $12$ in the statement of Corollary~\ref{coro:Radon} to be sharp, but it is not far from sharp; if $K$ is a pyramid with square base then $\vol(K-K) = 14 \vol(K)$

\begin{corollary}
	\label{coro:Santos_I}
	Let $K$ be the convex hull of five points affinely spanning $\R^3$, and assume it to be hollow.
	Let $w > 1+2/\sqrt{3}=2.155$  be the width of $K$ and let $\mu=w^{-1}$ be its first covering minimum.
	Then, 

		$\vol(K) $ is bounded above by
	\begin{eqnarray*}
	&&\frac{16}{3(1-\mu)^3} =\frac{16w^3}{3(w-1)^3}, \qquad
		  \text{if } w\ge \frac{2}{\sqrt{3}}(\sqrt{5}-1) +1 = 2.427, \text{ and}\\
	&&\frac{1}{{2{\mu}^2(1 - \mu(1+2/\sqrt{3}))}} = \frac{w^3}{2(w - (1+2/\sqrt{3}))},\qquad
		   \text{if } w\le  2.427.
	\end{eqnarray*}
	
\end{corollary}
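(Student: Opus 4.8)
The plan is to re-run the proof of Theorem~\ref{thm:Santos_I} almost verbatim, changing only its first inequality. As the remark opening this subsection points out, that proof really produces an upper bound on $\vol(K-K)$: Minkowski's Second Theorem~\eqref{eq:Minkowski2}, applied to the centrally symmetric body $C=K-K$, gives $\lambda_1\lambda_2\lambda_3\,\vol(K-K)\le2^3=8$, while the covering-minima estimates~\eqref{eq:AKW1}--\eqref{eq:AKW2} together with hollowness ($\mu_3\ge1$) supply the lower bound $\lambda_1\lambda_2\lambda_3\ge\max\{\lambda_1^3,\lambda_1\lambda_2^2\}$ whose case analysis drives the whole argument. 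None of this uses anything about $K$ beyond its being a hollow $3$-body of width $w=\mu^{-1}>1+2/\sqrt3$; in particular the threshold $w=2.427$ and both lower bounds on $\max\{\lambda_1^3,\lambda_1\lambda_2^2\}$ carry over unchanged.

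First I would observe that Theorem~\ref{thm:Santos_I} transfers its bound on $\vol(K-K)$ to one on $\vol(K)$ through the Brunn--Minkowski inequality $\vol(K)\le\vol(K-K)/2^3=\vol(K-K)/8$, i.e.\ the leftmost inequality of~\eqref{eq:First}. This is the only step that is not sharp for our $K$, and it is exactly the step we improve: since $K$ is the convex hull of five points affinely spanning $\R^3$, Corollary~\ref{coro:Radon} gives the stronger inequality $\vol(K)\le\vol(K-K)/12$.

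I would therefore replace the opening chain of the proof of Theorem~\ref{thm:Santos_I} by
\[
\vol(K)\le\frac1{12}\vol(K-K)=\frac{8}{12}\cdot\frac{\vol(K-K)}{8}\le\frac23\cdot\frac1{\lambda_1\lambda_2\lambda_3}\le\frac23\cdot\frac1{\max\{\lambda_1^3,\lambda_1\lambda_2^2\}},
\]
and then invoke verbatim the case analysis of that proof, which yields $\max\{\lambda_1^3,\lambda_1\lambda_2^2\}\ge(1-\mu)^3/8$ when $w\ge2.427$ and $\max\{\lambda_1^3,\lambda_1\lambda_2^2\}\ge\bigl(1-\mu(1+2/\sqrt3)\bigr)\tfrac{4\mu^2}{3}$ when $w\le2.427$. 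Substituting these gives $\vol(K)\le\frac23\cdot\frac8{(1-\mu)^3}=\frac{16}{3(1-\mu)^3}$ in the first case and $\vol(K)\le\frac23\cdot\frac3{4\mu^2(1-\mu(1+2/\sqrt3))}=\frac1{2\mu^2(1-\mu(1+2/\sqrt3))}$ in the second, which are precisely the two stated bounds. In a word, every bound of Theorem~\ref{thm:Santos_I} gets multiplied by the factor $8/12=2/3$.

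The one point demanding genuine care, rather than routine substitution, is verifying that Corollary~\ref{coro:Radon} applies here with no extra hypotheses. Its statement requires only that $K$ be the convex hull of five points affinely spanning $\R^3$, which is exactly our assumption; the underlying Radon partition and direct-sum decomposition make no reference to hollowness or to width. Consequently no real obstacle arises: the improvement amounts solely to swapping the Brunn--Minkowski factor $2^3=8$ for the Radon-based factor $12$, and I would not expect to reprove any successive- or covering-minima inequality, since all of those bound $\vol(K-K)$ and are insensitive to the finer structure of $K$.
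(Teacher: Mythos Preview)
Your proposal is correct and matches the paper's proof essentially verbatim: both replace the Brunn--Minkowski factor $1/8$ by $1/12$ via Corollary~\ref{coro:Radon} (the paper cites its parent Lemma~\ref{lemma:Radon}, but the content is the same), obtain $\vol(K)\le\frac23(\lambda_1\lambda_2\lambda_3)^{-1}$, and then defer to the case analysis of Theorem~\ref{thm:Santos_I} unchanged.
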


\begin{proof}
With the same notation as in the proof of Theorem~\ref{thm:Santos_I}, thanks to Lemma~\ref{lemma:Radon} we have
	\[
	\vol(K) \le \frac{1}{12} \vol(K-K)\le\frac23 (\lambda_1\lambda_2\lambda_3)^{-1}\le \frac23\max\{{\lambda_1}^3, \lambda_1{\lambda_2}^2\}^{-1}.
	\]
	In order to lower bound $\max\{{\lambda_1}^3, \lambda_1{\lambda_2}^2\}$ follow word by word the proof of Theorem~\ref{thm:Santos_I}.
\end{proof}

\section{Maximum volume of wide hollow $4$-simplices}
\label{sec:4hollow}

Throughout this section it is convenient to measure volumes \emph{normalized to the lattice}. That is, we use $\Vol(P):=d!\vol(P)$ for any $P\subset \R^d$, where $\vol$ denotes the Euclidean volume and $\Vol$ the normalized one. This makes all lattice polytopes to have integer volume, and the volume of a simplex be equal to its determinant.

Here we give an upper bound on the determinant (equivalently, the volume) of any hollow $4$-simplex of width at least three.
Our main idea is to consider an integer projection $\pi:P \rightarrow Q\subset\R^3$ and transfer to $P$ the bound for the volume of $Q$ that we have in Corollary~\ref{coro:Santos_I}. Observe that $Q$ is the convex hull of $5$ points and it has width at least three (because any affine integer functional on $Q$ can be lifted to $P$, with the same width) but it will not necessarily be hollow. Thus, some extra work is needed. 
A road map to the proof is as follows:

\begin{itemize}
\item If a projection $\pi$ exists for which $Q$ is hollow, then $Q$ is a hollow polytope of width at least three. Such polytopes have been classified in \cite{AKW,AWW}: there are only five, with maximum volume $27$. It is easy to prove (looking at the five possibilities) an upper bound of $27$ for the determinant of the simplex $P$. See details in Proposition~\ref{prop:hollow_projection}.
\item If such a $\pi$ does not exist, then we show that 
%a result of Averkov et al.~\cite{AKN} implies 
that ${\lambda_1}^{-1}(P-P) \le 42$ (see part (1) of Theorem~\ref{thm:bound}, based on Corollary~\ref{coro:onepoint}).
%\marginpar{Lemma~\ref{lemma:ca} is wrong\\ but the conclusion is\\ correct. See appen-\\dix}
We then have a dichotomy:
\begin{itemize}
\item If $Q$ is ``close to hollow'' (that is, if it contains a hollow polytope of about the same width) then we can still use Corollary~\ref{coro:Santos_I} to get a good bound on its volume, hence on the volume of $P$.
\item If $Q$ is ``far from hollow''  (that is, if it has interior lattice points far from the boundary) then 
it is easy to get much better bounds on ${\lambda_1}^{-1}(P-P)$, which by Minkowski's Theorem directly give us a bound on the volume of $P$.
\end{itemize}
\end{itemize}

\subsection{$P$ has a hollow projection $Q$}

We start with a general result about projections of a simplex to codimension one. Observe that if $P\subset\R^d$ is a (perhaps not-lattice) $d$-simplex and $\pi:P\to Q\subset \R^{d-1}$ is a projection of it then $Q=\pi(P)=\subset \R^{d-1}$ can be written as the convex hull of $d+1$ points, the images under $\pi$ of the vertices of $P$. In this situation we call \emph{Radon point of $Q$} the Radon point of $\pi(\vertices(P))$,  introduced in Section~\ref{sec:radon}.

\begin{lemma}
\label{lemma:radon}
Let $\pi:P\to Q\subset \R^{d-1}$ be an integer projection of a $d$-simplex $P$. 
Let $x\in Q$ be the Radon point of $Q$ and let $s=\pi^{-1}(x)\subset P$ be the fiber of $x$ in $P$ (a segment).
Then:
\begin{enumerate}
\item $\Vol(P) = \Vol(Q) \times \length(s)$, where $\length(s)$ is the lattice length of $s$.
\item $s$ maximizes the lattice length among all segments in $P$ in the projection direction.
\end{enumerate}
\end{lemma}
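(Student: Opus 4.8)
The plan is to set up coordinates adapted to the Radon decomposition of $Q$, exploit the direct-sum structure established in Section~\ref{sec:radon}, and then relate the fiber $s$ to a factor in that product. Let me think about what the two claims really say and how to attack them.

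\textbf{Understanding the setup.} We have a $d$-simplex $P$ with vertices $v_0,\dots,v_d$, and an integer projection $\pi:\R^d\to\R^{d-1}$ with $Q=\pi(P)=\conv(\pi(v_0),\dots,\pi(v_d))$. The images $\pi(v_0),\dots,\pi(v_d)$ are $d+1$ points affinely spanning $\R^{d-1}$, so they are exactly a Radon configuration as in Section~\ref{sec:radon} (with $d+1 = (d-1)+2$ points in dimension $d-1$). The Radon point $x$ has a representation as a convex combination of $\pi(A^+)$ and equally of $\pi(A^-)$; pulling these two representations back to $P$ gives two points on $P$ lying over $x$, and the segment $s=\pi^{-1}(x)\cap P$ joins them. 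My first task is to make this picture precise.

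\textbf{Proof of (2).} The key observation is that the fiber length $\length(\pi^{-1(y)}\cap P)$, as a function of $y\in Q$, is (the restriction to $Q$ of) a concave piecewise-linear function that vanishes on $\partial Q$; more precisely, it is $2\cdot(\text{min of the two affine functions giving upper and lower fiber endpoints})$. I want to argue that this function is maximized exactly at the Radon point $x$. The cleanest way is to use the direct-sum decomposition: after the affine change of coordinates from Section~\ref{sec:radon} placing the Radon point at the origin with $\conv(A^-)\subset\R^p\times\{0\}$ and $\conv(A^0\cup A^+)\subset\{0\}\times\R^q$, the simplex $P=\conv(A)$ becomes $P'\oplus Q'$ (direct sum). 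But I must be careful: this decomposition is of $P$ itself in dimension $d$, whereas $Q$ lives downstairs. I think the right move is to note that the projection direction (the kernel of $\pi$) is precisely the line through the two endpoints of $s$, i.e. the line joining the pullbacks of the $A^+$-combination and the $A^-$-combination. Over the Radon point, the fiber realizes the full ``diameter'' of $P$ in the kernel direction because the two convex-combination representations use complementary vertex subsets, so the endpoints are as far apart as a single fiber can be. For a point $y\ne x$, any fiber must lie strictly inside $P$ in that direction, giving strictly shorter length by concavity and boundary-vanishing. I expect I can make this rigorous by writing the endpoints of a general fiber as explicit convex combinations and comparing.

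\textbf{Proof of (1).} For the volume formula I would use Cavalieri / Fubini along the projection direction. Writing $\Vol(P)=d!\,\vol(P)$ and integrating the Euclidean fiber length over $Q$, the volume of $P$ equals $\int_Q \ell(y)\,dy$ times an appropriate lattice-normalization constant, where $\ell$ is the fiber length. Because $\pi$ is an integer projection, the lattice $\Z^d$ fibers compatibly over $\Z^{d-1}$ with the one-dimensional kernel lattice, so the normalized volumes multiply cleanly: $\Vol(P)=\Vol(Q)\cdot\length(s)$ provided $s$ is the fiber of the Radon point. Here the essential input is again the direct-sum structure, which gives $\vol(P\oplus Q)=\vol(P)\vol(Q)/\binom{p+q}{p}$; unwinding this together with the identification of $\length(s)$ as the lattice length of the full kernel-direction segment of $P$ over $x$ should yield the stated product formula. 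The normalized-volume bookkeeping (matching Euclidean integrals to lattice-normalized determinants via the index of the projected lattice) is exactly where the ``integer projection'' hypothesis is used.

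\textbf{Main obstacle.} I expect the real difficulty to be claim (2): proving that the Radon fiber is the \emph{globally} longest fiber, not merely a local maximum or a natural candidate. The subtlety is that ``longest fiber in the projection direction'' is a statement about $P$ and its kernel direction, while the Radon point is defined purely combinatorially via the affine dependence of the projected vertices; connecting these two requires showing that the unique direction-of-projection segment through $x$ hits $\partial P$ in two \emph{facets} rather than in a lower-dimensional face, and that every other fiber is properly contained in a translate of it. I anticipate that the direct-sum normalization from Section~\ref{sec:radon} does most of the work, since in those coordinates the kernel direction and the maximal segment become transparent, but transferring that clean picture back through the integer projection and verifying the lattice-length (as opposed to Euclidean-length) statements will need care.
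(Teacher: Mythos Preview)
Your plan diverges from the paper's argument, which is shorter and more elementary. The paper triangulates $Q$ by coning the Radon point $x$ to every facet $F$ of $Q$ not containing $x$. Each such $F$ is a simplex (its vertices are affinely independent) and $\pi$ restricts to a bijection $\pi^{-1}(F)\to F$, so each cone $S=\conv(F\cup\{x\})$ lifts to a $d$-simplex $\pi^{-1}(S)$ whose vertices are the two endpoints of $s$ together with the $d-1$ vertices of $P$ lying over $F$. On each such piece the fiber-length function is affine, equals $\length(s)$ at $x$, and vanishes on $F$; this simultaneously proves (2) and gives $\vol(\pi^{-1}(S))=\vol(S)\cdot\length(s)/d$, whence $\vol(P)=\vol(Q)\cdot\length(s)/d$ and the normalized formula follows.

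Your proposal has two genuine gaps. For part~(1), Cavalieri only gives $\vol(P)=\int_Q h(y)\,dy$, which is \emph{not} $\vol(Q)\cdot h(x)$ without knowing the precise shape of $h$; what is needed is exactly the piecewise-affine description of $h$ on the Radon triangulation, i.e.\ the paper's argument. Your appeal to the direct-sum volume formula from Section~\ref{sec:radon} is misplaced: that formula computes $\vol(Q)$ (and $\vol(Q-Q)$) from the Radon decomposition of $Q$, and says nothing about how $\vol(P)$ fibers over $\vol(Q)$. For part~(2), concavity of $h$ alone does not locate the maximizer; you still have to show the maximum occurs at $x$ and not elsewhere. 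Moreover your claim that $h$ vanishes on $\partial Q$ is false when $A^0\neq\emptyset$: then $x$ itself lies in the facet $\conv(A\setminus\{a_i\})$ of $Q$ (for $a_i\in A^0$), yet $h(x)=\length(s)>0$. So the boundary-vanishing-plus-concavity heuristic breaks down precisely in the cases you would need to handle, and one is led back to the facet-by-facet analysis the paper carries out.
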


\begin{proof}
Observe that every facet $F$ of $Q$ not containing $x$ is a simplex (because its vertices are affinely independent), and that $\pi$ is a bijection from $\pi^{-1}(F)$ to $F$. Let us consider $Q$ triangulated by coning $x$ to each of those facets. Let $S=\conv(F\cup\{x\})$ be one of the maximal simplices in this triangulation. Then, $\pi^{-1}(S)$ is also a simplex, with one vertex projecting to each vertex of $F$ and the segment $s$ projecting to $x$. This implies part (2) of the statement, and also the following analogue of part (1) for Euclidean (as opposed to normalized) volumes:
\[
\vol(P) = \vol(Q) \times \length(s) / d.
\]
From this, $\Vol(P) = d! \vol(P)$ and $\Vol(Q) = (d-1)! \vol(Q)$ gives part (1).
\end{proof}

\begin{proposition}
\label{prop:hollow_projection}
If an empty $4$-simplex $P$ of width at least three can be projected to a hollow lattice $3$-polytope $Q$, then the normalized volume of $P$ is at most $27$.
\end{proposition}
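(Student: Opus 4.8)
The plan is to exploit the classification of hollow $3$-polytopes of width $\ge 3$, of which there are only five (by~\cite{AKW,AWW}), and to combine it with Lemma~\ref{lemma:radon}. Given the hollow projection $\pi:P\to Q$, I would first invoke Lemma~\ref{lemma:radon}(1), which gives the clean factorization
\[
\Vol(P)=\Vol(Q)\times\length(s),
\]
where $s=\pi^{-1}(x)$ is the fiber over the Radon point $x$ of $Q$. This reduces the problem to two separate bounds: an upper bound on $\Vol(Q)$ and an upper bound on $\length(s)$. The first is immediate from the classification, since all five hollow $3$-polytopes of width $\ge 3$ have normalized volume at most $27$; in fact, since $P$ has width $\ge 3$ and width lifts through integer projections (any integer functional on $Q$ pulls back to one on $P$ of the same width), $Q$ must itself have width $\ge 3$, so it really is one of those five polytopes.

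The core of the argument is therefore to bound $\length(s)$. Here I would use the emptiness of $P$ together with Lemma~\ref{lemma:radon}(2), which says that $s$ realizes the maximal lattice length in the projection direction. The key observation is that $s$ projects to the interior point $x$ of $Q$: since $Q$ is hollow, $x$ is not a lattice point, but $s$ is a rational segment in the fiber direction. If $s$ had lattice length $\ge 1$, it would contain a lattice point in its relative interior (or the segment between two consecutive lattice points along the fiber would force an interior lattice point of $P$), contradicting that $P$ is empty. This should pin down $\length(s)$ to be strictly less than some small value; combined with the integrality constraints coming from the five explicit polytopes, one concludes $\Vol(P)=\Vol(Q)\cdot\length(s)\le 27$.

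I would carry out the steps in this order: (1) confirm $Q$ has width $\ge 3$ and hence is one of the five classified polytopes, recording that each has $\Vol(Q)\le 27$; (2) apply Lemma~\ref{lemma:radon} to get the product formula and the maximality of $\length(s)$; (3) use emptiness of $P$ to bound $\length(s)$, the crucial point being that an interior lattice point in the fiber over $x$ (or a lattice point forced in $P$ by a long fiber) contradicts emptiness; (4) multiply the two bounds and check case by case against the five polytopes that the product never exceeds $27$.

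The main obstacle I anticipate is step (3): making the emptiness argument on $\length(s)$ fully rigorous. The subtlety is that $x$ is merely an interior (non-lattice) point of $Q$, and a long fiber $s$ over $x$ need not itself contain a lattice point — rather, one has to argue that a nearby lattice fiber exists, or that the slab of $P$ lying over a neighborhood of $x$ must capture a lattice point once $\length(s)$ is too large. Handling the non-lattice location of the Radon point, and converting ``the fiber over $x$ is long'' into ``$P$ has an interior lattice point,'' is where the real care is needed; I expect one must examine the five polytopes individually, since the position of $x$ relative to the lattice differs among them and determines exactly how large $\length(s)$ can be before emptiness is violated.
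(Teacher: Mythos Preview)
Your overall plan matches the paper's: invoke the classification of hollow $3$-polytopes of width $\ge 3$, discard $Q_5$ (it has six vertices, too many to be the image of a $4$-simplex), apply Lemma~\ref{lemma:radon} to get $\Vol(P)=\Vol(Q)\cdot\length(s)$, and bound each factor. Where you go astray is in step~(3). You assert that ``since $Q$ is hollow, $x$ is not a lattice point,'' and then brace yourself for a delicate argument about long fibers over non-lattice points. But this assertion is wrong, and the truth is precisely what makes the proof easy rather than hard.

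In all four remaining cases $Q_1,\dots,Q_4$ the Radon point $x$ \emph{is} a lattice point. For the three tetrahedra $Q_1,Q_2,Q_3$ this is automatic: four of the projected vertices of $P$ are the vertices of $Q$, and the fifth projected vertex---necessarily a lattice point, since $\pi$ is an integer map---\emph{is} the Radon point. (Your implicit assumption that the Radon point lies in the interior of $Q$ is what fails; hollowness only forbids interior lattice points, and here $x$ sits on the boundary.) For the pyramid $Q_4$ the Radon point is the intersection of the two diagonals of the base quadrilateral, and one checks on the explicit polytope that this intersection happens to be a lattice point. Once $x$ is a lattice point, the fiber $s=\pi^{-1}(x)$ lies on a lattice line, and emptiness of $P$ immediately forces $\length(s)\le 1$: a longer $s$ would contain a lattice point of $P$ that is not a vertex. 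Multiplying by $\Vol(Q)\le 27$ finishes the proof with no further work.

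So the obstacle you anticipated does not exist; the one missing idea is exactly that $x$ is always a lattice point.
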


\begin{proof}
$Q$ is one of the five hollow lattice $3$-polytopes of width at least three, classified in~\cite{AKW,BlancoHaaseHoffmanSantos}. Figure~\ref{fig:max_hollow} shows the five of them.
\begin{figure}[htb]
\includegraphics[scale=.9]{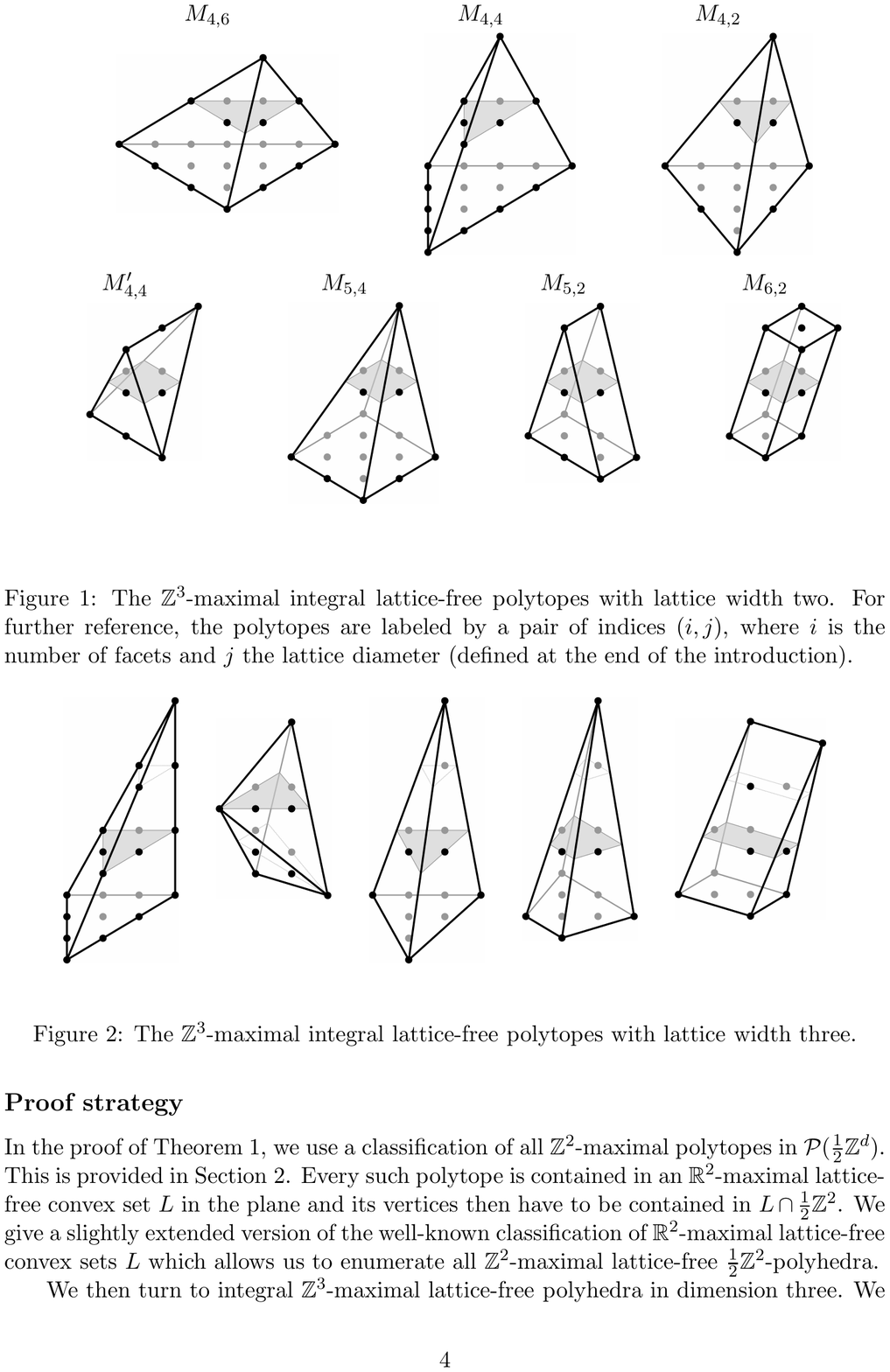}
\centerline{$
{Q}_{1} \qquad\qquad\quad
{Q}_{2} \qquad\qquad\quad
{Q}_{3} \qquad\qquad\quad
{Q}_{4} \qquad\qquad\quad
{Q}_{5} \quad
$}
\caption{The five hollow $3$-polytopes of width three. Figure taken from~\cite{AKW}}
\label{fig:max_hollow}
\end{figure}

Their normalized volumes are $27$, $25$, $27$, $27$, and $27$, respectively.
${Q}_{5}$ cannot be the projection of $P$, since it has six vertices. For the other four, we claim that the Radon point $x$ of $\pi(\vertices(P))$ is always a lattice point:

\begin{itemize}

\item In the three tetrahedra $Q_1$, $Q_2$ and $Q_3$ this is automatic: four vertices of $P$ project to the four vertices of the tetrahedron $Q$ and the fifth vertex projects to a lattice point in $Q$ which necessarily equals the Radon point.

\item In $Q_4$, a pyramid over a quadrilateral, the Radon point is the intersection of the two diagonals of the quadrilateral, which happens to be a lattice point.

\end{itemize}

Now, by Lemma~\ref{lemma:radon}, $\Vol(P) = \Vol(Q) \times \length(s)$, where $\length(s)$ is the lattice length of the fiber of the Radon point. Since the Radon point is a lattice point and since $P$ is hollow, the length of this fiber is at most $1$. On the other hand, $\Vol(Q)$ is at most $27$.
\end{proof}

\subsection{$P$ has no hollow projection}

Our first tool is Corollary~\ref{coro:onepoint} which guarantees that for every non-hollow lattice simplex $T$ and facet $F$ of $T$ there is a lattice point in the interior of $T$ not too close to $F$. The lower bound obtained is expressed in terms of the Sylvester sequence $(s_i)_{i\in \N}$. In particular, 
for dimension $4$ our bound is ${1}/(s_4-1)=1/{42}$.

\begin{lemma}
\label{lemma:onepoint}
Let $T$ be a simplex and let $S\subset T$ be a finite set of points including the vertices of $T$ and at least one point in the interior of $T$.

Then, for each vertex $a$ of $T$ there is a subsimplex $T'\subset T$ with exactly one point of $S$ in its interior and with $a\in \vertices(T')\subset S$.
\end{lemma}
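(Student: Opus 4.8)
The plan is to prove Lemma~\ref{lemma:onepoint} by an iterated barycentric-style subdivision that shrinks the simplex around the chosen vertex $a$ while keeping an interior point of $S$, until exactly one interior point survives. Concretely, I would argue by induction on the number $k$ of points of $S$ lying in the interior of the current simplex. The base case $k=1$ is trivial: take $T'=T$ itself, which already has $a$ as a vertex, has vertices in $S$, and has exactly one interior point of $S$. So assume $k\ge 2$ and that the statement holds for all simplices containing fewer than $k$ interior points of $S$.

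For the inductive step, fix an interior point $p\in S$ of $T$, with $p\ne a$ (possible since $k\ge2$ gives at least one interior point, and if the only interior point equaled $a$ we would be in the base case—but $a$ is a vertex, not interior, so any interior point is automatically distinct from $a$). The idea is to replace $T$ by a smaller subsimplex that still has $a$ as a vertex, still contains an interior point of $S$, but has strictly fewer interior points of $S$. To do this, consider the facet $F_a$ of $T$ opposite to $a$, and let $b$ be the point where the ray from $a$ through $p$ meets $F_a$. Then $T$ is subdivided by the point $p$: coning $p$ to each facet of $T$ gives $d+1$ subsimplices, and the point $a$ is a vertex of exactly those subsimplices that contain $a$, namely all of them except the one whose base is $F_a$. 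I would select the subsimplex $T''=\conv(\{p\}\cup(\vertices(T)\setminus\{a_j\}))$ for a suitable vertex $a_j\ne a$ so that $a\in\vertices(T'')$, $p$ becomes a vertex of $T''$, and the interior of $T''$ is a proper subset of the interior of $T$; hence $T''$ contains at most $k-1$ interior points of $S$ (the point $p$, formerly interior, is now a vertex and no longer counts). Applying the induction hypothesis to $T''$ with vertex $a$—after checking $T''$ still contains at least one interior point of $S$, or handling the case where it contains none—produces the desired $T'$.

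The delicate point, and what I expect to be the main obstacle, is guaranteeing two things simultaneously at each step: that the new subsimplex $T''$ still has at least one point of $S$ in its \emph{interior} (so the induction hypothesis applies), and that the count of interior points genuinely drops. The natural subdivision by a single interior point $p$ places $p$ on the boundary of every piece, which reliably decreases the interior count, but it may push \emph{all} remaining interior points of $S$ onto boundaries or into the wrong piece, leaving no interior witness. The clean way to finesse this is to choose $p$ more carefully: rather than an arbitrary interior point, take $p$ to be an interior point of $S$ that is ``extremal'' in a direction pointing away from $a$, e.g.\ a point maximizing the barycentric coordinate opposite to $a$ (equivalently, closest to the facet $F_a$), so that the subsimplex cut off toward $a$ retains interior points while the point $p$ itself lands on the separating hyperplane. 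Alternatively, one can avoid the subdivision-count bookkeeping entirely: pick $p\in S$ interior with minimal barycentric coordinate at $a$ among all interior points, replace the facet $F_a$ by the hyperplane through $p$ parallel to $F_a$, and let $T'$ be the resulting smaller simplex with apex $a$; by the extremality of $p$ this $T'$ has no interior points of $S$ strictly on the $a$-side of $p$, so $p$ is the unique interior point. This second route is cleanest: it produces $T'$ in one step, automatically has $a\in\vertices(T')$, has its other vertices determined by intersecting the edges from $a$ with the cutting hyperplane (so they need not be in $S$—and I should double-check whether the statement's requirement $\vertices(T')\subset S$ forces the subdivision approach instead). Reconciling $\vertices(T')\subset S$ with the cutting-hyperplane construction is precisely where the argument needs the most care, and I would resolve it by using the combinatorial coning subdivision (whose new vertices are all in $S$) together with the extremal choice of $p$ to control the interior count.
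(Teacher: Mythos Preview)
Your plan is essentially the paper's own approach: induction on the number of interior points of $S$, and at the inductive step a stellar subdivision at a carefully chosen interior point. You also correctly identify the right choice of that point---minimise the barycentric coordinate at $a$ (equivalently, pick the interior point of $S$ closest to the facet $F_a$). Your cutting-hyperplane alternative fails for exactly the reason you say: its new vertices need not lie in $S$.

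Where your write-up stops short is in specifying \emph{which} piece of the stellar subdivision to recurse into, and in justifying that choice. After starring at the extremal point $y$, there is no single ``subsimplex cut off toward $a$''; there are $d$ maximal cells containing $a$, and a second interior point $y'\in S$ might sit on a wall between two of them. The paper handles this cleanly: pick any other interior point $y'\in S$ and let $T'$ be the \emph{minimal} face of the stellar triangulation containing $y'$ (so $T'$ may have dimension strictly less than $\dim T$). Minimality forces $y'$ into the relative interior of $T'$, and the extremal choice of $y$ is what guarantees $a\in\vertices(T')$: the only cells of the subdivision not using $a$ lie inside $\conv(F_a\cup\{y\})$, and since $y'$ has $a$-coordinate at least that of $y$ (with equality only if $y'=y$), $y'$ cannot lie there. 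Finally $y$ is a vertex of $T'$, hence not in its interior, so the interior count strictly drops. This is the piece you would need to add; with it your argument becomes exactly the paper's proof.
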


Here $T$ and $T'$ are not assumed to be full-dimensional, or to have the same dimension. In particular, by ``interior''  we mean the relative interior. They are also not assumed to be lattice simplices unless $S$ is a set of lattice points (which is the case of interest to us).

\begin{proof}
If $T$ has a unique point of $S$ in its interior then there is nothing to prove, simply take $T'=T$ for every $a$.
If $T$ has more than one such point we argue by induction on the number of them. 
%Let $F$ be the facet of $T$ opposite to $F$

Let $y\in \text{int}(T)\cap S$ be an interior  point minimizing the barycentric coordinate with respect to $a$ and let $\mathcal T$ be the stellar triangulation of $T$ from $y$. (The maximal simplices in $\mathcal T$ are $\conv(F\cup \{y\})$ for the facets $F$ of $T$). Let $y'\in S$ be another interior  point in $T$ and let $T'$ be the minimal simplex in $\mathcal T$ containing $y'$. Then:
\begin{itemize}
\item By minimality of $T'$, $y'$ is in the interior of $T'$. In particular, $T'$ is not hollow.
\item $T'$ uses $a$ as a vertex, since $T'$ contains the interior point $y$ and all simplices of $\mathcal T$ that do not contain $a$ are contained in the boundary of $T$.
\item By construction, $\text{int}(T') \subset \text{int}(T)$ (remark: $T'$ may be not full-dimensional; by $\text{int}(T') $ we mean the relative interior). Since $y$ is an interior point  in $T$ but not in $T'$, $T'$ has less interior points than $T$ and we can apply the induction hypothesis to it.
\end{itemize}

\begin{figure}[ht]
\centerline{
\includegraphics[scale=0.9]{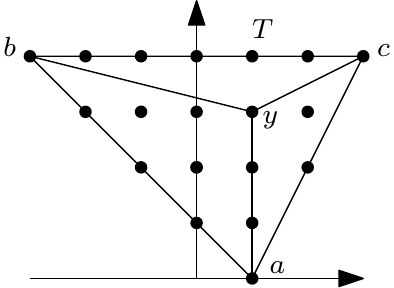}}
\caption{Illustration of Lema~\ref{lemma:onepoint}. Once the point $y$ is chosen, the other six points of $S$ (in the figure, a lattice point set) in the interior of $T$ are valid choices for $y'$. Depending of the choice of $y'$ we get a different $T'$: $T'=\{a,b,y\}$ if $y'$ is one of $(-1,3)$, $(0,3)$ or $(0,2)$; $T'=\{a,y\}$ if $y'$ is one of  $(1,1)$ or $(1,2)$; and $T'=\{a,c,y\}$ if $y'$ is $(2,3)$. Choice of $y$ guarantees that no interior point of $S$ lies in $\{y,b,c\}$.} 
\label{fig:onepoint}
\end{figure}

\end{proof}

\begin{corollary}
\label{coro:onepoint}
Let $T$ be a non-hollow lattice $d$-simplex and let $a$ be a vertex of it. Then, there is an interior lattice point in $T$ whose barycentric coordinate with respect to $a$ is at least $1/(s_{d+1}-1)$.
\end{corollary}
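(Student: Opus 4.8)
The plan is to reduce the statement to the case of a lattice simplex with a \emph{single} interior lattice point, whose barycentric coordinates obey a Sylvester-type lower bound, and then transport that bound back to $T$. First I would apply Lemma~\ref{lemma:onepoint} with $S=T\cap\Z^d$, the full set of lattice points of $T$; since $T$ is non-hollow, $S$ contains an interior point, so for the given vertex $a$ the lemma produces a subsimplex $T'\subseteq T$ with $a\in\vertices(T')\subseteq S$ and with exactly one point $y$ of $S$ in its relative interior. As observed in the proof of Lemma~\ref{lemma:onepoint}, $\operatorname{int}(T')\subseteq\operatorname{int}(T)$, so $y$ is genuinely an interior lattice point of $T$; it is the point we will exhibit. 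Write $k=\dim T'\le d$.

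Next I would pass the estimate from $T'$ back to $T$. If $\gamma_a$ denotes the barycentric coordinate of $y$ with respect to $a$ inside $T'$, then writing each vertex $w\in\vertices(T')$ in barycentric coordinates of $T$ (all nonnegative, and equal to $1$ in the $a$-slot when $w=a$) and expanding $y=\sum_{w}\gamma_w w$ shows that the barycentric coordinate of $y$ with respect to $a$ computed in $T$ equals $\sum_w\gamma_w\,\beta^w_a\ge\gamma_a\cdot 1=\gamma_a$. Hence it suffices to prove $\gamma_a\ge 1/(s_{k+1}-1)$, which by monotonicity of $s$ and $k\le d$ yields the claimed $1/(s_{d+1}-1)$. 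Since $\gamma_a$ is at least the smallest barycentric coordinate of $y$ in $T'$, everything reduces to the statement: \emph{the unique interior lattice point of a $k$-dimensional lattice simplex has all barycentric coordinates at least $1/(s_{k+1}-1)$.}

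To prove this I would translate so that $y=0$ and write $0=\sum_{i=0}^k\gamma_i v_i$ with $\sum\gamma_i=1$ and $\gamma_i\in(0,1)$, the $v_i\in\Z^k$ being the vertices. The first, clean step is a reflection argument bounding the \emph{largest} coordinate: if some $\gamma_j>1/2$, then the lattice point $2y-v_j=-v_j$ has barycentric coordinates $2\gamma_i$ for $i\ne j$ and $2\gamma_j-1$ for $i=j$, all of which lie in $(0,1)$ (here one uses that at most one coordinate can exceed $1/2$), so $-v_j$ would be a second interior lattice point, a contradiction. Thus $\max_i\gamma_i\le 1/2=1/s_1$. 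The full bound should then follow from the greedy/telescoping mechanism behind the identity $\sum_{i=1}^{k}1/s_i=1-1/(s_{k+1}-1)$, equivalently $1/s_i=1/(s_i-1)-1/(s_{i+1}-1)$: after controlling the largest coordinate by $1/s_1$, one reduces the dimension by projecting out the corresponding vertex, argues that the next coordinate is bounded by $1/s_2$ relative to the remaining mass, and iterates, so that the $j$ largest coordinates sum to at most $1-1/(s_{j+1}-1)$; taking $j=k$ leaves the smallest coordinate at least $1/(s_{k+1}-1)$.

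The main obstacle is precisely this inductive dimension reduction. Projecting $T'$ along the vertex of largest coordinate produces a $(k-1)$-dimensional lattice simplex $\bar T$ in the quotient lattice $\Z^k/(\Z^k\cap\R v_k)$ whose relative interior still contains the image of $y$, but a projection can collapse boundary or exterior lattice points onto the interior, so $\bar T$ need not inherit the ``single interior lattice point'' property for free. Making the reduction legitimate — either by choosing the projection so that no spurious interior points appear, or by replacing the induction with a single reflection-type argument that bounds all partial sums $\gamma^{(1)}+\dots+\gamma^{(j)}$ at once, or simply by invoking the classical bound of Hensley (and its refinements) on the barycentric coordinates of a one-interior-point lattice simplex — is the step that needs the most care; everything else is bookkeeping with the Sylvester recurrence.
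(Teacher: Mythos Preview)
Your approach is essentially the paper's: reduce via Lemma~\ref{lemma:onepoint} with $S=T\cap\Z^d$ to a lattice subsimplex $T'\ni a$ with a unique interior lattice point, then apply the Sylvester-type bound on the barycentric coordinates of that point. The paper simply cites \cite[Theorem~2.1]{AKN} for that last step (your third escape hatch, ``invoking the classical bound of Hensley and its refinements'') rather than attempting the inductive projection argument you correctly flag as delicate; your explicit transfer of the bound from $T'$ back to $T$ via $\sum_w\gamma_w\beta^w_a\ge\gamma_a$ is a useful detail the paper leaves implicit.
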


\begin{proof}
Let $S=T\cap \Z^d$ and  let $T'$ be as in Lemma~\ref{lemma:onepoint}. Observe that $\dim(T')\le \dim(T)$ so that  $s_{d'+1}\le s_{d+1}$. Since $T'$ has a unique interior lattice point, the statement is the case $i=d+1$ of \cite[Theorem 2.1]{AKN}.
\end{proof}

Our next result makes precise what we mean by ``far from hollow'' in the description at the beginning of this section, and how to use that property to upper bound the rational diameter of $P$ (and hence its volume, via Lemma~\ref{lemma:radon} and Corollary~\ref{coro:Santos_I}).

\begin{lemma}
\label{lemma:lambda_vs_r}
Let $P\subset\R^d$ be a hollow convex body. Consider an integer projection $\pi:P\to Q\subset \R^{d-1}$ of $P$.
Let $x\in Q$ be an arbitrary point and let $s_x=\pi^{-1}(x)\subset P$ be its fiber.

Then, $\length(s_x)^{-1} \ge 1-r$, where $\length(s)$ is the lattice length of $s$.
\end{lemma}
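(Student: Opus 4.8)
The plan is to reduce the whole statement to a one–variable concavity argument for the ``fiber-length function'' over $Q$. First I would fix coordinates adapted to the projection. Since $\pi$ is an integer projection to codimension one, its fibers are one-dimensional and its kernel is a rational line; after a unimodular change of coordinates I may assume that $\pi$ is the map $(x_1,\dots,x_d)\mapsto(x_1,\dots,x_{d-1})$, that $\ker\pi$ is spanned by the primitive vector $e_d$, and that $\pi(\Z^d)=\Z^{d-1}$. In these coordinates the fiber $s_x$ over a point $x\in Q$ is the vertical segment $\{x\}\times[\alpha(x),\beta(x)]$, where $\beta(x):=\max\{t:(x,t)\in P\}$ and $\alpha(x):=\min\{t:(x,t)\in P\}$, and its lattice length is exactly $h(x):=\beta(x)-\alpha(x)$, because the primitive lattice vector along the fiber is $e_d$.

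The two geometric inputs I would then isolate are the following. First, the function $h=\beta-\alpha$ is concave on $Q$: the upper boundary $\beta$ of the convex body $P$ is a concave function and the lower boundary $\alpha$ is convex, so their difference is concave. Second, over every interior lattice point $z\in\Z^{d-1}\cap\operatorname{int}(Q)$ the fiber has lattice length at most $1$, and this is precisely where hollowness enters. For such $z$ the relative interior $\{z\}\times(\alpha(z),\beta(z))$ of the fiber lies in $\operatorname{int}(P)$, the lattice points of the fiber line are exactly $\{z\}\times\Z$, and since $P$ is hollow none of them is interior to $P$; as any open interval of length greater than $1$ must contain an integer, we get $h(z)=\beta(z)-\alpha(z)\le 1$.

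With these two facts in hand the conclusion is immediate from the definition of $r$. Writing the relevant interior lattice point as $z=(1-r)\,x+r\,b$, with $b\in\partial Q$ and $r\in[0,1)$, concavity of $h$ gives $h(z)\ge (1-r)\,h(x)+r\,h(b)\ge (1-r)\,h(x)$, where I use only that $h(b)\ge 0$. Combining this with the hollowness bound $h(z)\le 1$ yields $(1-r)\,h(x)\le 1$, that is, $\length(s_x)^{-1}=h(x)^{-1}\ge 1-r$, as claimed. (In the intended application $x$ will be chosen so that an interior lattice point of $Q$ has barycentric coordinate $1-r\ge 1/(s_4-1)=1/42$ with respect to it, via Corollary~\ref{coro:onepoint}, which is exactly what turns this inequality into the bound $\length(s_x)\le 42$ on the rational diameter.)

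I expect the only genuinely delicate points to be bookkeeping rather than conceptual. The concavity of the envelopes $\alpha,\beta$ and the assertion that the relative interior of the fiber over an interior point of $Q$ lands in $\operatorname{int}(P)$ are standard facts about projections of convex bodies, but they should be stated carefully, since $P$ is assumed only to be a hollow convex body and not a lattice polytope or even a polytope. The main thing to get right is the interaction between ``interior of the fiber'' and ``interior of $P$'', so that hollowness can be invoked at the interior lattice points $z$ and nowhere else; once that step is clean, the single concavity inequality does all of the remaining work.
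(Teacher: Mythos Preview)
Your argument is correct and is essentially the same as the paper's: both use that hollowness of $P$ forces the fiber length over any interior lattice point of $Q$ to be at most~$1$, and then deduce $(1-r)\,\length(s_x)\le 1$ from concavity of the fiber-length function along the segment from $x$ through that lattice point to the boundary of $Q$. The paper phrases the concavity step as the ratio inequality $\length(s_x)/\length(s_y)\le |xz|/|yz|$ ``by convexity of $P$'', but this is exactly your inequality $h(z)\ge(1-r)h(x)+r\,h(b)$ in different notation.
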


\begin{proof}
Observe there if $Q$ is hollow then $r\ge1$, so the statement is trivial. Thus, without loss of generality we assume $Q$ is not hollow. Also, if $x$ is a lattice point in the interior of $Q$ then $r=0$ and the fact that $P$ is hollow implies $\length(s_x)\ge 1$, so the statement holds. Thus, we assume that $Q$ has at least an interior lattice point that is not $x$.

Let $y\in Q\setminus \{x\}$ be an interior lattice point of $Q$ closest to $x$ with respect to the seminorm induced by $Q$ with center at $x$. That is, for each interior lattice point $p$ in $Q$ call $||p||_{Q,x}$ the smallest dilation factor $r_p$ such that $p\in x+r_p(Q-x)$, and let $y$ be a lattice point minimizing that quantity. Observe that $r=||y||_{Q,x}$. (Remark: we do not assume $x$ to be in the interior of $Q$. If $Q$ lies in the boundary the seminorm $||p||_{Q,x}$ may be infinite for points outside $Q$, but it is always finite and smaller than one for points in the interior).

Let $s_y=\pi^{-1}(y)\subset P$ be the fiber of $y$. The length of $s_y$ must be at most $1$, because $P$ is hollow. 
Consider the ray from $x$ through $y$ and let $z$ be the point where it hits the boundary of $Q$. We have:
\[
\length(s_x) \le \frac{\length(s_x)}{\length(s_y)} \le \frac{|xz| }{ |yz|},
\]
where the second inequality follows from convexity of $P$.
Then:
\[
\length(s_x)^{-1} \geq \frac{|yz|}{|xz|} =\frac{|xz|-|xy|}{|xz|}= 1- \frac{|xy|}{|xz|} = 1-||y||_{Q,x} = 1-r.
\]
\end{proof}

With this we can prove our main result. In it, we consider the projection $\pi:P\to Q$  along the direction where $\lambda_1(P-P)$ is achieved. 
Equivalently, this is the direction of the longest (with respect to the lattice) rational segment contained in $P$. 
(Recall that this length is called the \emph{rational diameter} of $P$, and it equals 
$\lambda_1(P-P)^{-1}$). In particular, if we let the point $x$ of Lemma~\ref{lemma:lambda_vs_r} be the point whose fiber achieves the rational diameter, we have $\lambda_1(P-P)\ge 1-r$. Moreover, Lemma~\ref{lemma:radon} tells us that $x$ is the Radon point of $Q$ and that  $\Vol(P) = \Vol(Q)/\lambda_1(P-P)$.

\begin{theorem}
\label{thm:bound}
Let $P$ be a hollow $4$-simplex of width at least three and that does not project to a hollow $3$-polytope. Then:
\begin{enumerate}
\item $\lambda_1(P-P)\ge 1/42$. 
\item $\Vol(P) \le 5058$. 
\end{enumerate}
%Every hollow lattice $4$-simplex of width at least $3$ and determinant greater than $5058$ projects to a hollow $3$-polytope.
%In particular, there is no empty lattice $4$-simplex of width at least $3$ and determinant greater than $5058$.
\end{theorem}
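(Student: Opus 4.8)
The plan is to prove the two parts in sequence, using part (1) as an ingredient in part (2), and throughout to project $P$ along the direction realizing its rational diameter $\lambda_1(P-P)^{-1}$, sending $P$ onto $Q = \conv(\pi(\vertices(P)))$, a hollow-or-not $3$-polytope that is the convex hull of five points. By the remarks preceding the theorem, the chosen point $x$ is the Radon point of $Q$, we have $\lambda_1(P-P) \ge 1-r$ where $r = \|y\|_{Q,x}$ is the covering-type quantity from Lemma~\ref{lemma:lambda_vs_r}, and $\Vol(P) = \Vol(Q)/\lambda_1(P-P)$.

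\textit{Part (1).} The goal is the lower bound $\lambda_1(P-P) \ge 1/42$. Suppose first that $Q$ is hollow; but this is excluded by hypothesis, so $Q$ has an interior lattice point. First I would apply Corollary~\ref{coro:onepoint} to the non-hollow lattice simplex $Q$ (or rather to a suitable non-hollow lattice subsimplex of $Q$ on five points) with respect to a chosen vertex: in dimension $d=3$ the Sylvester number gives $s_{d+1}-1 = s_4-1 = 42$, so there is an interior lattice point of $Q$ whose barycentric coordinate with respect to that vertex $a$ is at least $1/42$. The key geometric translation is that this barycentric lower bound forces the quantity $r = \|y\|_{Q,x}$ to satisfy $r \le 1 - 1/42 = 41/42$, because an interior lattice point that is a ``deep'' convex combination cannot be too close (in the $Q$-seminorm centered at $x$) to the boundary. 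Combining $r \le 41/42$ with the inequality $\lambda_1(P-P) \ge 1-r$ from Lemma~\ref{lemma:lambda_vs_r} then yields $\lambda_1(P-P) \ge 1/42$.

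\textit{Part (2).} Here I would set up the dichotomy sketched at the start of Section~\ref{sec:4hollow}, splitting on the size of $r$ relative to the width threshold. Write $w \ge 3$ for the width of $Q$ (which is at least the width of $P$, since any integer functional lifts without changing width) and $\mu = w^{-1}$ its first covering minimum. In the ``far from hollow'' regime, where $r$ is bounded well away from $1$, part (1)'s reasoning gives a much stronger lower bound on $\lambda_1(P-P)$, and then Minkowski's Second Theorem~\eqref{eq:Minkowski2}, in the form of~\eqref{eq:First} applied to the difference body $P-P$ in dimension $4$, directly bounds $\Vol(P)$. In the ``close to hollow'' regime, $Q$ contains (or is close to) a hollow five-point $3$-polytope of width essentially $w \ge 3$, so Corollary~\ref{coro:Santos_I} bounds $\Vol(Q)$ from above (for $w = 3$ this bound is concrete: $\tfrac{16 w^3}{3(w-1)^3} = \tfrac{16\cdot 27}{3\cdot 8} = 18$), and then $\Vol(P) = \Vol(Q)/\lambda_1(P-P) \le \Vol(Q)\cdot 42$ via part (1) gives the bound. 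The hard part will be calibrating the crossover between the two regimes so that the worst case across all $r$ and all admissible $w \ge 3$ stays below $5058$: one must optimize the product $\Vol(Q)\cdot \lambda_1(P-P)^{-1}$ where improving one factor degrades the other, and push the Corollary~\ref{coro:Santos_I} bound together with the $1-r$ estimate to their joint extreme. I expect the value $5058$ to emerge precisely from balancing these competing bounds at the threshold width $w=3$, so the main obstacle is the careful quantitative optimization rather than any new structural idea.
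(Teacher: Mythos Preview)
Your plan for Part~(2) is broadly the paper's: split on whether $\lambda:=\lambda_1(P-P)$ is large (then Rogers--Shephard plus Minkowski bounds $\Vol(P)$ directly) or small (then shrink $Q$ about $x$ by factor $r\ge 1-\lambda$ to a \emph{hollow} five-point body $Q_r$, apply Corollary~\ref{coro:Santos_I} to $Q_r$, and use $\Vol(P)=\Vol(Q)/\lambda$). One point you gloss over: Corollary~\ref{coro:Santos_I} is applied to $Q_r$, whose width is only $3r\ge 3(1-\lambda)$, not $3$; this is what pushes the bound up to $5058$ rather than the $\sim 4500$ your width-$3$ estimate suggests.

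Part~(1), however, has a genuine gap. Corollary~\ref{coro:onepoint} produces an interior lattice point whose barycentric coordinate is $\ge 1/42$ \emph{with respect to a lattice vertex $a$} of some lattice subsimplex of $Q$. But $r=\|y\|_{Q,x}$ is measured from the Radon point $x$, which is typically \emph{not} a lattice point and not a vertex of any lattice simplex you can feed into the corollary. A point can have large $a$-coordinate yet sit arbitrarily close to $\partial Q$ in the seminorm centred at $x$ (e.g.\ near an edge through $a$), so ``deep with respect to $a$'' does not imply ``$r\le 41/42$''. The paper closes this gap by a case split on the Radon tetrahedra $\conv(F\cup\{x\})$ triangulating $Q$ from $x$:
\begin{itemize}
\item If some lattice tetrahedron $Q_i$ has all its interior lattice points of $Q$ in a \emph{single} Radon tetrahedron $\conv(F\cup\{x\})$, then one can choose $a=v_j$ to be the vertex of $Q_i$ opposite $F$; since $x$ lies on the segment from $v_j$ into $F$, the $v_j$-barycentric coordinate of the point from Corollary~\ref{coro:onepoint} transfers to an (at least as large) $x$-barycentric coordinate in $\conv(F\cup\{x\})$, giving $\lambda\ge 1/42$.
\item Otherwise at least four Radon tetrahedra contain interior lattice points of $Q$; each such tetrahedron has $\Vol\ge 1/\lambda$, so $\Vol(Q)\ge 4/\lambda$. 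Combining this with the upper bound on $\Vol(Q)$ coming from Corollary~\ref{coro:Santos_I} (applied to $Q_r$) forces $\lambda\gtrsim 0.032>1/42$.
\end{itemize}
Your sketch tries to do everything via the first mechanism, but the transfer of barycentric coordinates from $a$ to $x$ only works in that first case; the second case needs the entirely different volume-counting argument, which your proposal does not contain.
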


%\begin{lemma}
%\label{lemma:parche}
%Let $P$ be a hollow $4$-simplex that does not project to a hollow $3$-polytope. Then $\lambda_1(P-P)\ge 1/42$. 
%\end{lemma}

\begin{proof}
Let $Q$ be the projection of $P$ along the direction where  $\lambda_1(P-P)$ is attained. We know $Q$ is not hollow, and has width at least three. For the rest of the proof we denote $\lambda:=\lambda_1(P-P)$. 

Suppose first that $\lambda\ge 0.19$, in which case part (1) obviously holds. For part (2) we can simply bound the volume of $P$ in the manner of Equation~\eqref{eq:First}, except that for a $d$-simplex $P$ we can use the Rogers-Shephard equality, see Eq.~\eqref{eq:voulme_of_difference}:
\[
\vol(P-P) = {2d \choose d} \vol(P).
\]
Thus:
\[
\Vol(P) = 24 \vol(P) = \frac{24}{70} \vol(P-P) 
  \le \frac{24\cdot 16}{70 {\lambda}^4} 
  \le \frac{24\cdot 16}{70 \cdot {0.19}^4} \le 4209.38.
\]

So, {\it for the rest of the proof we assume $\lambda<0.19$.} 

Let $x$ be the Radon point of $Q$, that is, the image of the segment where $\lambda$ is achieved. Let $r$ be as in Lemma~\ref{lemma:lambda_vs_r}, so that $r\ge 1 -\lambda\geq 0.81$
and $Q_r:=x + r(Q-x)$ is hollow. Together with Lemma~\ref{lemma:radon} this gives:

\begin{equation}
\Vol(Q)
%=\frac{\Vol(Q)}{\lambda}
=\frac{\Vol(Q_r)}{r^3}
\le\frac{6\vol(Q_r)}{(1-\lambda)^3}.
\label{eq:Theorem}
\end{equation}

Observe that the width of $Q_r$ is $r$ times the width of $Q$ and, in particular, 
$w(Q_r)\geq 3\cdot 0.81= 2.43$. Since $Q_r$ is the convex hull of five points (the projection of the five vertices of $P$), Corollary~\ref{coro:Santos_I} gives
\begin{eqnarray}
\label{eq:Theorem2}
\vol(Q_r) \le 
\frac{16}{3(1-\mu_1(Q_r))^3} %= 
%\frac{16}{3\left(1-\frac{\mu_1(Q)}r\right)^3}
\le \frac{16}{3\left(1-\frac1{3(1-\lambda)}\right)^3}
=  \frac{2^4 3^2(1-\lambda)^3}{(2-3\lambda)^3},
\end{eqnarray}
where the inequality in the middle follows from $\mu_1(Q)\le 1/3$ ($Q$ has width at least three) and
$\mu_1(Q_r) =  \mu_1(Q)/r \le \mu_1(Q)/(1-\lambda)$.
% and $r\ge 1-\lambda$.

Consider $Q$ triangulated centrally from the Radon point $x$. That is, for each facet $F$ of $Q$ not containing $x$  we consider the tetrahedron $\conv(F\cup \{x\})$. (Observe that all facets of $Q$ not containing $x$ are triangles, since the only affine dependence among the vertices of $Q$ is precisely the one that gives the Radon point). We call such tetrahedra the \emph{Radon tetrahedra} in $Q$.

Let $v_1,\dots,v_5$ be the five vertices of $Q$. (Remark: if $Q$ has only four vertices then the Radon point is the projection of the fifth vertex of $P$. But the Radon point being integer implies $\lambda\ge 1$). For each $i\in \{1,\dots,5\}$ denote $Q_i:= \conv(\{v_1,\dots, v_5\}\setminus \{v_i\})$ the lattice tetrahedron contained in $Q$ and not using vertex $v_i$.

There are two possibilities:

\begin{itemize}
\item Suppose first that one of the $Q_i$'s has the following property: only one of the Radon tetrahedra contained in that $Q_i$ contains interior lattice points of $Q$. Let $T:=\conv(F\cup \{x\})$ be that Radon tetrahedron. 
%Observe that the points are necessarily in the interior of $\conv(F\cup \{x\})$ (otherwise they would be shared with another tetrahedron).
Let $T':=\conv(F'\cup \{x\})$ be a minimal face of $T$ that contains some interior lattice point $y$ of $Q$. Then:
Minimality of $F'$ implies that $y$ is in the interior of $T'$, by minimality.

Let $v_j\not\in T$ be the vertex of $Q_i$ not in $F$ and let $Q'_i= \conv(F'\cup \{v_j\})$. 
Observe that $\conv(F'\cup \{x\})\subset \conv(F'\cup \{v_j\})$ (because the Radon point of $Q$ is contained in every simplex spanned by vertices of $Q$ and intersecting the interior of $Q$). 

By Corollary~\ref{coro:onepoint}  the non-hollow lattice simplex $\conv(F'\cup \{v_j\})$ contains an interior point $z$ whose barycentric coordinate with respect to the facet $F'$ is at least $1/42$. This is the same as the barycentric coordinate of $z$ in $T$ with respect to the facet $F$.
Now,  by uniqueness of $\conv(F\cup \{x\})$ as a Radon tetrahedron in $T$, $y$ is also contained in $\conv(F\cup \{x\})$. Moreover,  the barycentric coordinate of $y$ in $\conv(F\cup \{x\})$, which is a lower bound for $\lambda$ by the same arguments as in Lemma~\ref{lemma:lambda_vs_r}, is greater than in $\conv(F\cup \{a\})$. Thus, $\lambda \ge 1/42$.

\item Suppose now that every $Q_i$ contains either zero or at least two Radon tetrahedra with interior lattice points of $Q$. An easy case study shows that then at least four Radon tetrahedra contain interior lattice points of $Q$. (The cases are that $Q$ is a triangular bipyramid with six Radon tetrahedra or a quadrangular pyramid with four Radon tetrahedra).
Let $\conv(F_i\cup \{x\})$  be such Radon tetrahedra, and let $y_i\in \conv(F_i\cup \{x\})$ be an interior lattice point of $Q$, for $i\in \{1,2,3,4\}$. (Some of the $y_i$'s may coincide, since we do not assume them to be interior in $\conv(F_i\cup \{x\})$, only in $Q$).
%are at least two facets $F_1$ and $F_2$ such that $\conv(F_i\cup \{x\})$ contains and interior point $y_i$ of $Q$ (the interior point may be the same or different).
Then, $1/\lambda$ is smaller than $\frac{\Vol(\conv(F_i\cup \{x\}))}{\Vol(\conv(F_i\cup \{y_i\}))}$, for each $i$, and this is smaller than $\Vol(\conv(F_i\cup \{x\}))$ since $\conv(F_i\cup \{y_i\})$ is a lattice tetrahedron. That is:
\[
\Vol(Q) \ge \sum_{i=1}^4 \Vol(\conv(F_i\cup \{x\}))  \ge \frac4\lambda.
\]
This together with equations \eqref{eq:Theorem} and \eqref{eq:Theorem2} gives%
\[
 \frac4\lambda \le \Vol(Q) \le  \frac{2^5 3^3}{(2-3\lambda)^3},
\]
which implies $(\frac23-\lambda)^3\le 8 \lambda$ or, equivalently, $\lambda \ge 0.03196 > 1/42$. 
\end{itemize}

So, in both cases we have $\lambda\ge 1/42$, which finishes the proof of part (1). 
%\end{proof}

%\begin{theorem}
%\label{thm:bound}
%Every hollow lattice $4$-simplex of width at least $3$ and determinant greater than $5058$ projects to a hollow $3$-polytope.
%%In particular, there is no empty lattice $4$-simplex of width at least $3$ and determinant greater than $5058$.
%\end{theorem}

%\marginpar{This proof is incom-\\plete, since it uses\\ Lemma~\ref{lemma:ca}. See\\ a corrected proof in\\ the appendix}

%\begin{proof}

%Let $P$ be a hollow $4$-simplex that does not project to a hollow $3$-polytope. Let
% $Q$ be its projection through the direction where the $\lambda_1(P-P)$ is attained. We call 
%$\lambda:=\lambda_1(P-P)$. We know $Q$ is not hollow.
%
%
%If $\lambda \ge 0.19$, then we can simply bound the volume of $P$ in the manner of Equation~\eqref{eq:First}, except that for a $d$-simplex $P$ we can use the Rogers-Shephard equality, see Eq.~\eqref{eq:voulme_of_difference}:
%
%\[
%\vol(P-P) = {2d \choose d} \vol(P).
%\]
%Thus:
%\[
%\Vol(P) = 24 \vol(P) = \frac{24}{70} \vol(P-P) 
%  \le \frac{24\cdot 16}{70 {\lambda}^4} 
%  \le \frac{24\cdot 16}{70 \cdot {0.19}^4} \le 4209.38.
%\]

For part (2),  Equations~\eqref{eq:Theorem} and~\eqref{eq:Theorem2} give
%
%
%Let $x$ be the image of the segment where the rational diameter of $P$ is achieved and let $r$ be as in Lemma~\ref{lemma:lambda_vs_r}, so that $r\ge 1 -\lambda\geq 0.81$
%and $Q_r:=x + r(Q-x)$ is hollow. Together with Lemma~\ref{lemma:radon} this gives:
%
%\begin{equation}
%\Vol(P)
%=\frac{\Vol(Q)}{\lambda}
%=\frac{\Vol(Q_r)}{r^3\lambda}
%\le\frac{6\vol(Q_r)}{(1-\lambda)^3\lambda}.
%\label{eq:Theorem3}
%\end{equation}
%
%
%Observe that the width of $Q_r$ is $r$ times the width of $Q$ and, in particular, 
%$w(Q_r)\geq 3\cdot 0.81= 2.43$. Since $Q_r$ is the convex hull of five points (the projection of the five vertices of $P$), Corollary~\ref{coro:Santos_I} gives
%\[
%\vol(Q_r) \le \frac{16}{3(1-\mu_1(Q_r))^3} 
%= \frac{16}{3\left(1-\frac{\mu_1(Q)}r\right)^3}
%\le \frac{16}{3\left(1-\frac1{3(1-\lambda)}\right)^3}
%=  \frac{2^4 3^2(1-\lambda)^3}{(2-3\lambda)^3},
%\]
%where the inequality in the middle follows from $\mu_1(Q)\le 1/3$ ($Q$ has width at least three) and $r\ge 1-\lambda$.
%
%Putting this together with Equation~\eqref{eq:Theorem} gives
%
\begin{equation}
\label{eq:vol_lambda}
\Vol(P) = \frac{\Vol(Q)}{\lambda} \le \frac{2^53^3}{(2-3\lambda)^3\lambda}.
\end{equation}

\begin{figure}[ht]
\centerline{
\includegraphics[scale=0.6]{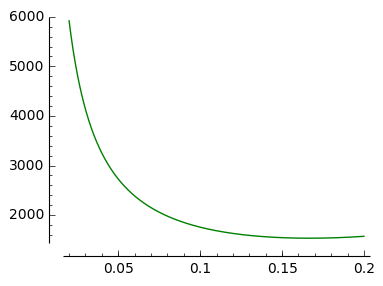}}
\caption{Plot of  the upper bound in Eq.~\eqref{eq:vol_lambda}
%$f(\lambda)=\frac{2^53^3}{(2-3\lambda)^3\lambda}$ 
for $\lambda\in [0.02,0.20]$.}
\label{fig:plot2}
\end{figure}

%Now, \sout{by Lemma~\ref{lemma:ca} we know that} $\lambda \ge 1/42$.
%\marginpar{Lemma~\ref{lemma:ca} is wrong\\ but the conclusion is\\ correct. See appen-\\dix}
Figure~\ref{fig:plot2} plots this function in the relevant range $\lambda\in [1/42, 0.19]$. Although the function is slightly increasing after its local minimum at $\lambda=1/6$, its maximum in the interval is clearly at $\lambda= 1/42$, where it takes the value
\[
\Vol(P)\le 
\frac{2^5\,3^3\, 42}{(2-1/14)^3} = 
\frac{14^3\,2^5\,3^3\, 42}{27^3}=
\frac{2^9\,7^4}{3^5}= 5058.897.
\]
We can round this down to $5058$ since $\Vol(P)\in \Z$.
%
%\chn
\end{proof}

%\paco{(The statement of) this corollary is a bit awkward}
%\begin{corollary}
%There is no empty $4$-simplex of width greater than $4$, and there is only one of width $4$. And it is %described explicitly in the list of Haase and Ziegler.
%\end{corollary}

\section{Enumeration of empty $4$-simplices for a given determinant}
\label{sec:enumeration}

Let $P$ be a lattice simplex $P\in \R^d$, and let $\Lambda(P)$ be the lattice generated by vertices of $P$. We assume with no loss of generality that the origin is a vertex of $P$, so that $\Lambda(P)$ is a linear lattice and $G(P):=\Z^d/\Lambda(P)$ is a finite group of order equal to the determinant of $P$. One way to store $P$ is via generators of $G(P)$ as a subgroup of $\R^d/\Lambda(P)$, with barycentric coordinates used in $\R^d$. Let us be more precise:

\begin{itemize}
\item The barycentric coordinates of a point $x\in \R^d$ with respect to the simplex $P$ is the vector $(x_0,\dots,x_d)$ of coefficients
of the unique expression of $x$ as an affine combination of the vertices of $P$ (the vertices of $P$ are assumed given in a particular order). They add up to one; conversely, any $(d+1)$-vector with real coefficients and sum of coordinates equal to one represents a unique point in $\R^d$ in barycentric coordinates. If $x$ is a lattice point and $P$ a lattice simplex of determinant $D$, then all the $x_i$'s lie in $\frac{1}{D} \Z$.

\item Looking at $x$ in the quotient $\R^d/\Lambda(P)$ is equivalent to looking at the $x_i$'s modulo $\Z$; that is, looking only at the fractional part of them. In particular, every lattice point $u\in \Z^d$, considered as an element of the quotient $\Z^d/\Lambda \subset \R^d/\Lambda$, can be represented as a vector $(u_0,\dots,u_d) \in (\Z_D)^{d+1}$ with sum of coefficients equal to $0$ modulo $D$.

\item In this manner, to every lattice simplex $P$ of determinant $D$ we associate a subgroup $G(P)$ of order $D$ of 
the group
\[
\T_D^d :=\{(u_0,\dots,u_d)\in \Z_D^{d+1} : \sum u_i=0 \pmod D\}.
\]
\end{itemize}

We call $\T_D^d$ the discrete $d$-torus of order $D$, since it is isomorphic to $(\Z/D\Z)^d$.
In this setting we have:

\begin{lemma}
\label{lemma:subgroups}
Let $G_1$ and $G_2$ be two subgroups of order $D$ of $\T_D^d$. Then, $G_1$ and $G_2$ represent equivalent simplices of determinant $D$ if, and only if, they are the same subgroup modulo permutation of coordinates.
\end{lemma}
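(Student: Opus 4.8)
The plan is to realize the assignment $P \mapsto G(P)$ of the preceding discussion as a bijection between ordered lattice $d$-simplices of determinant $D$ up to unimodular equivalence and subgroups of order $D$ of $\T_D^d$, and then to check that forgetting the vertex ordering corresponds exactly to the action of $S_{d+1}$ by permutation of coordinates. First I would record the two equivariance properties of $G(P)$, both immediate from the fact that barycentric coordinates are an affine invariant. (i) If $\Psi:\R^d\to\R^d$ is affine and unimodular, then a lattice point $u$ and its image $\Psi(u)$ have the same barycentric coordinates with respect to $P$ and $\Psi(P)$; since $\Psi$ is a bijection of $\Z^d$, the two sets of coordinate vectors agree and $G(\Psi(P))=G(P)$ (with the induced matching of vertex orderings). (ii) If the vertices of $P$ are relabeled by $\sigma\in S_{d+1}$, every barycentric coordinate vector is permuted by $\sigma$, so $G(P)$ is replaced by $\sigma\cdot G(P)$. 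Property (i) already shows that $G(P)$ is a well-defined invariant of the unimodular class of the ordered simplex.

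Next I would prove that $G(P)$ determines the ordered simplex up to unimodular equivalence, which gives injectivity. Work in $H=\{x\in\R^{d+1}:\sum x_i=1\}$ with standard simplex $\Delta=\conv(e_0,\dots,e_d)$ and linear part $H_0=\{\sum x_i=0\}$, $\Lambda_0:=\Z^{d+1}\cap H_0$. The barycentric map $\beta:\R^d\to H$ sending $v_i\mapsto e_i$ carries $\Z^d$ to an affine lattice $M=e_0+M_0$ whose difference lattice satisfies $\Lambda_0\subseteq M_0\subseteq \tfrac1D\Lambda_0$ with $[M_0:\Lambda_0]=D$. Under the isomorphism $(\tfrac1D\Lambda_0)/\Lambda_0\cong\T_D^d$ given by $\tfrac1D z\mapsto z\bmod D$, the image of $M_0/\Lambda_0$ is precisely $G(P)$; hence $M_0$ is the full preimage of $G(P)$ and is determined by $G(P)$ alone, and so is $M=e_0+M_0$. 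Consequently, if $G(P_1)=G(P_2)$ then the corresponding lattices $M$ coincide, and $\beta_2^{-1}\circ\beta_1$ is an affine unimodular self-map of $\R^d$ carrying $P_1$ to $P_2$ vertex by vertex, so $P_1\cong P_2$. Running this in reverse — take any unimodular identification of $(H,M)$ with $(\R^d,\Z^d)$, where $M$ is the preimage of a prescribed order-$D$ subgroup $G$ — realizes every such $G$ as some $G(P)$; this surjectivity is what legitimizes speaking of ``the simplex represented by $G$''.

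With these facts the lemma is short. For the forward implication, assume the simplices $P_1,P_2$ with $G(P_i)=G_i$ are unimodularly equivalent via $\Psi$. Then $\Psi$ maps the vertices of $P_1$ onto those of $P_2$ by some permutation $\pi$, so $\Psi(P_1)$ is $P_2$ with vertices relabeled by $\pi$; property (i) gives $G_1=G(\Psi(P_1))$ and property (ii) gives $G(\Psi(P_1))=\pi\cdot G_2$, whence $G_1=\pi\cdot G_2$. Conversely, if $G_1=\sigma\cdot G_2$, relabel the vertices of $P_2$ by $\sigma$ to obtain the same polytope $P_2'$; by (ii), $G(P_2')=\sigma\cdot G_2=G_1=G(P_1)$, and the injectivity established above forces $P_1\cong P_2'=P_2$.

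I expect the only delicate point to be the bookkeeping in the identification $(\tfrac1D\Lambda_0)/\Lambda_0\cong\T_D^d$ — in particular, checking that it is compatible with relabeling (so that coordinate permutations on $\T_D^d$ correspond to vertex permutations) and with the $\tfrac1D\Z$-divisibility of barycentric coordinates of lattice points. Once this dictionary is pinned down, together with the observation that passing from affine lattices to their difference lattices turns the affine quotients into honest group quotients, everything else is formal.
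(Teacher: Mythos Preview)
Your proposal is correct and follows essentially the same approach as the paper: both hinge on the observation that a unimodular equivalence preserves barycentric coordinates up to the vertex permutation it induces, so $G(P)$ is invariant up to coordinate permutation. You supply considerably more detail than the paper (which dismisses the ``if'' direction as obvious and gives a two-line argument for ``only if''), in particular making explicit the injectivity and surjectivity of $P\mapsto G(P)$ that the paper leaves implicit.
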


\begin{proof}
The ``if'' part is obvious. 
For the ``only if'' observe that a unimodular equivalence $f: P_1 \to P_2$ between two lattice simplices preserves barycentric coordinates, modulo the permutation of vertices induced by $f$. 
That is, if $(x_0,\dots,x_d)$ are the barycentric coordinates of a point $x$ with respect to $P_1$, then the barycentric coordinates of $f(x)$ with respect to $P_2$ are a permutation of them: the
$i$-th barycentric coordinate of $f(x)$ with respect to $P_2$ equals $x_j$, where $j$ and $i$ are such that the $f$ maps the $j$-th vertex of $P_1$ to the $i$-th vertex of $P_2$.
\end{proof}

This formalism is specially useful if $P$ is a cyclic simplex, that is, if $G(P)$ is a cyclic group. In this case we represent $G(P)$ by giving a generator of it. Every primitive element $(u_0,\dots,u_d) \in \T_D^d$ generates such a cyclic group of order $D$ and, in particular, a cyclic lattice $d$-simplex of order $D$. (Here, we call $(u_0,\dots,u_d)$ primitive if $\gcd(u_0,\dots,u_d)=1$).
Observe that this includes all empty $4$-simplices, since they are all cyclic (Barile et al.~\cite{BBBK09}, see part (4) of Theorem~\ref{thm:intro_previous_results}). Hence, we introduce the following definition:

\begin{definition}
\label{def:quintuples}
Let $P$ be a cyclic lattice $4$-simplex of determinant $D$ and let $u\in \T_D^4$. We say that \emph{the quintuple $u$ generates $P$} if the barycentric coordinates of every element in $\Z^4/\Lambda(P)$ with respect to $P$ are multiples of $u$ (modulo $D$). That is, if the point of $\R^4/\Lambda(P)$ with barycentric coordinates $\frac{1}{D}u$ is a generator for the cyclic group $\Z^4/\Lambda(P)$.
\end{definition}

Observe the the entries of $\frac{1}{D}u$ may not add up to $1$, but they add up to an integer. This is what they need to satisfy in order to represent a point of $\R^4/\Lambda(P)$, since the quotient by $\Lambda(P)$ makes barycentric coordinates be defined only modulo the integers.

Summing up: every primitive element $u\in \T_D^4$ is the quintuple of a cyclic lattice simplex of determinant $D$. Moreover:

\begin{itemize}
\item Two quintuples $u,v\in \T_D^4$ generate equivalent simplices if, and only if, one is obtained from the other one by permutation of entries and/or multiplication by a scalar coprime with $D$.

\item The width of $P$ equals the minimum $k\in \N$ such that there are $\lambda_0,\dots \lambda_4 \in \{0,1,\dots,k\}$ (not all zero) with $\sum_i \lambda_i u_i = 0 \pmod D$.

\item The element $ku$ of $\T_D$ represents a lattice point in $P$ if, and only if, when writing it with all entries in $\{0,\dots,D-1\}$ the sum of entries equals $D$. (Indeed, this means that the lattice point of $\R^d$ whose barycentric coordinates are $\frac{1}{D} ku$ is a convex combination of the vertices of $P$. In order to check whether $P$ is empty one can check that this does not happen for any $k=1,2,\dots, D-1$. 
\end{itemize}

With this in mind, we have the following two algorithms for computing all empty $4$-simplices of determinant $D$, depending on the value of $D$. In both we obtain a list of perhaps-non-empty simplices that contains all the empty ones. We then prune this list via the emptiness test mentioned above:

\subsection*{Algorithm 1 (works if $D$ has less than five prime factors)}
If $D$ has less than five prime factors, then it must have a unimodular facet, by the following lemma:

\begin{lemma}
\label{lemma:coprime_facets}
Let $D_1, \dots,D_5$ be the volumes of the five facets of an empty $4$-simplex $P$.
Then, $\gcd(D_i, D_j)=1$ for every $i,j$. In particular, $D$ is a multiple of $D_1\times\dots\times D_5$.
\end{lemma}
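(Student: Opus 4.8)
The plan is to work entirely within the cyclic/quintuple formalism just introduced, which is available because every empty $4$-simplex is cyclic. Represent $P$ by a primitive quintuple $u=(u_0,\dots,u_4)\in\T_D^4$, so $\sum_l u_l\equiv 0\pmod D$ and $\gcd(u_0,\dots,u_4,D)=1$. The first step is to read off the facet volumes from $u$: I claim that the normalized volume $D_i$ of the facet $F_i$ opposite the $i$-th vertex equals $\gcd(u_i,D)$. To see this, let $\ell_i$ be the $i$-th barycentric coordinate, an affine functional vanishing on $\mathrm{aff}(F_i)$ and equal to $1$ at the opposite vertex. Writing $\ell_i=g_i/h_i$ with $g_i$ the primitive integral functional vanishing on $\mathrm{aff}(F_i)$, the integer $h_i$ is exactly the lattice height of the opposite vertex over $F_i$, whence $\Vol(P)=\Vol(F_i)\,h_i$, i.e.\ $D_i=D/h_i$. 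Since $g_i$ is primitive it is surjective onto $\Z$ on $\Z^4$, so $D\ell_i$ takes on $\Z^4$ exactly the multiples of $D/h_i=D_i$ modulo $D$; as these are also precisely the multiples of $u_i$, the subgroups $\langle u_i\rangle$ and $\langle D_i\rangle$ of $\Z/D$ coincide, giving $D_i=\gcd(u_i,D)$. In particular $D_i\mid D$.

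With this identity in hand, the lemma reduces to the arithmetic claim that for every prime $p\mid D$ at most one of the $u_l$ is divisible by $p$, since $\gcd(D_i,D_j)=\gcd(\gcd(u_i,D),\gcd(u_j,D))=\gcd(u_i,u_j,D)$. I would argue by contradiction, using the emptiness criterion: no $k\in\{1,\dots,D-1\}$ has $\sum_l (ku_l \bmod D)=D$. Suppose $p\mid u_i$ and $p\mid u_j$ for some $i\ne j$, and set $S=\{l:p\nmid u_l\}$. Primitivity forces $S\ne\emptyset$; the congruence $\sum_{l\in S}u_l\equiv\sum_l u_l\equiv 0\pmod p$ rules out $|S|=1$; and $p\mid u_i,u_j$ gives $|S|\le 3$. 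Hence $|S|\in\{2,3\}$. I then examine the multiples $k=(D/p)\,t$ for $t\in\{1,\dots,p-1\}$: here $ku_l\equiv 0\pmod D$ whenever $p\mid u_l$, while for $l\in S$ one has $(ku_l\bmod D)=(D/p)(tu_l\bmod p)$. Therefore $\sum_l(ku_l\bmod D)=(D/p)\,f(t)$, where $f(t):=\sum_{l\in S}(tu_l\bmod p)$ is a positive multiple of $p$. The goal is a value of $t$ with $f(t)=p$: this makes the reduced sum equal to $D$, producing a non-vertex lattice point of $P$ and contradicting emptiness.

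For $|S|=2$, say $S=\{a,b\}$, the relation $u_a\equiv-u_b\pmod p$ gives $(tu_a\bmod p)+(tu_b\bmod p)=p$ for every $t$, so $t=1$ works. For $|S|=3$ the congruence $\sum_{l\in S}u_l\equiv0\pmod p$ forces $p$ odd (for $p=2$ it would read $3\equiv0\pmod 2$), and then each $f(t)$ is a multiple of $p$ with $3\le f(t)\le 3(p-1)<3p$, so $f(t)\in\{p,2p\}$. Applying $(x\bmod p)+((-x)\bmod p)=p$ termwise yields the symmetry $f(t)+f(p-t)=3p$, so one of $f(t),f(p-t)$ equals $p$, and both $t,p-t$ lie in $\{1,\dots,p-1\}$. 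In either case the forbidden lattice point appears, completing the contradiction and proving $\gcd(D_i,D_j)=1$. Since the $D_i$ are pairwise coprime and each divides $D$, their product divides $D$.

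I expect the main obstacle to be conceptual rather than computational: the crux is establishing the identity $D_i=\gcd(u_i,D)$ cleanly via the lattice-height reading of barycentric coordinates, since this is what converts a geometric emptiness statement into a tractable congruence problem. After that, the only genuinely delicate point is the $|S|=3$ case, where the $t\leftrightarrow p-t$ symmetry is what guarantees that $f(t)=p$ is actually attainable.
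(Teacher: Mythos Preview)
Your argument is correct, but it takes a genuinely different route from the paper's.

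The paper argues directly with the group structure: if a prime $p$ divides both $D_i$ and $D_j$, then the subgroups of $\Z^4/\Lambda(P)$ sitting in the hyperplanes of $F_i$ and $F_j$ each contain an element of order $p$; these two elements cannot be proportional, because a common multiple would lie in the affine span of $F_i\cap F_j$, which is a unimodular (empty) triangle. Hence they generate a copy of $\Z_p\times\Z_p$, contradicting cyclicity of $\Z^4/\Lambda(P)$. Emptiness enters only through the unimodularity of the $2$-faces, and cyclicity provides the contradiction.

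Your approach inverts the roles of the two hypotheses. You use cyclicity at the outset, to obtain the quintuple representation and the identity $D_i=\gcd(u_i,D)$, and then derive the contradiction from \emph{emptiness} via the residue-sum criterion, by explicitly exhibiting a multiple $k=(D/p)t$ whose reduced barycentric coordinates sum to $D$. The case analysis on $|S|\in\{2,3\}$ and the $t\leftrightarrow p-t$ symmetry are exactly what is needed; the argument is airtight. What your route buys is that everything stays inside the arithmetic of quintuples already set up for the enumeration, and it makes the non-vertex lattice point completely explicit. What the paper's route buys is brevity and a cleaner conceptual picture: the obstruction is simply the non-existence of a $p$-torsion square inside a cyclic group. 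Your derivation of $D_i=\gcd(u_i,D)$ via the lattice height is correct (the key point being that $D\ell_i=D_ig_i$ with $g_i$ primitive, so the induced homomorphism $\Z^4/\Lambda(P)\to\Z/D\Z$ has image $\langle D_i\rangle=\langle u_i\rangle$); this identity is worth recording in its own right.
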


\begin{proof}
Let $\Lambda(P)\subset \Z^4$ be the lattice generated by the vertices of $P$ (to simplify things, assume one of the vertices to be the origin so that $\Lambda(P)$ is a linear lattice).
If a prime $p$ divides both $D_i$ and $D_j$ then there are lattice points $u$ and $v$ of order $p$ (as elements of $\Z^4/\Lambda(P)$) lying in the hyperplanes of the $i$-th and $j$-th facets $F_i$ and $F_j$ of $P$. These two elements of $\Z^4/\Lambda(P)$ cannot be multiples of one another, or otherwise the one that is a multiple would lie in the affine $2$-plane spanned by $F_i\cap F_j$ (which cannot happen since $F_i\cap F_j$ is unimodular, as is every empty triangle).

This implies that $u$ and $v$ generate a subgroup of $\Z^4/\Lambda(P)$ isomorphic to 
$\Z_p\times \Z_p$, in contradiction with the fact that $\Z^4/\Lambda(P)$ is cyclic.
(See part (4) of Theorem~\ref{thm:intro_previous_results}, taken from Barile et al.~\cite{BBBK09}).
\end{proof}

Then, every simplex of determinant $D$ is equivalent to one of the form 
\[
\Delta(v):=\conv(e_1,e_2,e_3,e_4,v)
\]
for a certain $v=(v_1,v_2,v_3,v_4)\in \Z^4$ with $v_1+v_2+v_3+v_4 = D+1$. (Equivalently, $\Delta(v)$ is generated by the quintuple $(-1,
v_1,v_2,v_3,v_4)$.  Algorithm 1 simply looks at all the quintuples of the form $(-1, v_1,v_2,v_3,v_4)\in \T_D^d$ one by one. Since $v_1,v_2,v_3,v_4\in \Z_D$ and $\sum v_i = 1 \pmod D$ this gives $D^3$ quintuples to test. Via symmetries and other considerations that quickly discard some quintuples corresponding to non-empty simplices the quintuples to be tested can be further reduced.

\subsection*{Algorithm 2 (works if $D$ has more than one prime factor; that is, $D$ is not a prime-power)}
In this case we can decompose $D=ab$ with $a$ and $b$ relatively prime. Observe that each quintuple $u=(u_0,\dots,u_d)$ that represents a cyclic simplex of determinant $D$ can be considered modulo $a$ and modulo $b$ to represent, respectively, a cyclic simplex $P_a$ of determinant $a$ and another $P_b$ of determinant $b$. 
We say that $P_a$ and $P_b$ are the relaxations of $P$ modulo $a$ and $b$, respectively.

We have the following result:

\begin{lemma}
\label{lemma:merge}
Let $P$ be a cyclic simplex of determinant $D=ab$ generated by the quintuple $(u_0,\dots,u_4)\in \T_D^4$. Then, the simplices $P_a$ and $P_b$ of determinants $a$ and $b$ generated by the same quintuple $(u_0,\dots,u_4)$ (via the natural maps $T_D^4\to \T_a^4$ and $\T_D^4 \to \T_b^4$) are empty as well.
\end{lemma}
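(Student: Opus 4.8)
The plan is to show that emptiness of $P$ is inherited by each of its coprime relaxations $P_a$ and $P_b$. The statement to prove is Lemma~\ref{lemma:merge}: if the quintuple $u=(u_0,\dots,u_4)$ generates an empty cyclic simplex $P$ of determinant $D=ab$, then the simplices $P_a$ and $P_b$ generated by the reductions of $u$ modulo $a$ and $b$ are also empty. The natural strategy is contrapositive: suppose one relaxation, say $P_a$, is \emph{not} empty; I will produce a nontrivial interior lattice point of $P$, contradicting emptiness of $P$.

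First I would recall the emptiness criterion stated just before Lemma~\ref{lemma:subgroups}: in the torus encoding, the element $k u$ of $\T_a^4$ represents a lattice point in $P_a$ precisely when, writing its entries in $\{0,\dots,a-1\}$, they sum to exactly $a$; and such a lattice point is a non-vertex interior point when $k u \not\equiv 0$ and the representing convex combination is not a standard basis vector. So if $P_a$ is not empty, there is some $k\in\{1,\dots,a-1\}$ for which the reduction of $k u$ modulo $a$ has entries in $\{0,\dots,a-1\}$ summing to $a$, giving a spurious lattice point. The key step is then to lift this witness from $\T_a^4$ back to $\T_D^4$. Because $\gcd(a,b)=1$, I can use the Chinese Remainder Theorem: choose an integer $m$ with $m\equiv k \pmod a$ and $m\equiv 0 \pmod b$ (concretely $m = k\, b\, (b^{-1}\bmod a)$). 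Then $m u$ reduces to $k u$ modulo $a$ and to $0$ modulo $b$, so $m u$ is a genuine element of $\T_D^4$ whose class modulo $a$ is the offending lattice point of $P_a$ and whose class modulo $b$ is trivial.

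The heart of the argument is verifying that this lifted element $mu$ actually represents a non-vertex lattice point of $P$, i.e.\ that its entries in $\{0,\dots,D-1\}$ sum to exactly $D$. I expect this to be the main obstacle, since reducing modulo $a$ and then re-lifting modulo $D$ does not automatically preserve the ``sum equals the modulus'' condition; the entries modulo $D$ are constrained by their residues modulo both $a$ and $b$, and one must check the total sum lands on $D$ rather than $2D$ or $0$. The way around this is to work barycentrically rather than purely torically: the point of $\R^4/\Lambda(P)$ with barycentric coordinates $\frac1D (mu)$ projects to an interior lattice point of $P_a$ and to a vertex (or the origin) of $P_b$, and one combines the two convex-combination descriptions. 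Since $P$ is the simplex on vertices $e_1,\dots,e_4,v$ and $P_a$, $P_b$ share these same barycentric vertices, an interior point of $P_a$ lifts to a point whose barycentric coordinates are nonnegative and sum to one; carefully tracking fractional parts under the CRT isomorphism $\Z_D \cong \Z_a\times\Z_b$ shows the lifted coordinates remain a convex combination, hence a lattice point strictly inside $P$.

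Concretely, I would argue that an interior lattice point $p_a$ of $P_a$ has barycentric coordinates in $\frac1a\Z$ that are nonnegative and sum to $1$ with all coordinates strictly positive (or at least not a single coordinate equal to $1$). Its preimage under $\T_D^4\to\T_a^4$ that is trivial modulo $b$ corresponds to a lattice point $p$ of $\Z^4/\Lambda(P)$ whose barycentric coordinates, reduced modulo $1$, agree with those of $p_a$ modulo $1$ on the $a$-part and are integral on the $b$-part. Taking the canonical representatives in $[0,1)$, these coordinates are nonnegative, sum to an integer, and since $p_a$ is interior to $P_a$ the sum is forced to be $1$; thus $p$ is a lattice point in the interior of $P$ distinct from the vertices, the desired contradiction. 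The same argument applied with the roles of $a$ and $b$ exchanged handles $P_b$, completing the proof.
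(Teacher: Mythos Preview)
Your contrapositive strategy is sound in spirit, but the specific CRT lift you propose does \emph{not} do what you claim, and your final paragraph does not repair it. You assert that the lift $m$ with $m\equiv k\pmod a$ and $m\equiv 0\pmod b$ produces an element $mu\in\T_D^4$ whose canonical barycentric coordinates in $[0,1)$ coincide with those of the witnessing point $p_a$ of $P_a$. They do not: writing the entries of $mu$ in $\{0,\dots,D-1\}$ one gets $y_i=b z_i$ with $z_i\equiv b^{-1}(ku_i)\pmod a$, so the reduced barycentric coordinates are $\frac{z_i}{a}$, whereas those of $p_a$ are $\frac{w_i}{a}$ with $w_i\equiv ku_i\pmod a$. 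These agree only when $b\equiv 1\pmod a$. Concretely, take $D=15$, $a=5$, $b=3$, $u=(1,1,1,1,11)$ and $k=1$: then $ku\equiv(1,1,1,1,1)\pmod 5$ has entry-sum $5$, but your CRT lift is $m=6$ and $6u\equiv(6,6,6,6,6)\pmod{15}$ has entry-sum $30\ne 15$, so it does \emph{not} represent a lattice point of $P$. Your claim ``since $p_a$ is interior to $P_a$ the sum is forced to be $1$'' is exactly the unjustified step.

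The fix is easy but it is not the CRT lift: take $m=bk$ instead. Then the entries of $(bk)u$ in $\{0,\dots,D-1\}$ are precisely $b\,w_i$ (because $bk\,u_i\equiv b\,w_i\pmod D$ and $b\,w_i\in\{0,\dots,D-1\}$), and their sum is $b\cdot a=D$. This is exactly the lattice point $p_a$ itself, viewed in the finer lattice---which is the paper's one-line argument: passing from $\T_D^4$ to $\T_a^4$ amounts to replacing $\Z^4$ by the \emph{sublattice} $\Lambda'=\Lambda(P)+\Z\cdot(\text{point with barycentric coordinates }\tfrac{1}{a}u)$, and emptiness with respect to $\Z^4$ trivially implies emptiness with respect to any sublattice $\Lambda'\subset\Z^4$. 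Once you see this, no lifting computation is needed at all.
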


\begin{proof}
The geometric meaning of considering the quintuple $(u_0,\dots,u_4)\in T_D^4$ as an element of $T_a^4$ for a divisor $a$ of $D$
 is to coarsen the lattice: we can think of $P_a$ as being the same simplex as $P$, but considered with respect to the lattice generated by the point of barycentric coordinates $\frac{1}{a} u$ instead of $\frac{1}{D} u$.
\end{proof}

Conversely, the fact that $\gcd(a,b)=1$ implies that from any pair of quintuples $(a_0,\dots,a_4)\in \T_a^4$ and $(b_0,\dots, b_4)\in T_b^4$ representing simplices $P_a$ and $P_b$ we can obtain a quintuple in $\T_D^4$ representing a simplex $P$ whose relaxations are $P_a$ and $P_b$. Simply let $u$ be a generator of the cyclic group (of order $D$) obtained as direct sum of the groups of $P_a$ and $P_b$ (of orders $a$ and $b$). For example, but not necessarily, take:
\[
(u_0,\dots,u_4) = b (a_0,\dots,a_4) + a(b_0,\dots,b_4).
\]
(Observe that multiplying the element $(a_0,\dots,a_4)\in \T_a^4$ by $b=D/a$ we naturally obtain an element of $\T_D^4$).

We have then our second algorithm { Algorithm 2} which goes as follows: 
\begin{enumerate}
\item Precompute all empty simplices of volumes $a$ and $b$.
\item For each empty simplices $P_a$ of determinant $a$ and $P_b$ of determinant $b$, consider a quintuple $(a_0,\dots,a_4)$ generating $P_a$ and all the quintuples generating simplices equivalent to $P_b$ (for this, start with one quintuple and compute all the ones obtained from it by permutation and/or multiplication by a scalar coprime with $b$).
\item For each pair of quintuples $(a_0,\dots,a_4)$ and $(b_0,\dots,b_4)$ so obtained consider the simplex of determinant $D$ generated by the quintuple $b(a_0,\dots,a_4) + a(b_0,\dots,b_4)$.
\end{enumerate}

The simplices obtained in step (3) are guaranteed to contain all empty simplices of volume $D$. As in Algorithm 1, some of them may not be empty so an emptiness test is needed to prune the list.

%\paco{donde damos los resultados? habria que enunciar el Teorema 1.5, o decir que los resultados son los de la tabla 1}
\subsection*{Computation times}
We have implemented the above algorithms in {\tt python} and run them in the  Altamira Supercomputer at the Institute of Physics of Cantabria (IFCA-CSIC) for every $D\in \{1,2,\dots, 7600\}$. These computations have proven Theorem~\ref{thm:eunumeration_intro}.

For many values of $D$ (those with two, three, or four prime factors) both algorithms work and we have chosen one of them. Also, for Algorithm 2 there is often several choices of how to split $D$ as a product of two coprime numbers $a$ and $b$.
Experimentally we have found that Algorithm 2 runs much faster if $a$ and $b$ are chosen of about the same size, and in this case it outperforms Algorithm 1. 
This is seen in Figure~\ref{fig:tiempos} where some computation times are plotted for the two algorithms. Blue points in the figure show the time taken for Algorithm 2 to compute all empty $4$-simplices for a given determinant of the form $D=2b$ with $b$ a prime number. Purple points correspond to the same computation for $D=ab$ with both $a$ and $b$ primes bigger than $12$. Green points are prime determinants, where Algorithm 1 needs to be used.
In Algorithm 2 the time to precompute empty simplices of determinants $a$ and $b$ is not taken into account, since we obviously had that already stored from the previous values of $D$. As seen in the figure, about $100\,000$ seconds (that is, about 1 day) computing time was needed in some cases with $D\sim 5000$. The total computation time for the whole set of values of $D$ was about $10\, 000$ hours ($\sim$1 year).

\begin{figure}[htb]
\includegraphics[scale=.5]{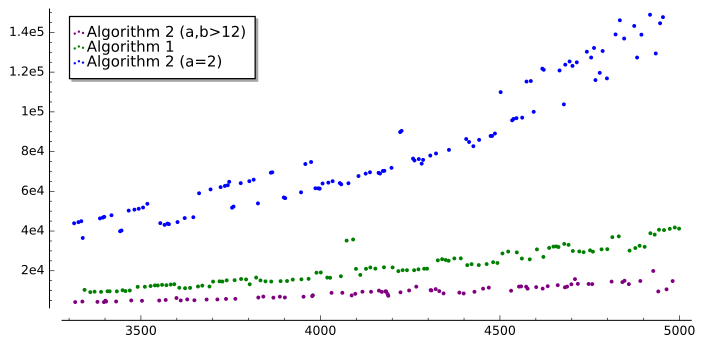}

\caption{Computation times (seconds) for the enumeration of empty $4$-simplices of a given determinant $D$ between $3000$ and $5000$}
\label{fig:tiempos}
\end{figure}

%\clearpage\input{parche.tex}


\begin{thebibliography}{20}


\bibitem{AKN} G. Averkov, J. Kr\"umpelmann and B. Nill,
Largest integral simplices with one interior integral point: Solution of Hensley's conjecture and related results.
\emph{Adv.~Math.} 274 (2015),  118--166.

\bibitem{AKW} G.~Averkov, J.~Kr\"umpelmann and S.~Weltge,
Notions of maximality for integral lattice-free polyhedra: the case of dimension three. 
\emph{Math. Oper. Res.}, 42:4 (2017), 1035--1062.
DOI: \url{http://dx.doi.org/10.1287/moor.2016.0836}


\bibitem{AWW}
G.~{Averkov}, C.~{Wagner} and R.~{Weismantel}.
 Maximal  lattice-free polyhedra: finiteness and an explicit description in dimension three, 
 \emph{Math. Oper. Res.} \textbf{36} (2011), no.~4, 721--742.
  
\bibitem{BBBK09} M. Barile, D. Bernardi, A. Borisov and J.-M. Kantor,
On empty lattice simplices in dimension 4. %prepint 2009. 
\emph{Proc. Am. Math. Soc.} \textbf{139} (2011), no.~12, 4247--4253.


\bibitem{BlancoHaaseHoffmanSantos} 
M.~Blanco, C.~Haase, J.~Hoffman, F.~Santos,
The finiteness threshold width of lattice polytopes, preprint 2016. \url{arXiv:1607.00798v2}

\bibitem{BeckRobins}
M. Beck and S. Robins,
Computing the continuous discretely: Integer-point enumeration in polyhedra.
 \emph{Undergrad. Texts Math. Springer, N.Y.}, 2007.

\bibitem{Bober09}
J.~W.~Bober,
Factorial ratios, hypergeometric series, and a family of step functions, 
\emph{J. Lond. Math. Soc.} (2) \textbf{79} (2009), no.~2,  422--444.

\bibitem{Borisov-class}
A. Borisov. On classification of toric singularities.
\emph{J. Math. Sci., N.Y.}, 94:1 (1999), 1111--1113.
DOI: \url{http://dx.doi.org/10.1007/BF02367251}


\bibitem{Gru93} P. M. Gruber and C. G. Lekkerkerker,
Geometry of Numbers, second ed.,
\emph{North-Holland Mathematical Library}, vol. 37, North-Holland Publishing Co., Amsterdam, 1987.


\bibitem{HZ00} C.~Haase and G.~M.~Ziegler,
On the maximal width of empty lattice simplices. 
\emph{Eur. J. Combin.} 21 (2000), 111--119.

\bibitem{HHH16}
M.~Henk, M.~Henze and M.~A.~Hern\'andez Cifre,
Variations of Minkowski's theorem on successive minima,
\emph{Forum Math.}, 28(2), 2016, 311--326.
DOI: \url{http://doi.org/10.1515/forum-2014-0093}


\bibitem{Hur90} 
C.~A.~J.~Hurkens.
Blowing up convex sets in the plane, 
\emph{Linear Algebra Appl.} 134 (1990), 121--128.

\bibitem{Kan88} R. Kannan and L. Lov\'asz,
Covering minima and lattice-point-free convex bodies, 
\emph{Ann. of Math. (2)} 128 (1988), no. 3, 577--602.

\bibitem{KantorSarkaria}
J.-M.~Kantor and K.~S.~Sarkaria,
 On primitive subdivisions of an elementary tetrahedron,
 {\em Pacific J. Math.}, 211(1):123--155, 2003.

\bibitem{Knudsen} F.~F.~Knudsen,
Construction  of  nice  polyhedral  subdivisions,  
Chapter  3  of \emph{Toroidal Embeddings I} (G. R. Kempf, F. F. Knudsen, D. Mumford, and B. Saint-Donat),
\emph{Lecture Notes in Math.} 339, Springer-Verlag, 1973, pp.~109--164.


\bibitem{MMM88} S.~Mori, D.~R.~Morrison and I.~Morrison,
On four-dimensional terminal quotient singularities.
\emph{Math. Comput.} 51 (1988), no. 184, 769--786.

\bibitem{Perr08} A. Perriello, Lattice-Free simplexes in dimension 4. M.~Sc. Thesis, 
University of Pittsburgh, 2010. Available at \url{http://d-scholarship.pitt.edu/10216/}.

\bibitem{Sankaran90}
G.~K.~Sankaran,
Stable quintiples and terminal quotient singularities,
\emph{Math. Proc. Cambridge Philos. Soc.} \textbf{107} (1990), no.~1, 91--101.


\bibitem{SantosZiegler}
F.~Santos and G.~M. Ziegler,
 Unimodular triangulations of dilated 3-polytopes.
 \emph{Trans.~Moscow Math.~Soc.}, 74 (2013), 293--311. 
 DOI: \url{http://dx.doi.org/10.1090/S0077-1554-2014-00220-X}


\bibitem{Wessels}
U.~Wessels,
\emph{Die S\"atze von White und Mordell \"uber kritische Gitter von Polytopen in den Dimensionen 4 und 5}, 
Diplomarbeit, Ruhr-Universit\"at Bochum, 1989.

\bibitem{White} G. K. White, 
Lattice tetrahedra,
\emph{Canadian J. Math.} 16 (1964), 389--396.

\bibitem{Ziegler} G.~M.~Ziegler, Lectures on Polytopes.
\emph{Grad. Texts in Math.}, Springer, New York, 1995.

\end{thebibliography}
\end{document}